\newtheorem{thm}{Theorem}[section]
\newtheorem{prop}[thm]{Proposition}
\newtheorem{lem}[thm]{Lemma}
\theoremstyle{definition}
\newtheorem{defin}[thm]{Definition}
\newtheorem{rem}[thm]{Remark}
\newtheorem*{thm*}{Theorem}
\newcommand{\weg}[1]{}
  \newcommand{\diag}{\mathrm{diag}}
  \newcommand\bib[1]{\bibitem[#1]{#1}}
  \newcommand{\ms}{\eta}
\newcommand{\sms}{\tilde\ms}
\newcommand{\vol}{{\mathrm{vol}}}
\newcommand{\Ric}{\mathrm{Ric}}
\newcommand{\Rho}{{\mathsf P}}
   \def\CP{{\mathbb{CP}}}
  \def\C{{\mathbb C}}
\def\N{{\mathbb N}}
\def\R{{\mathbb R}}
\def\Z{{\mathbb Z}}
\title[C-projective transformations of complete K\"ahler manifolds]{On the groups of c-projective transformations of complete K\"ahler manifolds}
 \author{Vladimir S.  Matveev and Katharina Neusser}
\address{Institut f\"ur Mathematik,
Fakult\"at f\"ur Mathematik und Informatik,\newline
Friedrich-Schiller-Universit\"at Jena,
07737 Jena, Germany}
\email{vladimir.matveev@uni-jena.de}
\address{Mathematical Institute, Faculty of Mathematics and Physics, \newline Charles University, 186 75 Prague, Czech Republic}
\email{kath.neusser@gmail.com}
\subjclass{32Q15, 32J27, 53A20, 53C24, 22F50, 37J35}
\thanks{The paper was started during the visit of K.N. to Jena supported by DAAD Ostpartnerschaft-Programm and DFG GRK 1523. K.N. was also supported by GACR P201/12/G028.}
\begin{document}

\begin{abstract} 
We show that for any complete connected K\"ahler manifold, the index of the group of complex affine transformations 
in the group of c-projective transformations is at most two unless the K\"ahler manifold is isometric to complex projective space equipped with a positive constant multiple of the Fubini--Study metric. 
This establishes a stronger version of the recently proved Yano--Obata conjecture for complete K\"ahler manifolds.
\end{abstract} 
\maketitle

 \section{Introduction}\label{Introduction}
 Suppose $(M, J)$ is a complex manifold of real dimension $2n\geq 4$ and $g$ a K\"ahler metric on $(M,J)$ with Levi-Civita connection $\nabla$.
 A regular curve $\gamma: I \to M$ defined on some interval $I\subseteq \R$ is said to be \emph{$J$-planar} with respect to $\nabla$ (or $g$), if there exist functions $\alpha,\beta:I\rightarrow\mathbb{R}$ such that 
\begin{equation}
\label{eq:J-planar} 
\nabla_{\gamma'(t)}\gamma'(t)=\alpha \gamma'(t)+\beta J(\gamma'(t))\mbox{ for all } t\in I.
\end{equation} 

It follows from the definition that the property of being $J$-planar for a curve is independent of the parameterisation of the curve and that geodesics of $\nabla$ are $J$-planar curves.   
The $J$-planar curves form however a much larger family of curves than the family of geodesics---at every point and in every direction there exist 
infinitely many geometrically different  $J$-planar curves. Two K\"ahler metrics on $(M,J)$ are called \emph{c-projectively equivalent}, if they have the same $J$-planar curves, and
a \emph{c-projective transformation} of a K\"ahler manifold $(M,J, g)$ is a complex diffeomorphism of $M$ mapping $J$-planar curves to $J$-planar curves.

C-projective equivalence of K\"ahler metrics was first introduced in \cite{OT} and  
provided a prominent research direction in the Japanese and Soviet schools of
differential geometry, see e.g.\,\cite{Mik,S,Y}. Later, it was rediscovered under different names and with different
motivations. In particular, c-projectively equivalent metrics on a given K\"ahler manifold are essentially the same as  \emph{Hamiltonian $2$-forms} \cite{ACG, CMR}, and in dimension $\ge 6$ there are also essentially the
same as \emph{conformal Killing (or twistor) $(1,1)$-forms} (see \cite[App.~A]{ACG} or also
\cite[\S{1.3}]{MR}), and they are closely related 
to the so-called \emph{K\"ahler--Liouville integrable
systems of type $A$}, see e.g.~\cite{KT}. For an overview of the current developments and the renewed interest in c-projective geometry see also \cite{CEMN}.
 
 Given a K\"ahler manifold $(M,J,g)$ we shall write 
\begin{equation*}
 \textrm{Iso}(J, g)\subseteq \textrm{Aff}(J, g)\subseteq \textrm{CProj}(J,g)
 \end{equation*}
 for the groups of complex isometries, of complex affine transformations (complex diffeomorphisms of $M$ preserving the Levi-Civita connection) and of c-projective transformations respectively. 
 
For the Fubini--Study metric $g_{FS}$ on complex projective space $\CP^n$ it is well-known that $\textrm{Iso}(J, g_{FS})=\textrm{Aff}(J, g_{FS})$ and also that $\textrm{Aff}(J,g_{FS})$ is a proper subgroup of $\textrm{CProj}(J,g_{FS})$.
Indeed, the $J$-planar curves on $\CP^n$ (with respect to $g_{FS}$) are precisely those smooth regular
curves that lie within complex lines (see e.g.\,\cite[Ex.1]{MR}) and hence $\textrm{CProj}(J,g_{FS})$ can be identified with the complex projective linear group $\textrm{PGL}(n+1,\C)\cong\textrm{PSL}(n+1,\C)$.
Note that an element in
$\textrm{GL}(n+1,\C)$ induces a complex isometry of $g_{FS}$ if and only if it is proportional to a unitary isomorphism of
$\C^{n+1}$, which shows that $\textrm{Isom}(J, g_{FS})$ can be
identified with the Lie group
$\textrm{PU}(n+1)=\textrm{U}(n+1)/\textrm{U}(1)$ of projective unitary
transformations. Hence, $\textrm{CProj}(J,g_{FS})/\textrm{Aff}(J, g_{FS})$ has  infinitely many elements. In \cite{CEMN} it was however recently shown that $g_{FS}$ (up to multiplications by positive constants and isometries) is 
the only complete K\"ahler metric for which $\textrm{Aff}_0(J, g_{FS})$ is a proper subgroup of
$\textrm{CProj}_0(J,g_{FS})$, where the subscript $0$ denotes the connected components of the identity of the groups. This has answered affirmatively the so-called \emph{Yano--Obata conjecture for complete K\"ahler manifolds}---a metric c-projective analogue of the \emph{projective} and \emph{conformal Lichnerowicz conjectures} (see \cite{M1,M2, KM, BMR} respectively \cite{Fer1, Fer2, Ob,Sch, F}):
 
 \begin{thm}{\cite[Theorem 7.6]{CEMN}}  \label{thm:obata}
Let $(M, g, J)$ be a complete connected K\"ahler manifold of real dimension
$2n\geq 4$. Then, $\emph{Aff}_0(J, g) = \emph{CProj}_0(J, g)$ unless $(M, g,
J)$ has constant positive holomorphic sectional curvature.
\end{thm}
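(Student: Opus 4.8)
The plan is to establish the equivalent assertion: if $(M,g,J)$ is a complete connected K\"ahler manifold of real dimension $2n\ge 4$ for which $\mathrm{Aff}_0(J,g)$ is a proper subgroup of $\mathrm{CProj}_0(J,g)$, then $g$ has constant positive holomorphic sectional curvature. The hypothesis provides a complete c-projective vector field $v$ with $\mathcal L_v\nabla\ne 0$. The first step is to linearise c-projective equivalence: K\"ahler metrics $\bar g$ c-projectively equivalent to $g$ correspond bijectively to the non-degenerate solutions of the \emph{metrizability equation}, a first-order linear overdetermined system on $g$-symmetric, $J$-Hermitian $(1,1)$-tensors $A$ asserting that $\nabla A$ is, up to symmetrisation, governed by the single vector field $\Lambda=\tfrac1{2(n+1)}\grad_g(\mathrm{tr}_g A)$ together with its image $J\Lambda$. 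Prolonging this system once produces a finite-rank vector bundle with a linear connection whose parallel sections are exactly its solutions, so the solution space $\mathrm{Sol}(g)$ is finite-dimensional; it always contains the solution $A_0$ attached to $g$ itself, and $D(g):=\dim\mathrm{Sol}(g)\ge 1$ is the \emph{degree of mobility}. Since $\phi_t^*$ sends c-projectively equivalent metrics to c-projectively equivalent ones, it acts on $\mathrm{Sol}(g)$ by a one-parameter group of linear maps, and the curve $t\mapsto\phi_t^*A_0$ is, by naturality, the solution attached to $\phi_t^*g$. We may assume $g$ is not flat, since a complete flat K\"ahler manifold has $\mathrm{CProj}_0(J,g)=\mathrm{Aff}_0(J,g)$; then $v$, being non-affine, cannot be homothetic (a complete non-flat K\"ahler manifold admits no proper homothety, and isometries are affine), so this curve is not contained in the line $\R\,A_0$, whence $D(g)\ge 2$.

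I would then split into cases according to $D(g)$. If $D(g)\ge 3$, I would invoke the metric cone / standard tractor picture: solutions of the metrizability equation are parallel sections of a tractor-type connection, and a three-dimensional space of them imposes strong restrictions on its curvature and holonomy, so that on a complete manifold only reducible and flat models remain---in analogy with the Gallot--Tanno argument settling the high-mobility case of the projective Lichnerowicz conjecture---and the essential symmetry $v$ excludes all of them except $(\CP^n,g_{FS})$. The substantial case is $D(g)=2$, where $\mathrm{Sol}(g)=\R\,A_0\oplus\R\,A$ for a fixed non-trivial solution $A$, and $\phi_t^*$ acts as a non-trivial one-parameter subgroup of $\mathrm{GL}(2,\R)$ on this plane; analysing that subgroup and, if necessary, replacing $A$ by $A+c\,A_0$ and rescaling, one reaches one of the normal forms $\mathcal L_vA=\kappa A$ with $\kappa\ne 0$, or $\mathcal L_vA=A_0$. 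The objects to control are the eigenvalues $\rho_1\le\dots$ of $A$ relative to $g$: along $J$-planar curves they obey a closed system of ordinary differential equations---the K\"ahler version of the Levi-Civita--Solodovnikov-type normal form for c-projectively equivalent metrics, in which the K\"ahler condition prescribes the multiplicities of the eigenvalues---while the flow of $v$ dilates (respectively translates) them and completeness of $g$ keeps them bounded along every orbit that does not escape to infinity. Reconciling this dilation with completeness, together with an analysis of the set where eigenvalues coincide, forces $(M,g,J)$ to be the Fubini--Study model and hence of constant positive holomorphic sectional curvature.

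The crux, and the main obstacle, is this $D(g)=2$ global argument. The delicate points are: (i) upgrading the local normal form for $A$ and $g$ to a global description of $M$, which requires understanding the stratification of $M$ by the number of distinct eigenvalues of $A$ and controlling $A$ across the critical locus where eigenvalues coincide (the zero set of a naturally associated polynomial), and in particular showing that the open stratum on which $A$ is regular is connected and dense; (ii) turning the essential c-projective vector field into a genuine non-trivial dilation (or translation) of a first integral built from the $\rho_i$ and showing that this cannot coexist with completeness of $g$ unless the one-dimensional ``profile'' differential equation governing $g$ in the distinguished direction is the unique complete one, namely the one realised on $\CP^n$; and (iii) ruling out the borderline incomplete degenerations. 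I would organise (i)--(iii) following the pattern of the proofs of the projective and conformal Lichnerowicz--Obata conjectures, with the K\"ahler structure entering twice: to cut the number of independent eigenvalue functions, so that the profile is effectively one-dimensional, and to pin that profile down to the Fubini--Study one.
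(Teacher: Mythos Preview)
The present paper does not prove this theorem; it is quoted as \cite[Theorem~7.6]{CEMN} and used as background for the discrete strengthening in Theorem~\ref{main_thm}. So there is no in-paper proof to compare your sketch against, only the traces of the \cite{CEMN} argument visible in Section~\ref{Section_Proof_Crucial_Prop}.

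Your outline is architecturally sound and matches that of \cite{CEMN}: pass from $\mathrm{Aff}_0\subsetneq\mathrm{CProj}_0$ to a non-affine c-projective vector field, linearise via the metrisability equation, and split on the degree of mobility. The $D(g)\ge 3$ case is indeed handled by a separate structural result (what the paper cites as Theorem~\ref{deg_at_least_3}, i.e.\ \cite[Corollary~7.3]{CEMN}); your invocation of a ``tractor/Gallot--Tanno'' picture is in the right spirit but does not name that result.

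The genuine gap is in your $D(g)=2$ paragraph. You propose to control the eigenvalues of $A$ via a ``profile ODE'' along $J$-planar curves and then argue that the dilation by the flow of $v$ is incompatible with completeness unless the profile is Fubini--Study. This is a plausible heuristic, but you have not identified the concrete invariant that blows up, nor the mechanism that forces constant holomorphic sectional curvature. The actual engine in \cite{CEMN}---visible here in the proof of Lemma~\ref{inequ_1}---is quite different: one studies the asymptotics of $(\phi_t)^*g$ as $t\to\pm\infty$, uses them to show that components of the \emph{c-projective Weyl tensor} $W$ must vanish in certain directions (because a nonnegative sum \eqref{sum} with exploding coefficients stays bounded), deduces that $(M,J,g)$ has \emph{nullity} on a dense set, and then invokes \cite[Theorem~7.2]{CEMN}, which says that for a complete K\"ahler manifold with nullity and non-constant positive holomorphic sectional curvature every complete c-projectively equivalent metric is affinely equivalent---contradicting the existence of $v$. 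Your sketch never mentions $W$ or nullity, and without them the phrase ``reconciling dilation with completeness forces Fubini--Study'' is an aspiration rather than an argument.
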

In the compact case Theorem \ref{thm:obata} was first proved (using different methods and crucially compactness) in \cite{FKMR, MR} and also generalised to the pseudo-K\"ahler setting in \cite{BMR}.

 Hence, by Theorem \ref{thm:obata}, there are no flows of non-affine c-projective transformations on a connected complete K\"ahler manifold unless it is isometric to
$(\CP^n, J, cg_{FS})$ for some positive constant $c\in \R$. There are nevertheless examples of complete K\"ahler metrics other than positive constant multiples of the Fubini--Study metric for which 
 $\textrm{Aff}(J, g)$ is still a proper subgroup of $\textrm{CProj}(J, g)$. These  examples can be constructed using a similar idea as in \cite[\S 1.3]{M2} and in these examples the index of $\textrm{Aff}(J, g)$ in $\textrm{CProj}(J, g)$ 
 is two. The aim of this paper is to show the following theorem:
 
 \begin{thm}\label{main_thm}
 Suppose $(M, J, g)$ is a connected complete K\"ahler manifold of real dimension $2n\geq 4$ whose holomorphic sectional curvature is not a positive constant. Then, the index 
 of the subgroup $\emph{Aff}(J,g)$ in the group $\emph{CProj}(J, g)$ is at most $2$.
 \end{thm}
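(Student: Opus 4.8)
The plan is to use Theorem~\ref{thm:obata} to eliminate all continuous phenomena, reducing the statement to one about the group of connected components of $\mathrm{CProj}(J,g)$, and then to control that group through the linearisation of c-projective equivalence by an overdetermined linear system of PDEs on which $\mathrm{CProj}(J,g)$ acts linearly. Since by hypothesis the holomorphic sectional curvature of $g$ is not a positive constant, Theorem~\ref{thm:obata} gives $\mathrm{Aff}_0(J,g)=\mathrm{CProj}_0(J,g)$, so $\mathrm{Aff}(J,g)$ is an open, hence closed, subgroup of the Lie group $\mathrm{CProj}(J,g)$ containing its identity component. Consequently
\[
[\,\mathrm{CProj}(J,g):\mathrm{Aff}(J,g)\,]=[\,\pi_0(\mathrm{CProj}(J,g)):\pi_0(\mathrm{Aff}(J,g))\,],
\]
and it suffices to show that, whenever $\mathrm{CProj}(J,g)\neq\mathrm{Aff}(J,g)$, the set $\mathrm{CProj}(J,g)\setminus\mathrm{Aff}(J,g)$ is a single coset of $\mathrm{Aff}(J,g)$; equivalently, to produce a group homomorphism $\chi\colon\mathrm{CProj}(J,g)\to\Z/2\Z$ with $\ker\chi=\mathrm{Aff}(J,g)$.

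C-projective equivalence is governed by a linear overdetermined system (the c-projective metrizability equation, a first BGG operator) depending only on the c-projective class $[\nabla]$; its solution space $\mathcal{A}=\mathcal{A}(J,g)$ is a finite-dimensional vector space of Hermitian-type tensors whose nondegenerate elements correspond to the K\"ahler metrics on $(M,J)$ c-projectively equivalent to $g$, with $A\mapsto cA$ (for $c>0$) corresponding to a homothety of the associated metric. Since $[\nabla]$ is preserved by $\mathrm{CProj}(J,g)$, this group acts linearly on $\mathcal{A}$, and the distinguished nondegenerate solution $A_g$ associated with $g$ satisfies $\phi^{*}A_g=A_{\phi^{*}g}$ for all $\phi\in\mathrm{CProj}(J,g)$. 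Hence $\phi$ preserves the line $\R A_g$ if and only if $\phi^{*}g$ is a positive constant multiple of $g$, and in that case $\phi$ preserves the Levi-Civita connection $\nabla$ of $g$, i.e.\ $\phi\in\mathrm{Aff}(J,g)$. In particular, if the degree of mobility $D:=\dim\mathcal{A}$ equals $1$, every c-projective transformation fixes $\R A_g=\mathcal{A}$ and hence is affine, so the index is $1$.

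Now assume $D\ge2$. If $D\ge3$ the situation is more rigid: the structure theory of complete K\"ahler metrics of higher c-projective mobility (cf.\ \cite{CEMN}; a c-projective analogue of the Beltrami theorem) forces $g$ to have constant holomorphic sectional curvature, which by hypothesis is then non-positive, and for complex space forms of non-positive holomorphic sectional curvature one checks directly that $\mathrm{CProj}(J,g)=\mathrm{Aff}(J,g)$. So suppose $D=2$. Then $\mathbb{P}(\mathcal{A})$ is a circle $\cong\mathbb{RP}^{1}$, on which $\mathrm{CProj}(J,g)$ acts by M\"obius transformations and preserves the subset $S\subseteq\mathbb{P}(\mathcal{A})$ of lines spanned by nondegenerate solutions whose associated K\"ahler metric is \emph{complete}; note $\R A_g\in S$. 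The classification of such two-parameter families over complete manifolds will show that either $S=\{\R A_g\}$ --- in which case $\mathrm{CProj}(J,g)=\mathrm{Aff}(J,g)$ by the previous paragraph --- or the closure $\overline{S}$ is a proper closed arc of $\mathbb{P}(\mathcal{A})$ with two distinct endpoints $\R A_{-},\R A_{+}$, at which the solutions degenerate or the metrics cease to be complete; this gives a canonical two-element subset of $\mathbb{P}(\mathcal{A})$. The induced action on $\{\R A_{-},\R A_{+}\}$ defines a homomorphism $\chi\colon\mathrm{CProj}(J,g)\to\Z/2\Z$ whose kernel contains $\mathrm{CProj}_0(J,g)=\mathrm{Aff}_0(J,g)$; using the structure theory once more one shows $\ker\chi=\mathrm{Aff}(J,g)$, i.e.\ that a c-projective transformation fixing each of $\R A_{\pm}$ preserves $\nabla$, whereas the examples constructed in analogy with \cite[\S 1.3]{M2} exhibit elements of $\mathrm{CProj}(J,g)$ with $\chi=1$. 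Altogether the index is at most $2$.

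The main obstacle is the analysis of the case $D=2$: tracking, for a \emph{complete} K\"ahler metric, the global behaviour of the eigenvalues of the metrizability solutions $A$ along the pencil $\R A_g+\R A_1$ so as to identify the canonical pair $\{\R A_{-},\R A_{+}\}$, and --- the more delicate point --- proving that fixing each of these two lines forces $\phi$ to be affine. The latter amounts to excluding c-projective transformations acting on $\mathbb{P}(\mathcal{A})\cong\mathbb{RP}^{1}$ as nontrivial hyperbolic M\"obius transformations with fixed points $\R A_{\pm}$: iterating such a transformation would push $\R A_g$ arbitrarily close to one of the endpoints $\R A_{\pm}$, and it is exactly the completeness of $g$ that must be invoked to rule this out. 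Without completeness the statement fails, just as Theorem~\ref{thm:obata} does.
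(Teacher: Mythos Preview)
Your overall architecture coincides with the paper's: split by the degree of mobility $D=\dim\mathcal{S}$ and, in the key case $D=2$, show that every non-affine $\phi\in\mathrm{CProj}(J,g)$ acts on $\mathcal{S}$ with negative determinant, so that $\phi\mapsto\mathrm{sgn}\det T_\phi$ furnishes the homomorphism to $\Z/2\Z$ with kernel $\mathrm{Aff}(J,g)$. (The initial appeal to Theorem~\ref{thm:obata} is harmless but unnecessary; the paper never uses it.) Two substantive problems remain.

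For $D\ge3$ your argument is incorrect as written: degree of mobility $\ge3$ on a complete K\"ahler manifold does \emph{not} force constant holomorphic sectional curvature, so your detour through space forms is unavailable. The correct input is Theorem~\ref{deg_at_least_3} (i.e.\ \cite[Corollary~7.3]{CEMN}), which under the standing curvature hypothesis gives $\mathrm{Aff}(J,g)=\mathrm{CProj}(J,g)$ directly.

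For $D=2$ you have accurately located the crux---excluding a non-affine $\phi$ with $\det T_\phi>0$---but you have not proved it, and this is where essentially all of the paper's work lies. The paper first shows (Proposition~\ref{non-aff_c-proj_trans_mobility2}, using the cone of \emph{positive-definite} solutions rather than your arc of \emph{complete} metrics) that such a $T_\phi$ is diagonalisable with two distinct positive eigenvalues $\alpha>\beta>0$. Writing $g^{-1}\vol(g)^{1/(n+1)}=\eta+\tilde\eta$ in the eigenbasis and playing the asymptotics of the eigenvalues of $(\phi^{-k})^*g$ against the c-projective Weyl curvature (Lemma~\ref{inequ_1}) forces the associated $D\in\mathrm{Sol}(J,g)$ into the very restricted ``property~(P)'' of Section~\ref{special_sol}: a single non-constant eigenvalue $\rho\in(0,1)$ of multiplicity~$2$, plus constant eigenvalues $0$ and $1$, together with the relation $\alpha^{m+1}=\beta^{-(\tilde m+1)}$. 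The genuinely new content of the paper is then to show that (P) plus completeness force $M$ to be \emph{compact} (Theorem~\ref{M_compact}): this requires Topalov's quadratic integrals $I_t$ of the geodesic flow, a detailed study of the level-set foliation of $\rho$ (Propositions~\ref{M0M1}--\ref{prop:llast}), and the geometry of the extremal submanifolds $M_0=\{\rho=0\}$ and $M_1=\{\rho=1\}$, which one must first show are nonempty (Lemma~\ref{M_i_non-empty}). Compactness then supplies a convergent subsequence of $\phi^{-k}(x_0)$ along which the Weyl-curvature argument can be rerun to force $W\equiv0$ (Lemma~\ref{identities_not_satisfied}), contradicting $D=2$. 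Your observation that iterating $\phi$ pushes $\R A_g$ toward an endpoint of the arc is only the \emph{beginning} of this chain; it does not by itself yield a contradiction, and the assertion ``using the structure theory once more one shows $\ker\chi=\mathrm{Aff}(J,g)$'' is precisely what Sections~\ref{special_sol} and~\ref{Section_Proof_Crucial_Prop} are devoted to establishing.
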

 
 As we will explain in Section \ref{proof_thm2},  as  a consequence we will also obtain:
 \begin{thm}\label{main_thm2}
 Suppose $(M, J, g)$ is a connected complete K\"ahler manifold of real dimension $2n\geq 4$ whose holomorphic sectional curvature is not a nonnegative constant. If $\emph{Aff}(J,g)\subsetneq \emph{CProj}(J,g)$, then the following statements hold:
  \begin{itemize}
 \item $\emph{Isom}(J,g)=\emph{Aff}(J,g)$
 \item $\emph{Isom}(J,g)$ has index $2$ in  $\emph{CProj}(J,g)$.
  \end{itemize}
 \end{thm}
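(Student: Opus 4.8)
The plan is to derive Theorem~\ref{main_thm2} from Theorem~\ref{main_thm} together with Theorem~\ref{thm:obata}, using the additional curvature hypothesis (nonnegative constant is excluded, not just positive constant) to rule out the flat case as well. First I would observe that since the holomorphic sectional curvature is not a positive constant, Theorem~\ref{main_thm} applies and gives $[\textrm{CProj}(J,g):\textrm{Aff}(J,g)]\leq 2$; combined with the standing assumption $\textrm{Aff}(J,g)\subsetneq\textrm{CProj}(J,g)$, this index is exactly $2$. Hence it remains to establish the first bullet, $\textrm{Isom}(J,g)=\textrm{Aff}(J,g)$, after which the second bullet is immediate: $\textrm{Isom}(J,g)=\textrm{Aff}(J,g)$ has index $2$ in $\textrm{CProj}(J,g)$.

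The core of the argument is therefore to show that on a complete connected K\"ahler manifold whose holomorphic sectional curvature is not a nonnegative constant, every complex affine transformation is an isometry. Here I would invoke the classical decomposition theory: a complete K\"ahler manifold admitting a complex affine transformation that is not an isometry must, by the de~Rham--type splitting for the holonomy/affine group, have a local (and, passing to the universal cover, global) factor that is flat --- more precisely, the existence of a non-isometric affine transformation forces a non-trivial parallel homothetic vector field or a Euclidean-space factor in the de~Rham decomposition of the universal cover. Using completeness and the Cheeger--Gromoll splitting theorem (or its affine analogue), one concludes that $(\tilde M,\tilde g)$ splits off a factor $(\C^k,g_{\mathrm{flat}})$ or has vanishing holomorphic sectional curvature on an open set; but a K\"ahler metric that is flat on an open set and has an affine, non-isometric symmetry, when complete, is forced to be globally flat, i.e.\ holomorphic sectional curvature $\equiv 0$, which is a nonnegative constant --- contradicting the hypothesis. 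I would make this precise by citing the relevant rigidity statement (the affine analogue of the fact that a complete Riemannian manifold with $\textrm{Aff}_0\neq\textrm{Isom}_0$ is a product with a Euclidean factor, due to Kobayashi--Nomizu), and checking it respects the complex structure so that it applies in the K\"ahler category.

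Once $\textrm{Isom}(J,g)=\textrm{Aff}(J,g)$ is in place, I would assemble the statement: under the hypotheses, Theorem~\ref{main_thm} gives index at most $2$, the assumption $\textrm{Aff}\subsetneq\textrm{CProj}$ upgrades this to index exactly $2$, and the identification $\textrm{Isom}=\textrm{Aff}$ transports both conclusions to $\textrm{Isom}(J,g)$, yielding the two bullets. I expect the main obstacle to be the second step --- namely pinning down exactly which structural theorem guarantees that a complete K\"ahler manifold with a non-isometric affine transformation has flat (hence constant nonnegative holomorphic sectional) curvature, and verifying that the relevant splitting or fixed-point argument is compatible with $J$. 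The index bookkeeping in the first and third steps is routine group theory; the curvature rigidity is where the real content lies and where a careful citation (and possibly a short completeness argument to promote a local flat factor to a global one) will be needed.
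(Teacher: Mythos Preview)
Your reduction to showing $\textrm{Isom}(J,g)=\textrm{Aff}(J,g)$ is correct, and the index bookkeeping is fine. The gap is in your proposed proof of that equality. You argue that a non-isometric complex affine transformation forces a Euclidean factor in the de~Rham decomposition of the universal cover, and then claim this makes the manifold globally flat, contradicting the curvature hypothesis. The first step is correct (and is even recalled in the introduction of the paper), but the second is false: a product such as $\C\times N$ with $N$ a non-flat irreducible K\"ahler manifold admits the non-isometric affine map $(z,p)\mapsto(2z,p)$, yet is not flat and does not have constant holomorphic sectional curvature. So the existence of a flat factor does \emph{not} yield ``holomorphic sectional curvature is a nonnegative constant,'' and your contradiction evaporates.

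The paper closes this gap by exploiting the c-projective hypothesis rather than only the affine one. Since $\textrm{Aff}(J,g)\subsetneq\textrm{CProj}(J,g)$ and the curvature is not a positive constant, Theorem~\ref{deg_at_least_3} forces the degree of mobility to be exactly~$2$ (degree $1$ would make every c-projective transformation affine; degree $\geq 3$ would give $\textrm{Aff}=\textrm{CProj}$). With mobility~$2$ and a non-affine c-projective transformation present, Lemma~\ref{Aff=Homotheties} shows that every complex affine transformation is a \emph{homothety} of $g$---a much stronger conclusion than merely ``preserves a flat factor.'' Then the classical result of Ishihara--Obata (on a complete, not locally flat Riemannian manifold every homothety is an isometry) finishes the argument, and it is here, and only here, that the exclusion of the zero-curvature case is used. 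Your de~Rham/splitting route cannot substitute for the mobility-$2$ step; you need to pass through Lemma~\ref{Aff=Homotheties} (or an equivalent) to get from ``affine'' down to ``homothety.''
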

 Let us also remark that the group of complex affine transformations of a complete connected K\"ahler manifold $(M, J, g)$ is well understood: the universal cover of $(M, J, g)$ decomposes according to the de Rham decomposition 
 into a product of K\"ahler manifolds
 $$(M_0, J_0, g_0)\times (M_1, J_1, g_1) \times...\times (M_k, J_k, g_k),$$ where $(M_0, J_0, g_0)$ is complex Euclidean space and $(M_i, J_i, g_i)$ for $1\leq i\leq k$ a complete simply-connected K\"ahler manifold with irreducible holonomy.
 Any complex affine transformation preserves the flat factor $(M_0, J_0, g_0)$ and acts as a complex affine transformation on it, and permutes the factors $M_i$ for $1\leq i\leq k$. Moreover, for $i =1,..., k$  one has that
$\phi|_{M_i}$ is an isometry if $\phi(M_i)=M_i$,  and a homothety (or an isometry) onto its image otherwise, see for example \cite[Chapter IV]{L}.
  
  \subsection{Structure of the proof of Theorem \ref{main_thm} and relation to previous results}\label{structure_section}
 An important role in the proof of Theorem \ref{main_thm} will be played by the metrisability equation \eqref{metrisability_equation} on a K\"ahler manifold, which 
 is a c-projectively invariant linear overdetermined system of PDEs of finite type; we shall recall its definition and key properties in Section \ref{C-prj.-Background}. The set of c-projectively equivalent metrics on a K\"ahler manifold $(M,J,g)$ embeds as an open subset into the vector space $\mathcal S$ of its solutions and the dimension of $\mathcal S$ is 
called the {\it degree of mobility} of  $(M,J,g)$.  
 
 Since for a complete K\"ahler metric $g$ and $\phi\in\textrm{CProj}(J,g)$, the pull-back $\phi^*g$ is a c-projectively equivalent complete
 K\"ahler metric,  \cite[Corollary 7.3]{CEMN} implies:
  \begin{thm}\label{deg_at_least_3} 
 Suppose $(M, J, g)$ is a connected complete K\"ahler manifold of real dimension $2n\geq 4$ whose holomorphic sectional curvature is not a positive constant.
 If the degree of mobility of $(M, J, g)$ is at least $3$, then $\emph{Aff}(J,g)=\emph{CProj}(J, g)$.
 \end{thm}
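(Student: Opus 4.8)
The plan is to obtain the statement as a direct consequence of \cite[Corollary 7.3]{CEMN}, whose hypotheses are designed to match those here. Let $\phi\in\textrm{CProj}(J,g)$ and set $\bar g:=\phi^*g$. Since $\phi$ is a complex diffeomorphism of the complete manifold $M$, the metric $\bar g$ is again K\"ahler for $J$ and complete, and its Levi-Civita connection is $\bar\nabla=\phi^*\nabla$. Unwinding \eqref{eq:J-planar}, a curve is $J$-planar with respect to $\bar\nabla$ exactly when its $\phi$-image is $J$-planar with respect to $\nabla$; as $\phi$ and $\phi^{-1}$ are both c-projective transformations of $g$, the connections $\nabla$ and $\bar\nabla$ have the same $J$-planar curves, so $g$ and $\bar g$ are c-projectively equivalent. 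Note also that $\phi\in\textrm{Aff}(J,g)$ if and only if $\phi^*\nabla=\nabla$, i.e.\ if and only if $\bar\nabla=\nabla$; so it suffices to show $\bar g$ is affinely equivalent to $g$ in the sense that they share their Levi-Civita connection.

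Next I would verify that $\bar g$ inherits the remaining two hypotheses of the theorem. The degree of mobility is the dimension of the solution space $\mathcal S$ of the metrisability equation \eqref{metrisability_equation}, which is c-projectively invariant; hence c-projectively equivalent metrics have the same degree of mobility, and that of $\bar g$ is also at least $3$. Moreover $\phi\colon(M,J,\bar g)\to(M,J,g)$ is by construction a holomorphic isometry, so the holomorphic sectional curvature function of $\bar g$ is the $\phi$-pull-back of that of $g$; in particular it is not a positive constant. Thus $\bar g$ is a complete K\"ahler metric, c-projectively equivalent to $g$, on a complete connected K\"ahler manifold of real dimension $2n\geq4$ of degree of mobility $\geq3$ and without constant positive holomorphic sectional curvature. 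Applying \cite[Corollary 7.3]{CEMN} to the pair $g,\bar g$ forces $\bar\nabla=\nabla$, hence $\phi\in\textrm{Aff}(J,g)$. Since $\phi$ was arbitrary, $\textrm{CProj}(J,g)\subseteq\textrm{Aff}(J,g)$, and the reverse inclusion is immediate, giving $\textrm{Aff}(J,g)=\textrm{CProj}(J,g)$.

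Essentially all the mathematical content is concentrated in the cited Corollary 7.3: the analysis of the eigenvalue structure of the endomorphism canonically associated with a solution of the metrisability equation when the degree of mobility is at least three, combined with the completeness assumption used to rule out the exceptional Fubini--Study case. In the present write-up there is therefore no genuine obstacle; the only points requiring (routine) care are the verification that c-projective equivalence, completeness, the degree-of-mobility bound, and the curvature condition all pass from $g$ to $\phi^*g$, and the translation of ``affinely equivalent'' into the statement that $\phi$ preserves $\nabla$. Were one instead to reprove Corollary 7.3 from scratch, this eigenvalue analysis together with the completeness argument would be the hard part.
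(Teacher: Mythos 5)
Your proposal is correct and follows essentially the same route as the paper, which deduces the theorem directly from \cite[Corollary 7.3]{CEMN} by observing that for $\phi\in\mathrm{CProj}(J,g)$ the pull-back $\phi^*g$ is a complete K\"ahler metric c-projectively equivalent to $g$, hence affinely equivalent to it, so that $\phi$ preserves $\nabla$. Your additional checks that completeness, the degree of mobility and the curvature hypothesis pass to $\phi^*g$ are exactly the routine verifications the paper leaves implicit.
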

 Note moreover that on a K\"ahler manifold $(M,J, g)$ of degree of mobility $1$ any c-projectively equivalent metric is necessarily a nonzero constant multiple of $g$. Hence, in this case any c-projective transformation is necessarily a homothety and one has $\textrm{Aff}(J,g)=\textrm{CProj}(J, g)$. Thus, in view of Theorem \ref{deg_at_least_3},
it remains to prove Theorem \ref{main_thm} under the assumption of degree of mobility $2$.  

Suppose now $(M,J,g)$ satisfies the assumptions of Theorem \ref{main_thm} and is of degree of mobility $2$.  
 Since the metrisability equation is c-projectively invariant, the group of c-projective transformations of $g$ naturally acts on its solution space defining a representation of 
 $\textrm{CProj}(J,g)$ on the $2$-dimensional vector space $\mathcal S$. 
 In Section \ref{Aut_group}, based on a circle of ideas also used in \cite{CEMN, Z}, we will analyse this representation. In Proposition \ref{non-aff_c-proj_trans_mobility2} we show that, if $\phi\in\textrm{CProj}(J,g)\setminus \textrm{Aff}(J,g)$ acts on $\mathcal S$ as an isomorphism with positive determinant, then this isomorphism is necessarily diagonalisable with two distinct positive eigenvalues. 
Our investigations in Sections \ref{special_sol} and \ref{Section_Proof_Crucial_Prop} then show that this however can never be the case, that is, 
$\phi\in\textrm{CProj}(J,g)\setminus \textrm{Aff}(J,g)$ necessarily acts on $\mathcal S$ as an isomorphism with negative determinant. This in turn implies that the index of 
$\textrm{Aff}(J,g)$ in $\textrm{CProj}(J, g)$ is at most $2$ (see Proposition \ref{crucial_prop}), which hence establishes Theorem \ref{main_thm}.
 
Let us give some additional comments on the proof of Theorem \ref{main_thm} respectively Proposition \ref{crucial_prop} and its relation to previous results. Zeghib showed in \cite[Theorem1.3]{Z} that for compact Riemannian manifolds $(M, g)$ other than finite quotients of the standard sphere ($\dim(M)\geq 2$) the group of affine transformations 
$\textrm{Aff}(g)$ has at most index $2\dim(M)$ in the group $\textrm{Proj}(g)$ of projective transformations (diffeomorphisms of $M$ sending  geodesics to geodesics). The first author improved this result in \cite{M3} showing that the index is in fact at most two, and also generalised it in \cite{M4} to complete Riemannian manifolds.  Let us remark that Zeghib also claimed in \cite[\S 1.2]{Z}---without proof though, that one can show analogously to the proof of his projective result \cite[Theorem 1.3]{Z} that on compact K\"ahler manifolds other than complex projective space equipped with (a positive constant multiple of) the Fubini--Study metric the index of 
$\textrm{Aff}(J,g)$ in $\textrm{CProj}(J,g)$ is finite.

The ideas of \cite{Z, M3, M4} are also used in the proof of our Theorem \ref{main_thm}. Let us emphasise that the proof of our Theorem \ref{main_thm} is however not a straightforward application of the methods of \cite{CEMN} and the c-projective analogues of the techniques in \cite{Z,M3,M4}. To establish 
Proposition \ref{crucial_prop} we first proceed similar as in the proof of the Yano--Obata conjecture in \cite[Theorem 7.6]{CEMN}, which allows to reduce our considerations to a very special case (see Lemma \ref{inequ_1}).
To handle this special case however --- a step not needed in the projective case (see \cite{M4,Z}), new arguments are needed, which we develop in Sections \ref{special_sol} and \ref{Section_Proof_Crucial_Prop}. 
\newline\newline
\textbf{Acknowledgements}
\\We would like to thank the referee for helpful comments and suggestions leading to improvements of our article.
\section{C-projective structures}\label{C-prj.-Section}
In this section we recall some background on c-projective structures and review some properties of the geodesic flows of K\"ahler manifolds that 
admit c-projectively equivalent K\"ahler metrics; for details we refer to \cite{CEMN} and the references therein.
 
 \subsection{Notations}\label{C-prj.-Background}
  Suppose $(M, J)$ is a complex manifold of real dimension $2n\geq 4$. When it is convenient, we will use standard abstract index notation for tensors on $(M, J)$.
 To avoid any confusion with the notation in \cite{CEMN}, let us emphasise that in \cite{CEMN} the Greek alphabet is used to index real tensors 
 on $(M,J)$, whereas the Roman alphabet is used to index complex tensors. Since we will in this paper only work inside the real setting, we will use Roman indices for tensors 
 following the usual conventions for tensors on manifolds. 
 
 We call a linear connection $\nabla$ on $TM$ \emph{complex}, if $\nabla J=0$. Two complex linear connections $\widehat \nabla$ and $\nabla$ on $TM$ are called \emph{c-projectively-equivalent}, 
 if they have the same $J$-planar curves (note that \eqref{eq:J-planar} is well-defined for any connection). If both connections are torsion-free this is known to be equivalent \cite{OT} to the existence of a $1$-form $\Upsilon_a\in\Gamma(T^*M)$ such that 
 \begin{equation}\label{cproj_change}
\widehat\nabla_a X^b=
\nabla_a X^b+ \tfrac{1}{2}(\Upsilon_aX^b+\delta_{a}{}^b\Upsilon_cX^c-J_{a}{}^c\Upsilon_c J_{d}{}^bX^d-J_{a}{}^b\Upsilon_c J_d{}^cX^d).
\end{equation}

\begin{defin} A \emph{c-projective structure} on $(M,J)$ is an equivalence class $[\nabla]$ of c-projectively equivalent torsion-free complex linear connections on $TM$.
A \emph{c-projective transformation} of a c-projective manifold $(M, J, [\nabla])$ is a complex diffeomorphism of $M$ preserving $[\nabla]$, equivalently mapping $J$-planar curves to $J$-planar curves.
 \end{defin}
 
 Recall that on a complex manifold the real line bundle $\Lambda^{2n} T^*M$ is canonically oriented and hence one can form an $(n+1)$st positive root of this bundle:
 $$L:= (\Lambda^{2n}T^*M)^{\frac{1}{n+1}}\quad\textrm{ such that }\quad L^{\otimes^{n+1}}=\Lambda^{2n}T^*M.$$ 
 Specifically,  if $\{(U_\alpha, u_{\alpha})\}_{\alpha\in I}$ is an oriented atlas of $M$, then the cocycle of transition functions $U_{\alpha}\cap U_{\beta}\rightarrow \textrm{GL}(1,\R)$ defining 
 $L$ is given by 
\begin{equation*} 
x\mapsto \det \left(T_{u_\beta(x)}(u_\alpha\circ u_{\beta}^{-1})\right)^{-\frac{1}{n+1}}.
 \end{equation*}
 
 Since the line bundles $\Lambda^{2n} T^*M$ and $L$ are canonically oriented, they are trivialisable by the choice of a positive section, but there is no canonical trivialisation 
 of these bundles on a complex or c-projective manifold. In the sequel we also use for $L$ or $L^*$-valued tensors abstract index notation, e.g. $X^a$ may denote simply a vector field 
 or a section of $TM\otimes L$ respectively $TM\otimes L^*$; what is meant will be always clear from the context.
 Let us also remark that for two connections $\widehat\nabla$ and $\nabla$ on $TM$ related as in \eqref{cproj_change}, their induced connections on $L$ are related by
 \begin{equation}\label{cproj_change_densities}
 \widehat\nabla_a\sigma=\nabla_a\sigma-\Upsilon_a\sigma.
 \end{equation}

Moreover, we will denote by $S^2_+T^*M$ the bundle of $J$-Hermitian symmetric covariant tensors on $(M,J)$, that is, $h_{ab}\in\Gamma(S^2_+T^*M)$, if 
  \begin{equation*}
  h_{ab}=h_{(ab)}\quad \textrm{ and }\quad J_{a}{}^cJ_{b}{}^dh_{cd}=h_{ab}.
 \end{equation*} 
 Note that, by definition, any (pseudo-)K\"ahler metric $g_{ab}$ on $(M, J)$ is a non-degenerate section of $S^2_+T^*M$ and as usual we denote its inverse by $g^{ab}\in\Gamma(S^2_+TM)$, which is characterised 
 by $g_{ab} g^{bc}=\delta_{a}{}^c$ (in index-free notation we also write $g^{-1}$ for the inverse of $g$). We denote by $\vol(g)\in\Gamma(\Lambda^{2n}T^*M)$ the volume form 
 of $g$. Given a (pseudo-)K\"ahler metric $g$ on $(M,J)$, one can form the $(n+1)$st root of its volume form $\vol(g)$ (viewed as a positive section of an oriented line bundle), which naturally defines a positive section $\vol(g)^{\frac{1}{n+1}}$ of $L$. Hence, on a (pseudo-)K\"ahler manifold
 $\vol(g)$ respectively $\vol(g)^{\frac{1}{n+1}}$ canonically trivialise $\Lambda^{2n}T^*M$ respectively $L$ and hence sections of both these line bundles can be canonically identified with functions. Note also that any (pseudo-)K\"ahler metric gives rise to a c-projective structure via its Levi-Civita connection.
  
\subsection{Metrisability equation}  
Suppose $(M,J,[\nabla])$ is a c-projective manifold of real dimension $2n\geq 4$. Then the Leibniz rule together with  \eqref{cproj_change} and \eqref{cproj_change_densities}
implies that the trace-free part of $\nabla_a\eta^{bc}$ for any section $\eta^{bc}\in\Gamma(S^2_+TM\otimes L)$ is independent of the choice of connection $\nabla\in[\nabla]$. Hence, in this sense the so-called \emph{metrisability equation}, given by

\begin{equation}\label{metrisability_equation}
 \nabla_a\eta^{bc}-\delta_{a}{}^{(b}X^{c)}- J_a{}^{(b}J_d{}^{c)} X^{d}=0,
\end{equation}
 where $X^a=\frac{1}{n}\nabla_a\eta^{ab}\in\Gamma(TM\otimes L)$, 
 is c-projectively invariant. A detailed analysis of the geometry and algebra of this equation can be found in \cite{CEMN}, we recall here only the properties relevant for this article:

\begin{itemize}
\item Since \eqref{metrisability_equation} is linear, its solution space $\mathcal S$ is a real vector space. 
\item
C-projective invariance implies that the pull-back of any solution of \eqref{metrisability_equation} by a c-projective transformation is again a solution. 
\item
If $(M,J,[\nabla])$ admits a \emph{compatible} K\"ahler metric, that is $[\nabla]$ contains the Levi-Civita connection of a K\"ahler metric $g$ of $(M,J)$, then
the section 
\begin{equation}\label{eta}
\eta^{ab}:=g^{ab}\vol(g)^{\frac{1}{n+1}}:=g^{ab}\otimes\vol(g)^{\frac{1}{n+1}}\in\Gamma(S^2_+TM\otimes L) 
\end{equation}
defines a solution of
 \eqref{metrisability_equation}, since $g$ and consequently $\vol(g)^{\frac{1}{n+1}}$ are parallel for the Levi-Civita connection of $g$. In fact, by \cite[Proposition 4.5]{CEMN}\label{Metri_Prop}, mapping an Hermitian metric $g_{ab}$ (of arbitrary signature) on $(M,J)$ to $\eta^{ab}$ defined by \eqref{eta} restricts to a bijection between compatible (pseudo-)K\"ahler metrics of $(M,J, [\nabla])$ and solutions of \eqref{metrisability_equation} 
 that are non-degenerate (viewed as bundle maps $T^*M\times T^*M\rightarrow L$) at any point of $M$.
 \item Note also that, since $L$ is an oriented line bundle, we have not only a well defined notion for sections  $T^*M\times T^*M\rightarrow L$ of $S^2_+TM\otimes L$, in particular of $\mathcal S$, to be non-degenerate at a point
 but also to be positive-definite, negative-definite and indefinite at a point, since these notions are independent of the choice of a positive trivialising section of $L$. 
 \end{itemize}
 \begin{rem}
Since \eqref{metrisability_equation} is an overdetermined system of PDEs, for a generic c-projective structure, one has $\mathcal S=\{0\}$. In particular, a c-projective structure has generically no compatible (pseudo-)K\"ahler metrics.
 \end{rem}
Suppose now $(M,J,g)$ is a (pseudo-)K\"ahler manifold with Levi-Civita connection $\nabla$ and consider the induced c-projective structure $(M,J,[\nabla])$. Then we can
use $g$ to identify contravariant with covariant tensors and $\vol(g)^{\frac{1}{n+1}}$ to trivialise $L$.
 In particular, any section $\eta^{ab}\in\Gamma(S^2_+TM\otimes L)$ can be identified with a $(J,g)$-Hermitian endomorphism $A$ of $TM$
 given by 
 \begin{equation}
A_{a}{}^b:=\vol(g)^{-\frac{1}{n+1}}\,\eta^{bc}g_{ca}\in\Gamma(\textrm{End}(TM)),
 \end{equation} 
 where $\vol(g)^{-\frac{1}{n+1}}\in\Gamma(L^*)$ is the dual section of $\vol(g)^{\frac{1}{n+1}}\in\Gamma(L)$ (note that $L\otimes L^*$ is the trivial bundle $M\times \R$).
Moreover, solutions of \eqref{metrisability_equation} can be identified with $(J,g)$-Hermitian endomorphism $A\in\Gamma(\textrm{End}(TM))$ that satisfy (with respect to the Levi-Civita connection $\nabla$ of $g$)
 \begin{equation}\label{mobility_eq}
 \nabla_a A_{b}{}^c=\frac{1}{2}( g_{ab}\Lambda^c+\delta_{a}{}^c \Lambda_b+\Omega_{ab}J_{d}{}^c\Lambda^d+J_{a}{}^c\Lambda^d\Omega_{db}) \quad\textrm{ for some } \Lambda^c\in\Gamma(TM),
 \end{equation}
  where $\Omega_{ab}:=J_{a}{}^cg_{cb}$ is the K\"ahler form of $g$ and $\Lambda_a:=\Lambda^bg_{ba}$.
 Note that, if we set $\lambda:= \frac{1}{2} A_{a}{}^a$,  then $\Lambda^a$ equals the gradient $\textrm{gr}(\lambda)$ of $\lambda$ with respect to $g$. 
The equation \eqref{mobility_eq} is precisely the form of the metrisability equation
that was used in \cite{DM, FKMR, S} to study c-projectively equivalent K\"ahler metrics; we will refer to it also as the \emph{mobility equation}.
 
  In summary, $g$ induces an isomorphism of vector spaces
  \begin{equation*}
 \mathcal S\simeq \textrm{Sol}(J,g),
 \end{equation*}
 where 
 \begin{equation*}
 \textrm{Sol}(J,g):=\{A_{a}{}^b\in\Gamma(\textrm{End}(TM)): A \textrm{ is } (J,g)\textrm{-Hermitian and satisfies } \eqref{mobility_eq}\}.
\end{equation*}
 Note that invertible elements in 
$ \textrm{Sol}(J,g)$ correspond to (everywhere) non-degenerate elements in $\mathcal S$ and hence to (pseudo-)K\"ahler metrics that are c-projectively equivalent to $g$. 
Given an invertible solution $A\in \textrm{Sol}(J,g)$ the corresponding c-projectively equivalent metric $\tilde g$ is given by
\begin{equation*}
\tilde g_{ab}=\sqrt{\det{}_\R(A)}\,g_{ac} B_{b}{}^c=|\det{}_{\C}(A)|\,g_{ac} B_{b}{}^c,
\end{equation*}
where $B=A^{-1}$ and $\det_\R(A)$ and $\det_\C(A)$ denote the real and complex determinant of $A$, respectively (note that, since $A$ is $(g,J)$-Hermitian, $\det_\C(A)$ is a real-valued function).
Moreover, $\tilde g$ is evidently affinely equivalent to $g$ (i.e.\,$\nabla$ is also the Levi-Civita connection of $\tilde g$) if and only if $A$ is $\nabla$-parallel.

Since $ \textrm{Sol}(J,g)$ always contains the identity, one can at least locally add to any solution $A\in \textrm{Sol}(J,g)$ always an appropriate multiple of the identity to obtain 
an invertible element of $ \textrm{Sol}(J,g)$. Hence, locally the dimension of $ \textrm{Sol}(J,g)$ coincides with the number of linearly independent compatible (pseudo-)K\"ahler metrics of $(M,J,[\nabla])$.
Further let us remark that the equation \eqref{metrisability_equation} (respectively \eqref{mobility_eq}) is of finite type and prolongation shows that its solutions are in 
bijection to parallel sections of a linear connection on a vector bundle of rank $(n+1)^2$ (see \cite{CEMN,DM}). Hence, the vector space $\mathcal S$ (respectively $ \textrm{Sol}(J,g)$) is of dimension at most $(n+1)^2$. 
As already mentioned in the introduction we call the dimension of $\mathcal S$ (respectively $ \textrm{Sol}(J,g)$) the \emph{degree of mobility} 
of $(M, J,[\nabla])$ (respectively $(M,J,g)$).

For later purpose we also recall the following fact:

\begin{prop}\label{Killing}
Suppose $(M,J,g)$ is a (pseudo-)K\"ahler manifold of dimension $2n\geq4$. Then for any solution $A\in\emph{Sol}(J,g)$ of the mobility equation the corresponding vector field $\Lambda$ is 
holomorphic (i.e.\,its (local) flow\,preserves $J$) and $K:=J\Lambda$ is even a holomorphic Killing vector field with respect to $(J, g)$, which is equivalent to $\nabla\Lambda$ being $(J,g)$-Hermitian.
Moreover, $\nabla\Lambda$ commutes with $A$.
\end{prop}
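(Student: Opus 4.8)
The plan is to split the assertion into three parts: that $\nabla\Lambda$ is $g$-self-adjoint, that it is $(J,g)$-Hermitian, and that it commutes with $A$. The holomorphic Killing property of $K=J\Lambda$ and the asserted equivalence will then follow from elementary linear algebra applied to these three facts.

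First I would record the two facts that need no computation. By the remark preceding the proposition, $\Lambda=\mathrm{gr}(\lambda)$ with $\lambda=\tfrac12A_a{}^a$, so $\nabla_a\Lambda_b=\nabla_a\nabla_b\lambda$ is symmetric; equivalently $\nabla\Lambda$ is $g$-self-adjoint. Next, for \emph{any} symmetric $2$-tensor $h_{ab}$ one checks, using only $J^2=-\mathrm{Id}$ and the symmetry of $h$, that the following are equivalent: (i) $h$ is $(J,g)$-Hermitian; (ii) $h$, viewed as an endomorphism via $g$, commutes with $J$; (iii) $J_a{}^ch_{cb}+J_b{}^ch_{ca}=0$. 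Applying this to $h=\nabla\Lambda$: since $\nabla$ is torsion-free with $\nabla J=0$ one has $\mathcal L_\Lambda J=[\nabla\Lambda,J]$, so (ii) says precisely that $\Lambda$ is holomorphic; and since $\nabla J=0$ gives $\nabla_aK_b=J_b{}^c\nabla_a\Lambda_c$, condition (iii) says precisely that $\nabla_{(a}K_{b)}=0$, i.e.\ that $K$ is Killing; moreover $K=J\Lambda$ is then automatically holomorphic, being $J$ applied to a holomorphic vector field. This establishes the stated equivalence and reduces everything to proving that $\nabla\Lambda$ is $(J,g)$-Hermitian and that $\nabla\Lambda$ commutes with $A$.

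Both of these come from the prolongation of the mobility equation. Applying $\nabla_d$ to \eqref{mobility_eq}, skew-symmetrising in $[da]$ and using $\nabla g=\nabla J=\nabla\Omega=0$, the Ricci identity turns the left-hand side into a curvature term contracted with $A$, while the right-hand side becomes an expression in $g$, $J$, $\Omega$ and the first covariant derivative of $\Lambda$. Here the K\"ahler curvature identities are essential: they force the Ricci tensor, hence the Schouten tensor $\Rho$, to be $(J,g)$-Hermitian and to commute with $J$, and they are exactly what makes the $J$-twisted terms close up. Contracting the skew-symmetrised identity suitably eliminates the full curvature in favour of $\Rho$ and yields the prolongation formula, which expresses $\nabla_a\Lambda_b$ algebraically through $g_{ab}$, $A_b{}^c$, $\Rho$ and one further scalar function (see \cite{CEMN,DM}). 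From the shape of this formula the $(J,g)$-Hermiticity of $\nabla\Lambda$ is immediate, since $g$, $A$ and $\Rho$ are $(J,g)$-Hermitian and commute with $J$; and combining the formula with the algebraic integrability conditions of \eqref{mobility_eq} recorded in \cite{CEMN} --- in particular that $A$ commutes with $\Rho$ --- gives $[\nabla\Lambda,A]=0$.

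The only genuinely computational step, and thus the main obstacle, is the curvature bookkeeping in the third paragraph: one must keep track of the numerous $J$- and $\Omega$-terms produced when differentiating \eqref{mobility_eq}, and one must use the full strength of the K\"ahler --- not merely complex-affine --- curvature symmetries. Everything else, in particular the linear-algebra equivalences of the second paragraph, is formal and uses nothing beyond $\nabla J=\nabla g=0$.
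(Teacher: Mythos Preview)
Your proposal is correct and in fact supplies considerably more detail than the paper's own proof, which consists entirely of citations: \cite[Proposition~5.6]{CEMN}, \cite[Lemma~1]{FKMR}, \cite[Proposition~3]{ACG} for the holomorphic Killing statement and \cite{DM}, \cite[Proposition~5.13]{CEMN}, \cite[Lemma~2.2(7)]{BMR} for the commutation $[\nabla\Lambda,A]=0$. Your outline---symmetry of $\nabla\Lambda$ as the Hessian of $\lambda$, the elementary linear-algebra equivalence between $(J,g)$-Hermiticity of $\nabla\Lambda$ and the holomorphic/Killing properties of $\Lambda$ and $J\Lambda$, and then the prolongation of \eqref{mobility_eq} via the Ricci identity---is exactly the content of those references, so the approaches coincide.

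One small caution on the last step: in the cited sources the commutation $[\nabla\Lambda,A]=0$ is typically derived directly from the curvature identity $[R(X,Y),A]=\,\cdots$ obtained by skew-symmetrising the differentiated mobility equation, together with the Bianchi identity and the K\"ahler curvature symmetries, rather than by first isolating ``$A$ commutes with $\Rho$'' as a separate lemma. The latter is indeed a consequence of the integrability conditions in the K\"ahler setting, but if you intend to write out the argument in full it is cleaner to follow the direct route in \cite[Lemma~2.2]{BMR} or \cite[Proposition~5.13]{CEMN}; otherwise you risk a circularity, since in some presentations the commutation of $A$ with Ricci is itself deduced \emph{from} $[\nabla\Lambda,A]=0$.
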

\begin{proof}
The first statement is well known in c-projective geometry; see e.g.\,\cite[Proposition 5.6]{CEMN}, \cite[Lemma 1]{FKMR} or in the language of Hamiltonian 2-forms \cite[Proposition 3]{ACG}. A proof of the second statement can be found in \cite{DM}, \cite[Proposition 5.13]{CEMN} or also \cite[Lemma 2.2(7)]{BMR}.
\end{proof}
 
\subsection{Metric c-projective structures and integrals for the geodesic flow}\label{C-prj.-integrals}
 Recall that a smooth function $I\colon TM\to \R$ on a (pseudo-)Riemannian manifold $(M,g)$
 is called an \emph{integral of the geodesic
 flow} (or an \emph{integral}) of $g$, if for any affinely parametrised
geodesic $\gamma$ the function $s\mapsto I(\gamma'(s))$ is constant.

 Suppose now $(M,J,g)$ is a (pseudo-)K\"ahler manifold of dimension $2n\geq 4$. 
 In Proposition \ref{Killing} we have already noted that a solution $A\in\textrm{Sol}(J,g)$ 
 of the mobility equation \eqref{mobility_eq} gives rise to a holomorphic Killing vector field $K$ and hence to a linear integral for the geodesic flow of $g$.
 In fact, it is known that all coefficients of the characteristic polynomial of $A$ are generators 
 of holomorphic Killing vector fields and hence give rise to linear integrals of $g$ (see \cite[Proposition 3]{ACG},\cite[Theorem 5.11(1)]{CEMN}); 
 we will however only need Proposition \ref{Killing} in this article.
 
 An essential tool in our article will be that any solution $A\in\textrm{Sol}(J,g)$ of the mobility equation gives also
 rise to a family of quadratic integrals $I_t$ of $g$ as shown by Topalov in \cite{Top}. For $t\in\R$ the integral $I_t$ is given by 
 \begin{equation}\label{Integrals}
 I_t\colon TM\to \R, \quad I_t(X)=g( {\det{}_{\C}(A(t))}A(t)^{-1}X,X),
\end{equation} 
where $A(t):=t\textrm{Id}-A$ and $\det_{\C}(A(t))$ denotes the complex determinant of $A(t)$ (which is here viewed as a complex $n\times n$-matrix). 
Note that $\det_{\C} (A(t))A(t)^{-1}$ is a polynomial of degree $n-1$ in $t$, whose coefficients give rise to integrals of $g$.
Moreover, it was shown in \cite[Theorem 5.18]{CEMN} that on an open dense set the degree of the minimal polynomial of $A$ (i.e. in case $g$ is positive definite, the number of different eigenvalues of $A$) is constant and equals the number of functionally independent integrals in the family $I_t$.
 
 \subsection{C-projective Weyl curvature}\label{curvature}
 Suppose $(M,J)$ is a complex manifold of real dimension $2n\geq 4$. Let $\nabla$ be a complex torsion-free connection 
 $\nabla$ on $TM$ and write
  \begin{equation*}
 R_{ab}{}^{\,c}{}_dX^d:=\nabla_a\nabla_b X^c-\nabla_b\nabla_a X^c.
 \end{equation*}
 for its curvature and $\Ric_{ab}:=R_{ca}{}^{\,c}{}_b$ for its Ricci tensor. Then one may decompose $R_{ab}{}^{\,c}{}_d$ as
 
\begin{equation}\label{Weyl_curv}
R_{ab}{}^{\,c}{}_d=W_{ab}{}^{\,c}{}_d+(\partial \Rho)_{ab}{}^{\,c}{}_d,
\end{equation}
 where 
\begin{equation*}\label{partial}
(\partial\Rho)_{ab}{}^{\,c}{}_d:=
\delta_{[a}{}^{c}\Rho_{b]d}
-J_{[a}{}^c\Rho_{b]e}J_d{}^e
-\Rho_{[ab]}\delta_{d}{}^c
- J_{[a}{}^e\Rho_{b]e}J_d{}^{c}.
\end{equation*}

\begin{equation}\label{RhoTensor}
\Rho_{ab}:=\tfrac{1}{n+1}(\Ric_{ab}+
\tfrac{1}{n-1}(\Ric_{(ab)}-J_{(a}{}^c J_{b)}{}^d
\Ric_{cd})).
\end{equation}

It can be shown (see \cite{OT, CEMN}) that $W_{ab}{}^{\,c}{}_d$ does not depend on the connection in the induced c-projective class $[\nabla]$ of $\nabla$.
Hence, it is an invariant of the c-projective manifold $(M,J,[\nabla])$, called its \emph{c-projective Weyl curvature}.

Recall also that a K\"ahler metric $g$ on $(M,J)$ is said to have \emph{constant holomorphic sectional curvature} $\mu\in\R$, if its curvature (i.e.\,the curvature of its Levi-Civita connection) takes the form
\begin{equation*}
R_{abcd}=\frac{\mu}{4} (g_{ac}g_{bd}-g_{bc}g_{ad}
+\Omega_{ac}\Omega_{bd}
-\Omega_{bc}\Omega_{ad}
+2\Omega_{ab}\Omega_{cd}),
\end{equation*}
where $R_{abcd}:=R_{ab}{}^{\,e}{}_d\,g_{ec}$ and $\Omega_{ab}:=J_a{}^cg_{cb}\in\Gamma(\Lambda^2T^*M)$ is the K\"ahler-form. 

In the following theorem we collect some results which we will need in the sequel: 

\begin{thm} \label{Weyl_fund_invariant}
Suppose $(M,J,g)$ is a K\"ahler manifold of dimension $2n\geq 4$ with Levi-Civita connection $\nabla$. Then one has:
\begin{enumerate}
\item  $W$ vanishes identically if and only if $(M, J, [\nabla])$ is locally c-projectively flat, that is, locally c-projectively equivalent to $(\CP^n, J,[\nabla^{g _{FS}}])$, where $\nabla^{g _{FS}}$ denotes the Levi-Civita connection of the Fubini--Study metric $g_{FS}$.
\item  If $M$ is connected, then $W$ vanishes identically if and only if $(M,J,g)$ has constant holomorphic sectional curvature.
\item  If $(M,J,g)$ is connected, complete and has positive constant holomorphic sectional curvature, then $(M,J,g)$ is simply-connected and isometric to $(\CP^n, J, cg _{FS})$ for some positive constant $c$.
\end{enumerate}
\end{thm}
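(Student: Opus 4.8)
The plan is to treat the three statements in turn, deducing each from standard facts about c-projective structures together with the classical theory of Kähler manifolds of constant holomorphic sectional curvature. For statement (1), the claim is the c-projective analogue of the fact that vanishing of the projective Weyl tensor characterizes projective flatness. The Weyl curvature $W$ being c-projectively invariant, its vanishing is a property of $[\nabla]$ alone. First I would note that vanishing of $W$ together with the second Bianchi identity forces $\Rho_{ab}$ to satisfy a closedness-type condition, so that locally one can arrange (after a c-projective change of connection using \eqref{cproj_change}) that the curvature of a representative connection vanishes entirely; equivalently, one shows that the prolongation connection governing local solutions of the metrisability equation (mentioned after Proposition \ref{Killing}) is flat, hence the c-projective structure has locally a maximal-dimensional family of compatible metrics, which pins it down to the flat model. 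The flat model is $(\CP^n,J,[\nabla^{g_{FS}}])$, since the $J$-planar curves of $g_{FS}$ are exactly the curves lying in complex lines (recalled in the introduction) and this is the unique simply-connected c-projectively flat structure. For this step I would simply cite \cite{OT, CEMN}, where the prolongation and the classification of the flat model are carried out; the content here is bookkeeping rather than new argument.

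For statement (2), suppose $M$ is connected. If $g$ has constant holomorphic sectional curvature $\mu$, then plugging the explicit curvature formula into \eqref{RhoTensor} shows $\Rho_{ab}$ is a constant multiple of $g_{ab}$, and substituting into the decomposition \eqref{Weyl_curv}–\eqref{partial} one checks by a direct (if slightly tedious) tensor computation that $(\partial\Rho)_{ab}{}^c{}_d$ reproduces $R_{ab}{}^c{}_d$ exactly, forcing $W\equiv 0$. Conversely, if $W\equiv 0$ then by statement (1) the structure $(M,J,[\nabla])$ is locally c-projectively equivalent to the flat model; in particular it admits, locally, a compatible metric of constant holomorphic sectional curvature $c$-projectively equivalent to $g$. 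One then invokes the known rigidity result that a Kähler metric which is c-projectively equivalent to a metric of constant holomorphic sectional curvature, on a connected manifold, itself has constant holomorphic sectional curvature — or, more directly, one argues that the set where the holomorphic sectional curvature of $g$ is a given constant is both open (by the local flatness and the structure of solutions of the metrisability equation near a non-degenerate solution) and closed, hence all of $M$ by connectedness. The main subtlety is making the "local implies global" passage rigorous; this is where connectedness is essential and where I would lean on the analyticity of Kähler metrics together with the fact that $W$ is a globally defined tensor that vanishes identically.

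For statement (3), assume in addition that $(M,J,g)$ is complete with constant holomorphic sectional curvature $\mu>0$. This is the classical space-form theorem of Hawley–Igusa (see also \cite{KN}): a complete, connected Kähler manifold of constant holomorphic sectional curvature $\mu>0$ is isometric to $\CP^n$ with a positive constant multiple of the Fubini–Study metric, the constant being fixed by $\mu$, and in particular is simply-connected. The argument is that the universal cover $\widetilde M$ is a simply-connected complete Kähler space form of curvature $\mu>0$, hence isometric to $(\CP^n,J,cg_{FS})$; since $\CP^n$ is simply-connected, the covering $\widetilde M\to M$ is an isomorphism. I would present this step essentially as a citation, since it is entirely classical and not the novel content of the paper. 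The only place any real work is hidden is statement (2), and even there the heavy lifting is done by the references to \cite{OT, CEMN}; accordingly I expect the proof to be short, with the main (minor) obstacle being the verification that the open-closed argument in (2) genuinely uses only connectedness and the already-cited local normal form.
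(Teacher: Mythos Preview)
The paper's own proof is a one-line deferral to the literature: statement (3) is declared ``a standard fact in K\"ahler geometry'' and statements (1) and (2) are referred to \cite{T} and \cite[Theorems 2.16 and 4.2]{CEMN}. Your proposal is far more elaborate than this, and for (1) and (3) your sketches are essentially correct and match what the cited references do (with the caveat that in real dimension $4$ the passage from $W\equiv 0$ to c-projective flatness genuinely uses the K\"ahler hypothesis---see the Remark immediately following the theorem---and your Bianchi argument should acknowledge this).

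There is, however, a genuine gap in your argument for the converse direction of (2). You propose to deduce that $g$ has constant holomorphic sectional curvature from the existence of a locally c-projectively equivalent metric of constant holomorphic sectional curvature, invoking a ``known rigidity result'' to that effect. But that rigidity statement is precisely the c-projective Beltrami theorem, i.e.\ it is equivalent to the implication you are trying to prove (since $W$ is a c-projective invariant, $W\equiv 0$ for $g$ iff $W\equiv 0$ for any c-projectively equivalent metric). So the argument is circular as written. Your fallback open--closed argument is too vague to repair this: it is not clear what makes the set ``where the holomorphic sectional curvature of $g$ is a given constant'' open without already knowing the answer.

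The direct route---which is what \cite[Theorem 4.2]{CEMN} actually does---avoids this entirely. For a K\"ahler metric the Ricci tensor is symmetric and $J$-Hermitian, so \eqref{RhoTensor} gives $\Rho_{ab}=\tfrac{1}{n+1}\Ric_{ab}$. Setting $W=0$ in \eqref{Weyl_curv} then expresses $R_{ab}{}^c{}_d$ purely in terms of $\Rho$ via $\partial\Rho$; comparing this with the algebraic symmetries of K\"ahler curvature (in particular $R_{abcd}=R_{cdab}$) forces $\Rho_{ab}$ to be a function multiple of $g_{ab}$, and then the second Bianchi identity (a Schur-type argument) shows this function is locally constant, hence constant by connectedness. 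This is a purely algebraic-plus-Bianchi computation with no appeal to the flat model, and it is what you should replace your circular step by.
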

\begin{proof}
Statement $(3)$ is a standard fact in K\"ahler geometry and for $(1)$ and $(2)$ see \cite{T} or \cite[Theorems 2.16 and 4.2]{CEMN}.
\end{proof}

\begin{rem}
For a general c-projective manifold (which is not necessarily induced by a K\"ahler metric) statement $(1)$ of Theorem \ref{Weyl_fund_invariant} still holds provided $2n\geq6$.
If $2n=4$, the c-projective Weyl curvature is in general not sufficient to characterise c-projective flatness.
It turns out that in this case the vanishing of $W$ and a part of the c-projective Cotton-York tensor is what characterises c-projectively flat structures, see \cite[Theorem 2.16]{CEMN}. 
\end{rem}

\begin{rem}\label{deg_on_CP}
We already mentioned that on a K\"ahler manifold $(M,J,g)$ of real dimension $2n\geq 4$ solutions of the mobility equation are in bijection to parallel sections of a linear connection on a vector bundle of rank $(n+1)^2$, see e.g.\,\cite[Theorem.\,4.16]{CEMN}. Hence, if $M$ is simply-connected, $\dim(\textrm{Sol}(J,g))=(n+1)^2$ if and only if this connection has vanishing curvature, which in turn can be shown to be the case if and only if the c-projective Weyl curvature vanishes. In particular, in view of Theorem \ref{Weyl_fund_invariant}, on $(\CP^n, J, g _{FS})$ we have $\dim((\textrm{Sol}(J,g_{FS}))=(n+1)^2$.
\end{rem}

\section{K\"ahler manifolds with a very special type of solution of the mobility equation}\label{special_sol}
In this section we study the topology of complete K\"ahler manifolds admitting a solution of the mobility equation 
of a very restrictive type. We show that the existence of such a solution implies that  the manifold is compact. 
This will be a crucial ingredient in the proof of our main Theorem \ref{main_thm}. 

\subsection{Some general facts}
Suppose $(M,J,g)$ is a connected K\"ahler manifold and $A\in\textrm{Sol}(J,g)$ a solution of the mobility equation. Since $g$ is assumed to be positive-definite and $A$ is $(J,g)$-Hermitian, at any point of the manifold 
$A$ is diagonalisable with real eigenvalues and any eigenvalue is of even (real) algebraic multiplicity.

\begin{defin}\label{reg_points}
A point $x\in M$ is called \emph{regular} with respect to $A$, if
\begin{itemize}
\item the number of distinct eigenvalues of $A$ is constant on a neighbourhood of $x$, 
\item for a smooth eigenvalue $\rho$ defined on a neighbourhood of $x$ either $d\rho(x)\neq 0$ or $\rho$ is constant on a neighbourhood of $x$.
\end{itemize}
\end{defin}
We denote the set of regular points by $M_{\textrm{reg}}$. Note that $M_{\textrm{reg}}$ is an open and dense subset of $M$. 
Moreover, the following can be shown (see \cite[Lemma 5.16, Corollary 5.17]{CEMN} or also \cite[Lemma 2.2]{BMR}):
\begin{prop}\label{prop_eigenvalues} 
Suppose $(M,J,g)$ is a connected K\"ahler manifold of dimension $2n\geq 4$ and $A\in\emph{Sol}(J,g)$ a solution of the mobility equation. Then we have:
\begin{enumerate}
\item At any regular point the algebraic (real) multiplicity of any non-constant eigenvalue $\rho$ of $A$ is $2$ and its eigenspace is spanned by its gradient $\emph{gr}(\rho)$ and its skew-gradient 
$J\emph{gr}(\rho)$.
\item If an eigenvalue $\rho$ of $A$ is constant around some regular point $x$ (i.e. $d\rho(x)=0$), then the constant $\rho$ is an eigenvalue at any point of $M$ and its (real) algebraic multiplicity is constant on the set of regular points. \end{enumerate}
\end{prop}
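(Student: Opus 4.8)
The plan is to prove Proposition \ref{prop_eigenvalues} by exploiting the covariant form \eqref{mobility_eq} of the mobility equation together with the already-established facts from Proposition \ref{Killing}. The starting observation is that, since $A$ is $(J,g)$-Hermitian and $g$ is positive definite, $A$ is pointwise diagonalisable with real eigenvalues, each of even real multiplicity (its complex eigenvalues being the eigenvalues of $A$ as a complex endomorphism). Near a regular point $x$ the number of distinct eigenvalues is locally constant, so we get smooth eigenvalue functions $\rho_1,\dots,\rho_m$ and smooth eigendistributions $D_1,\dots,D_m$ (each $J$-invariant, since $A$ commutes with $J$, and mutually $g$-orthogonal). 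The whole argument is local on $M_{\textrm{reg}}$, and the key computational input is to differentiate the relation $A\,v = \rho\, v$ for $v$ in an eigendistribution and feed in \eqref{mobility_eq}.

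For part $(1)$: let $\rho$ be a non-constant smooth eigenvalue near a regular point, so $d\rho$ does not vanish there. First I would show $\textrm{gr}(\rho)$ and $J\textrm{gr}(\rho)$ lie in the $\rho$-eigenspace $\ker(A-\rho\,\textrm{Id})$. To do this, take the trace-type contraction that recovers $\Lambda$: contracting \eqref{mobility_eq} appropriately and using $\lambda = \tfrac12\,\textrm{tr}\,A$, one gets $\Lambda = \textrm{gr}(\lambda)$; more useful is that for each eigenvalue the corresponding ``$\rho$-component'' of $\Lambda$ controls $d\rho$. Concretely, differentiate $\rho$ along an arbitrary direction by writing $\rho = g(A v, v)/g(v,v)$ for a unit eigenvector field $v$ and plug in $\nabla A$ from \eqref{mobility_eq}; the Hermitian-symmetric shape of the right-hand side of \eqref{mobility_eq} forces $d\rho$ to be proportional to $\Lambda^\flat$ contracted against $v$, and a short linear-algebra computation shows that the $\rho$-eigenspace must contain both $\textrm{gr}(\rho)$ and $J\textrm{gr}(\rho)$ — essentially because the only part of the RHS of \eqref{mobility_eq} that can move a vector ``out of'' an eigenspace is built from $g(\cdot,\Lambda)$, $\Omega(\cdot,\Lambda)$, and their $J$-rotations, i.e.\ from the two-plane $\textrm{span}\{\textrm{gr}(\rho),J\textrm{gr}(\rho)\}$. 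Since $\textrm{gr}(\rho)\neq 0$ and $J\textrm{gr}(\rho)$ is $g$-orthogonal to it and also a nonzero element of the same eigenspace, the (real) multiplicity of $\rho$ is at least $2$; combined with evenness and with the ``regular'' hypothesis on how eigenvalues may vary, one rules out multiplicity $>2$ at a regular point (if it were $\ge 4$ one could perturb within the eigenspace to split $\rho$, contradicting local constancy of the number of eigenvalues). Hence the multiplicity is exactly $2$ and the eigenspace equals $\textrm{span}\{\textrm{gr}(\rho),J\textrm{gr}(\rho)\}$.

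For part $(2)$: suppose $\rho$ is an eigenvalue that is constant on a neighbourhood of a regular point $x$, i.e.\ $d\rho(x)=0$. I would first show the constancy propagates: consider the connected set of regular points where $\rho$ remains an eigenvalue with $d\rho=0$; using Proposition \ref{Killing}, $\Lambda$ is holomorphic and $K=J\Lambda$ is Killing, and $\nabla\Lambda$ commutes with $A$, so $\nabla\Lambda$ preserves each eigendistribution of $A$; the flow of $K$ is then by isometries preserving the eigenvalue structure of $A$, and one shows the set $\{d\rho = 0\}\cap\{\rho\text{ is an eigenvalue}\}$ is open (from part $(1)$: a non-constant eigenvalue has $d\rho\neq 0$ on a dense set, so a ``constant'' eigenvalue can only become non-constant through a non-regular locus) and closed, hence all of $M$ by connectedness, after checking $\rho$ extends continuously across $M_{\textrm{reg}}$ and the non-regular set has empty interior. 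Finally, the constancy of the real algebraic multiplicity on $M_{\textrm{reg}}$ follows because the multiplicity of a locally constant eigenvalue is a locally constant integer-valued function on the connected set $M_{\textrm{reg}}$.

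The main obstacle I anticipate is the eigenspace identification in part $(1)$: getting, from the specific tensorial shape of the right-hand side of \eqref{mobility_eq}, that $\nabla_v A$ maps the $\rho$-eigenspace into itself modulo exactly the two-plane $\textrm{span}\{\textrm{gr}(\rho),J\textrm{gr}(\rho)\}$, and then extracting the multiplicity-$2$ conclusion cleanly. This is the delicate index computation; everything else (propagation of constancy, local constancy of multiplicity) is soft once part $(1)$ is in hand. In the write-up I would of course simply cite \cite[Lemma 5.16, Corollary 5.17]{CEMN} (or \cite[Lemma 2.2]{BMR}) rather than reproduce this, since the statement is recorded there.
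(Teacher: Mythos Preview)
The paper does not actually prove Proposition~\ref{prop_eigenvalues}: it simply states the result and refers to \cite[Lemma 5.16, Corollary 5.17]{CEMN} and \cite[Lemma 2.2]{BMR}. Since your final sentence says you would do exactly the same thing and cite exactly the same sources, your proposal matches the paper's treatment.

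That said, the sketch you give before that citation contains a genuine gap in part~(1). Your argument that the multiplicity cannot exceed~$2$ --- ``if it were $\ge 4$ one could perturb within the eigenspace to split $\rho$, contradicting local constancy of the number of eigenvalues'' --- is not valid. The regularity condition in Definition~\ref{reg_points} concerns the behaviour of the \emph{fixed} endomorphism $A$ as the \emph{point} of $M$ varies; nothing is being perturbed, and there is no mechanism by which a multiplicity-$4$ non-constant eigenvalue would be forced to split at nearby points. The actual argument (as in \cite{CEMN,BMR,ACG}) is a direct computation: differentiating $(A-\rho\,\textrm{Id})v=0$ and inserting \eqref{mobility_eq} yields an explicit formula for $\textrm{gr}(\rho)$ as (a multiple of) the $g$-orthogonal projection of $\Lambda$ onto the $\rho$-eigenspace, and simultaneously shows that any $\rho$-eigenvector orthogonal to both $\Lambda$ and $J\Lambda$ forces $d\rho=0$ in the corresponding directions; combining these, if $d\rho\neq 0$ the eigenspace must be exactly the $J$-invariant $2$-plane spanned by $\textrm{gr}(\rho)$ and $J\textrm{gr}(\rho)$.

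A smaller point on part~(2): you invoke ``the connected set $M_{\textrm{reg}}$'' to pass from locally constant multiplicity to globally constant multiplicity, but for a general $A\in\textrm{Sol}(J,g)$ the set $M_{\textrm{reg}}$ is only asserted to be open and dense. (In the paper's later application, Section~\ref{special_sol}, $M_{\textrm{reg}}$ happens to be connected for a specific reason, but that is not available here.) The references you cite handle this by showing directly that the constant $\rho$ is a root of the characteristic polynomial of $A$ everywhere on $M$, and then identifying its multiplicity on $M_{\textrm{reg}}$ via the structure of the eigendistributions rather than a bare connectedness argument.
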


\subsection{Solutions of the mobility equation of special type}
 
Let $(M,J,g)$ be a connected K\"ahler manifold of dimension $2n\geq 4$ and $A\in\textrm{Sol}(J,g)$ a solution of the mobility equation with the following property:
\begin{enumerate}
\item[{(P)}] it  
has locally around any regular point the following structure of eigenvalues: 
\begin{itemize}
\item two constant eigenvalues $1$ and $0$ of multiplicity $2m$ and $2\tilde m$ respectively,
\item one non-constant eigenvalue $\rho$ with values in $(0,1)$ of multiplicity $2$,
\end{itemize}
where $m,\tilde m\in \Z_{\geq0}$ are arbitrary such that $n-m-\tilde m=1$.
\end{enumerate}

Note that assumption $\textrm{(P)}$ implies that $\rho=\lambda-m$, where $\lambda=\frac{1}{2} A_{a}{}^a$. Since $\lambda$ defines a smooth function on all of $M$, we can also   extend $\rho$ from a smooth function defined on the set of regular points to a smooth function defined on the whole manifold $M$ by dint of this equality. We set 

$$M_0:=\{x\in M: \rho(x)=0\}\,\, \textrm{ and }\,\, M_1:=\{x\in M: \rho(x)=1\}.$$
Since $\rho$ has extrema at points of $M_0$ and $M_1$, we have

$$M_0\subseteq N\quad\quad\textrm{ and }\quad\quad M_1\subseteq N,$$
where $N$ is the zero set of the gradient vector field $\Lambda:=\textrm{gr}(\lambda)=\textrm{gr}(\rho)\neq0$. Since $N$ also coincides with the zero set of $J\Lambda$, which is a (holomorphic) Killing vector field by Proposition \ref{Killing}, 
$N$ is a union of closed connected totally geodesic submanifolds, each of which has even dimension at most $2n-2$.

\begin{rem}
To avoid any ambiguity, let us remark that \emph{closed submanifold} of $M$ here and everywhere else in this article, in particular in Propositions \ref{M0M1} and \ref{foliation} below, means that it is a closed subset of $M$, not that it is compact.
\end{rem}

For later purposes, note that for any point $x\in M$ we can find a basis of $T_xM$, which we call \emph{adapted},
in which $g_x=\textrm{Id}=\textrm{diag}(1,...,1)$ and $A_x$ and $J_x$ have the following block-diagonal form:

\begin{equation}\label{diagonal-1}
A_x= \left(\begin{array}{ccc}
 \rho(x) \textrm{Id}_{2}&&\\
& \textrm{Id}_{2m}&\\
&& 0_{2\tilde m}
\end{array}\right) \quad \textrm{and} \quad J_x=  \left(\begin{array}{ccc} \begin{matrix}0  &-1 \\ 1 & 0\end{matrix}&&\\&\ddots&\\&& \begin{matrix}0  &-1 \\ 1 & 0\end{matrix}\end{array}\right).
\end{equation}

For a vector $\xi\in T_xM$ 
we shall denote by $\xi_i$ the coordinates 
with respect to a chosen adapted basis of $T_xM$.  
In such coordinates, the family of quadratic integral $I_t$ \eqref{Integrals}  induced by $A$ has the form:
\begin{equation}\label{INT}
 I_t( \xi)= (t-1)^{\alpha(m)(m-1)}t^{\alpha(\tilde m)(\tilde m-1)} \tilde I_t( \xi),
  \end{equation}
where  
\begin{equation*}
 \tilde I_t( \xi)= (t-1)^{\alpha(m)}t^{\alpha(\tilde m)}(\xi_1^2 + \xi_2^2)+(t-\rho)t^{\alpha(\tilde m)}(\xi_3^2+...+\xi_{2m+2}^2)+(t-\rho)(t-1)^{\alpha(m)}(\xi_{2m+3}^2+...+\xi_{2n}^2),
 \end{equation*} 
and $\alpha(\ell)$ equals $1$ for $\ell\geq 1$ and $0$ for $\ell =0$.
Note that for fixed $t$ the coefficient  
$(t-1)^{\alpha(m)(m-1)}t^{\alpha(\tilde m)(\tilde m-1)}$ is a constant and hence 
$\tilde I_t( \xi)$ also forms a family of integrals for the geodesic flow of $g$. 
\\\\
The goal of this section is to prove:

\begin{thm} \label{M_compact}
Suppose $(M, J, g)$ is a connected complete K\"ahler manifold of dimension $2n\geq 4$ and let $A\in\emph{Sol}(J,g)$ be a solution of the mobility equation that satisfies $\emph{(P)}$. If $M_0$ and $M_1$ are both not empty, then $M$ is compact.
\end{thm}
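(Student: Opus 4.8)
The plan is to prove that $M$ has finite diameter; since $(M,g)$ is complete, the Hopf--Rinow theorem then forces $M$ to be compact. Throughout, $\rho$ is regarded as a function $M\to[0,1]$ (it takes values in $[0,1]$ since it does so on the dense set $M_{\textrm{reg}}$ by $\textrm{(P)}$), with $M_0=\rho^{-1}(0)$, $M_1=\rho^{-1}(1)$ both nonempty by hypothesis, so that $\rho$ attains the values $0$ and $1$. Recall from the discussion preceding the statement that the critical set of $\rho$ is $N=\{\Lambda=0\}$, that $M_0\cup M_1\subseteq N$, and that $N$ is a disjoint union of closed totally geodesic submanifolds.

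\emph{An ODE for $\rho$ along radial geodesics.} Fix $x\in M_{\textrm{reg}}$ with $\rho(x)\in(0,1)$, so that $\textrm{gr}(\rho)(x)\neq0$ spans, together with $J\textrm{gr}(\rho)(x)$, the $\rho(x)$-eigenspace of $A$ (Proposition \ref{prop_eigenvalues}). Let $\gamma$ be the unit-speed geodesic with $\gamma(0)=x$ and $\gamma'(0)=\pm\textrm{gr}(\rho)(x)/|\textrm{gr}(\rho)(x)|$; call such geodesics \emph{radial}. Writing $\gamma'(s)$ in an adapted basis \eqref{diagonal-1} and letting $v_1,v_2,v_3$ be the squared norms of its components along the $\rho$-, $1$- and $0$-eigenspaces of $A$, constancy of the integrals $\tilde I_t$ of \eqref{INT} for all $t$ forces $v_1+v_2+v_3$, $\rho v_3$ and $(1-\rho)v_2$ to be constant along $\gamma$, and constancy of the linear integral $\xi\mapsto g(J\Lambda,\xi)$ of the holomorphic Killing field $K=J\Lambda$ (Proposition \ref{Killing}) gives a further constant. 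For a radial geodesic all of these constants vanish except $v_1\equiv1$, and then $\dot\rho^2=|\textrm{gr}(\rho)|^2$ along all of $\gamma$ (equivalently, a radial geodesic is a reparametrised gradient flow line of $\rho$). Moreover $|\textrm{gr}(\rho)|^2$ is a function $\Theta(\rho)$ of $\rho$ alone --- a standard fact for c-projectively equivalent K\"ahler metrics, also readable off the prolongation of \eqref{mobility_eq} via Proposition \ref{Killing}, the point being that $A$ has only the single non-constant eigenvalue $\rho$. The function $\Theta$ is continuous on $[0,1]$, vanishes at $0$ and $1$, and is positive on $(0,1)$: at a point of a would-be level set $\{\rho=c\}$, $c\in(0,1)$, with $\textrm{gr}(\rho)=0$ one gets a zero of $K$, and the linear growth of a Killing field off its zero set forces $\Theta=|\textrm{gr}(\rho)|^2$ to vanish exactly to first order in $\rho-c$, contradicting $\Theta\geq0$; the same growth argument at $M_0$ and $M_1$ shows $\Theta$ has first-order zeros at $\rho=0,1$. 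Hence $N=M_0\sqcup M_1$, $\rho$ is Morse--Bott with the only critical values $0$ and $1$, and along a radial geodesic $\rho$ obeys the autonomous equation $\dot\rho^2=\Theta(\rho)$. Since $\Theta^{-1/2}$ is integrable on $[0,1]$, a radial geodesic reaches both $M_0$ and $M_1$ within parameter length $T:=\int_0^1\Theta(\rho)^{-1/2}\,d\rho<\infty$ and is, in $\rho$, periodic with half-period $T$ (bouncing between $M_0$ and $M_1$). By density of $M_{\textrm{reg}}$ and continuity of $d(\cdot,M_0)$ it follows that $d(x,M_0)\leq T$ and $d(x,M_1)\leq T$ for \emph{every} $x\in M$; the same ODE governs the geodesics issuing from $M_0$ in directions along the $\rho$-block of the normal space, which reach $M_1$ within length $T$.

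\emph{Reduction to compactness of $M_0$, and conclusion.} Granting that $M_0$ is compact, the set $\{x\in M:d(x,M_0)\leq T\}$ is closed and has diameter at most $\textrm{diam}(M_0)+2T<\infty$, hence is compact since $(M,g)$ is complete; by the previous paragraph it is all of $M$, so $M$ is compact. To obtain compactness of $M_0$ one studies $M$ transverse to $M_0$: the rank-two distribution $D=\textrm{span}(\textrm{gr}(\rho),J\textrm{gr}(\rho))$ on $M\setminus N$ has totally geodesic leaves, and combining this with completeness and the ODE $\dot\rho^2=\Theta(\rho)$ one shows each leaf closes up to an embedded totally geodesic $2$-sphere on which $\rho$ runs monotonically from $0$ to $1$ and back, meeting $M_0$ and $M_1$ in exactly one point each. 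Consequently the radial geodesics issuing from a fixed $x_0\in M_0$ in the (complex one-dimensional) $\rho$-block of $T_{x_0}M$, run for time $2T$, sweep out a compact set returning to $x_0$; letting $x_0$ range over $M_0$ exhibits $M$ as the image of a compact disc bundle over $M_0$, so that $M$ is compact precisely when $M_0$ is, and one finishes by induction on $n$, the restricted configuration on the totally geodesic $M_0$ being of strictly smaller dimension and the base cases ($m=0$ or $\tilde m=0$, so $M_0$ or $M_1$ a single point) being immediate. Equivalently, one may argue that a regular level set $\rho^{-1}(c)$ is compact: it is closed, carries the nowhere-zero Killing field $K$, and is transverse to the above $2$-spheres, hence fibres over their (compact) leaf space.

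The step I expect to be the main obstacle is precisely the compactness of $M_0$ (equivalently, of a regular level set of $\rho$): a priori $M_0$ is only a complete totally geodesic submanifold, and $\textrm{(P)}$ does not visibly restrict to it as a solution carrying a non-constant eigenvalue, so one must use the global geometry --- in particular that $\rho$ also attains the value $1$, which is where the hypothesis $M_1\neq\emptyset$ genuinely enters, together with the ``bouncing'' periodicity of the radial geodesics --- to pin down the normal disc-bundle structure along $M_0$ and run the induction. The other ingredients (the integral-derived ODE, the fact that $|\textrm{gr}(\rho)|^2$ is a function of $\rho$, and Hopf--Rinow) are comparatively routine.
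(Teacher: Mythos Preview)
Your analysis of the radial geodesics is sound and essentially recovers the paper's Propositions \ref{geodesics1}--\ref{prop:llast}: the ODE $\dot\rho^2=\Theta(\rho)$ with $\Theta=g(\Lambda,\Lambda)$ depending only on $\rho$, the integrability of $\Theta^{-1/2}$, and the conclusion that every point lies within distance $T$ of $M_0$. The reduction ``$M$ compact $\Leftrightarrow$ $M_0$ compact'' is also correct.

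The genuine gap is exactly where you flag it: you do not prove that $M_0$ is compact. Your induction cannot work, because on the totally geodesic $M_0$ the tangent bundle is the $1$-eigenspace of $A$, so the restriction of $A$ to $M_0$ is the identity and property (P) is lost; there is no smaller instance to induct on. Your alternative via regular level sets is circular: a regular level set $\rho^{-1}(c)$ fibres (by your $2$-spheres, or by the flow of $\Lambda$) over something diffeomorphic to $M_0$ itself, so its compactness is equivalent to that of $M_0$, which is what you are trying to prove. The hypothesis $M_1\neq\emptyset$ has not yet done any real work.

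The paper closes this gap with one extra idea: fix a \emph{single} point $x_0\in M_0$, take $x_1\in M_1$ realising $\mathrm{dist}(x_0,M_1)$, and use your radial-geodesic analysis (their product structure $M_{\mathrm{reg}}\cong L\times(0,1)$ from Proposition \ref{prop:llast}) along the minimising geodesic $\gamma$ from $x_0$ to $x_1$. This shows that \emph{every} unit-speed geodesic leaving $x_0$ orthogonally to $M_0$ reaches $M_1$ after the same time $T$. Conversely, for any $y_1\in M_1$ a minimising geodesic from $x_0$ to $y_1$ is orthogonal to $M_0$ at $x_0$ (Proposition \ref{geodesics1}(4), which you also have), hence is one of these radial geodesics. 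Therefore
\[
M_1=\exp_{x_0}\bigl(\{\xi\in T_{x_0}^{\perp}M_0:\ |\xi|=T\}\bigr),
\]
the image of a \emph{sphere} in a single tangent space, which is compact regardless of whether $M_0$ is. Once $M_1$ is compact, $M=\exp(SM_1)$ for the closed unit normal disc bundle $SM_1$, and compactness of $M$ follows. The asymmetry between $M_0$ and $M_1$ here is only apparent: the point is to pass through a single normal sphere rather than through all of $M_0$.
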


The proof will be based on several propositions:

\begin{prop} \label{M0M1}
Suppose $(M, J, g)$ is a connected K\"ahler manifold of dimension $2n\geq 4$ and $A\in \emph{Sol}(J,g)$ satisfies property $\emph{(P)}$. 
Then one of the following statements holds for $M_0$ (respectively $M_1$):
\begin{itemize}
\item $M_0$ (resp.\,$M_1$) is empty, or if not,
\item $M_0$ (resp.\,$M_1$) is a discrete subset of $M$ provided $m=0$ (resp.\,$\tilde m=0$) and
a closed totally geodesic K\"ahler submanifold of dimension $2m$  (resp.\,$2\tilde m$) provided $m\geq 1$ (resp.\,$\tilde m\geq 1$) whose tangent space at any point is the eigenspace of $A$ 
with eigenvalue $1$ (resp.\,$0$). In particular, if $M$ is complete, then $M_0$ (resp.\,$M_1$) is complete.
\end{itemize}
\end{prop}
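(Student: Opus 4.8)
The plan is to analyse the set $M_0$ (the argument for $M_1$ is identical after swapping the roles of the eigenvalues $0$ and $1$, which corresponds to replacing $A$ by $\mathrm{Id}-A\in\mathrm{Sol}(J,g)$ and $\rho$ by $1-\rho$). First I would observe that on $M_{\textrm{reg}}$ the constant eigenvalue $1$ has multiplicity $2m$ by assumption $\textrm{(P)}$, and by Proposition \ref{prop_eigenvalues}(2) it is then an eigenvalue of $A$ at \emph{every} point of $M$ with constant multiplicity $2m$ on $M_{\textrm{reg}}$ (similarly $0$). The point is that near a point $x\in M_0$ the non-constant eigenvalue $\rho$ tends to $0$, so that the eigenspaces for $\rho$ and for $0$ collide; thus at $x$ the eigenvalue $0$ has multiplicity $2\tilde m+2$ while $1$ still has multiplicity $2m$. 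The case $m=0$ is special: then there is no eigenvalue $1$ to speak of, $A=\mathrm{Id}-(\mathrm{Id}-A)$, and $M_0$ is the maximum set of $\rho$; since $J\Lambda=J\,\textrm{gr}(\rho)$ is a Killing field vanishing on $M_0$, the set $M_0$ is a union of closed totally geodesic submanifolds of even dimension, and one shows the dimension is zero (discreteness) by a local computation — see below.

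The heart of the matter when $m\geq1$ is to identify $M_0$ as the eigenspace distribution $D_1$ of $A$ with eigenvalue $1$ (which has rank $2m$ on $M$, and is $J$-invariant since $A$ is $(J,g)$-Hermitian). I would proceed as follows. On $M_{\textrm{reg}}\setminus(M_0\cup M_1)$ one has the orthogonal $\nabla$-invariant splitting of $TM$ into the $\rho$-eigenspace (spanned by $\Lambda$ and $J\Lambda$ by Proposition \ref{prop_eigenvalues}(1)), the $1$-eigenspace $D_1$, and the $0$-eigenspace $D_0$; standard facts on c-projectively equivalent metrics (equivalently Hamiltonian $2$-forms, cf.\,\cite{ACG,CEMN}) show that $D_1$ and $D_0$ are each integrable with totally geodesic leaves on which the induced metrics are again K\"ahler, and that the leaves of $D_1$ depend ``holomorphically'' on the transverse parameter. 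The zero level $M_0=\{\rho=0\}$ is contained in the Killing zero set $N$, hence is a disjoint union of closed connected totally geodesic even-dimensional submanifolds. To pin down its tangent space, pick $x\in M_0$; since $d\rho(x)=0$ we may pass to a normal neighbourhood and use the adapted frame \eqref{diagonal-1}, in which near $x$ the endomorphism $A-\mathrm{Id}\cdot 0$ degenerates precisely along the block where $A_x$ has eigenvalue $1$, i.e.\ $T_xM_0$ must lie in $\ker(A_x)$... more precisely, I would argue that $M_0$, being totally geodesic and $\rho$ being constant $=0$ on it while $\mathrm{gr}(\rho)=\Lambda$ vanishes on it and $\nabla\Lambda$ commutes with $A$ (Proposition \ref{Killing}), forces $T_xM_0$ to be an $A_x$-invariant subspace on which $A_x$ has no $\rho$- or $0$-excess; comparing dimensions with the multiplicity count above (the $0$-eigenspace at $x$ has dimension $2\tilde m+2$, of which a $2$-dimensional piece is ``used up'' by the limiting $\rho$-direction $\mathrm{span}\{\Lambda,J\Lambda\}$, which however vanishes at $x$) gives $\dim T_xM_0 = 2m$ and $T_xM_0 = D_1(x)$. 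When $m=0$ this count gives $\dim M_0=0$, i.e.\ $M_0$ is discrete. The K\"ahler and totally geodesic properties of $M_0$ then follow since $N$ consists of totally geodesic complex submanifolds (the zero set of a holomorphic Killing field is a complex totally geodesic submanifold), and completeness of $M_0$ when $M$ is complete is automatic for closed totally geodesic submanifolds.

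The main obstacle I anticipate is the precise identification $T_xM_0=D_1(x)$ at the degenerate points $x\in M_0$, where the adapted-frame block decomposition \eqref{diagonal-1} is only valid pointwise and the nice $\nabla$-invariant splitting on $M_{\textrm{reg}}$ breaks down. Rather than a direct frame computation, the cleanest route is probably to take a point $x\in M_0$, note that $J\Lambda$ is Killing with $(J\Lambda)(x)=0$, so $T_xM_0\subseteq T_xN = \ker(\nabla(J\Lambda)|_x)$ and the normal space $\nu_x N$ is $(\nabla(J\Lambda)|_x)$-invariant; then use that $\nabla\Lambda$ commutes with $A$ and the explicit form of the right-hand side of \eqref{mobility_eq} to compute $\nabla\Lambda$ at $x$ in terms of $A_x$ and $\Lambda_a=0$, concluding that $\nabla\Lambda|_x$ annihilates exactly $D_1(x)\oplus D_0'(x)$ (where $D_0'$ is the genuine $0$-eigenspace of multiplicity $2\tilde m$) and is an isomorphism on the remaining $2$-plane, which is the tangent plane to the orbit through $x$ of the $\mathbb R$-action generated by $J\Lambda$. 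The totally geodesic, complex submanifold $M_0$ then has tangent space the fixed-point set of this rotation, intersected correctly with the eigenspaces; matching multiplicities finishes the argument. Everything else — that $M_0$ is closed (preimage of a point under $\rho$), that it is a union of \emph{connected} such submanifolds, and the completeness assertion — is routine.
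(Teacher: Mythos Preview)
Your approach via the zero set of the Killing field $J\Lambda$ is different from the paper's, which instead uses Topalov's quadratic integrals $\tilde I_t$ from \eqref{INT}: the paper shoots geodesics from $x_0\in M_0$ with initial velocity in the $1$-eigenspace and uses $\tilde I_0$ (and the order of vanishing of $\tilde I_t$ at $t=0$) to show they stay in $M_0$, then shoots geodesics from nearby regular points back to $M_0$ and uses $\tilde I_0$ again to bound the dimension of $M_0$ from above by $2m$. No analysis of $\nabla\Lambda|_x$ is needed.

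Your route has a genuine gap, and in fact an internal inconsistency. Since $\rho$ is locally constant on $N$ (because $d\rho=g(\Lambda,\cdot)$ and $\Lambda\equiv 0$ on $N$), the set $M_0$ is a union of connected components of $N$, so $T_xM_0=T_xN=\ker(\nabla\Lambda|_x)$ for $x\in M_0$. You assert that this kernel equals $D_1(x)\oplus D_0'(x)$, of dimension $2m+2\tilde m=2n-2$; but then $\dim M_0=2n-2$, which contradicts the stated conclusion $\dim M_0=2m$ whenever $\tilde m\geq 1$. The ``genuine $0$-eigenspace $D_0'(x)$'' is moreover not defined at $x\in M_0$: there the $\rho$-eigenspace and the $0$-eigenspace have merged into a single $(2\tilde m+2)$-dimensional eigenspace $E_0(x)$, with no canonical splitting. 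Finally, the mobility equation \eqref{mobility_eq} gives $\nabla A$, not $\nabla\Lambda$; at $x$ it only says $\nabla A|_x=0$ (since $\Lambda(x)=0$), which does not compute $\nabla\Lambda|_x$.

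What you would actually need is $\ker(\nabla\Lambda|_x)=D_1(x)$. The inclusion $D_1(x)\subseteq\ker(\nabla\Lambda|_x)$ does follow easily: differentiating $(A-\rho\,\mathrm{Id})\Lambda=0$ and evaluating at $x$ gives $A_x(\nabla_v\Lambda|_x)=0$, so $\mathrm{im}(\nabla\Lambda|_x)\subseteq E_0(x)$, and self-adjointness yields $D_1(x)=E_0(x)^\perp\subseteq\ker(\nabla\Lambda|_x)$. The hard direction is the reverse inclusion, i.e.\ that $\nabla\Lambda|_x$ is \emph{invertible} on all of $E_0(x)$ (dimension $2\tilde m+2$). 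Nothing in your outline establishes this nondegeneracy, and commutation with $A_x$ alone cannot, since $A_x$ acts as zero on $E_0(x)$. The paper's integrals $\tilde I_t$ supply exactly this missing upper bound on $\dim M_0$ without having to control $\nabla\Lambda|_x$.
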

\begin{proof}
It suffices to prove the statement for $M_0$, since replacing $A$ by $\textrm{Id}-A$ (which obviously also satisfies $\textrm{(P)}$) interchanges $M_0$ and $M_1$.

Suppose $M_0\neq \emptyset$. 
Fix a point $x_0\in M_0$ and a convex neighbourhood $U$ of $x_0$ and let $\varepsilon >0$ be sufficiently 
small such that the image of
$$S_{x_0}^{\varepsilon}:=\{\xi\in T_{x_0}M: \xi_i=0 \textrm{ for } i=1,2 \textrm{ or } i\geq 2m+3, \textrm{ and } \xi_{3}^2+...+\xi_{2m+2}^2<\varepsilon\}$$
under the exponential map is contained in $U$.
We aim to show that, by possibly shrinking $U$, we can achieve that $\exp(S_{x_0}^{\varepsilon})=U\cap M_0$. Note that, since $x_0\in M_0$ was
arbitrary, this would imply that $M_0$ is a totally geodesic closed K\"ahler submanifold of dimension $2m$ (or a discrete subset, if $m=0$) as desired. 
 
Now consider the family $\tilde I_t$ of integrals defined as in \eqref{INT}
and let $\gamma: [0,1]\rightarrow U$ be a geodesic with $\gamma(0)=x_0$ and 
$\gamma'(0)=(\xi_1,..., \xi_{2n})\in S_{x_0}^{\varepsilon}\setminus\{0\}$.  
Then,   $$\tilde I_t(\gamma'(0))=t^{\alpha(\tilde m)+1}(\xi_{3}^2+...+\xi_{2m+2}^2),$$ which has a zero of order $\alpha(\tilde m)+1$ at $t=0$. Hence the 
same must be true at $\gamma(1)$ and substituting $t=0$ in \eqref{INT}
we therefore obtain
\begin{equation}
\tilde I_0(\gamma'(1))= (-1)^{\alpha(m)}0^{\alpha(\tilde m)}(\xi_1^2 + \xi_2^2)-0^{\alpha(\tilde m)}\rho(\xi_3^2+...+\xi_{2m+2}^2) +(-1)^{\alpha(m)+1}\rho(\xi_{2m+3}^2+...+\xi_{2n}^2) =0, \label{ordering}
\end{equation}
where the $\xi_i$'s now denote the coefficients of $\gamma'(1)$ with respect to an adapted basis of $T_{\gamma(1)}M$.
\textbf{Case 1:} Suppose $\tilde m=0$.
\\Then have $$\tilde I_0(\gamma'(1))=-\left((\xi_1^2 + \xi_2^2)+\rho(\xi_3^2+...+\xi_{2n}^2)\right)=0,$$ which implies $\rho(\gamma(1))=0$, since $\gamma'(1)\neq 0$. 
Hence, $\exp(S_{x_0}^{\varepsilon})\subset U\cap M_0$ in this case. Moreover, $M_0$ is contained in the zero set of the non-trivial Killing vector field $J\Lambda$, 
which equals a union of connected submanifolds of dimension at most $2n-2$. For dimensional reasons we can hence achieve by possibly shrinking $U$ that $\exp(S_{x_0}^{\varepsilon})=U\cap M_0$.

\noindent
\textbf{Case 2:} Suppose $\tilde m\geq 1$.
\\If $\rho(\gamma(1))\neq 0$, then \eqref{ordering} implies that at $\gamma(1)$ we have $\xi_i=0$ for $2m+3\leq i\leq 2n$ and hence
$$ \tilde I_t(\gamma'(1))=  \left(t\left((t-1)^{\alpha(m)}(\xi_1^2 + \xi_2^2)+(t-\rho)(\xi_3^2+...+\xi_{2m+2}^2)\right)\right).$$ Since $\tilde I_t(\gamma'(1))$ must have a zero of order $\alpha(\tilde m)+1=2$ at $t=0$, 
this implies that also $\xi_i=0$ at $\gamma(1)$ for $1\leq i\leq 2m+2$, which contradicts our assumptions. 
Thus, we must have $\rho(\gamma(1))=0$ and hence $\exp(S_{x_0}^{\varepsilon})\subset U\cap M_0$.

Now take a point $y\in U\setminus (U\cap M_0)$ (since the set of regular point is dense in $M$, almost every point in $U$ has this property) and let $x\in M_0\cap U$. 
Then there exists a geodesic $\gamma: [0,1]\rightarrow U$ connecting $y=\gamma(0)$ with $x=\gamma(1)$. Since $\rho$ is zero at $x$, we see that the 
integral $\tilde I_0$ is zero along $\gamma$ and hence we deduce from \eqref{INT} that $\xi_{2m+3}=...=\xi_{2n}=0$ at $y$. Thus, $M_0\cap U\subset \exp(V_y)$, 
where $V_y$ is the $2m+2$-dimensional subspace of $T_yM$  defined by the condition  $\xi_{2m+3}=...=\xi_{2n}=0$. Since $y$ was arbitrary in $U\setminus U\cap M_0$, 
we must have 
\begin{equation}\label{intersection}
M_0\cap U\subset \bigcap_{y\in  U\setminus U\cap M_0}\exp(V_y).
\end{equation}
Since $\exp(V_y)$ is locally a $2m+2$-dimensional submanifold and $2m+2<2n$, the intersection  $\bigcap_{y\in  U\setminus U\cap M_0}\exp(V_y)$ 
must locally be a submanifold of dimension at most $2m$. Since we moreover have $\exp(S_{x_0}^{\varepsilon})\subset U\cap M_0$, we conclude again that we can arrange
$\exp(S_{x_0}^{\varepsilon})=M_0\cap U$ by possible shrinking $U$. 
 \end{proof}
 
Using the quadratic integrals \eqref{INT}, we next establish some key properties of certain types of geodesics of K\"ahler manifolds with property (P):

\begin{prop}\label{geodesics1} 
Suppose $(M, J, g)$ is a connected K\"ahler manifold of dimension $2n\geq 4$ and $A\in\emph{Sol}(J,g)$ satisfies property $\emph{(P)}$. Then the following holds:
\begin{enumerate}
\item Let $\gamma:I\rightarrow M$ be a geodesic and suppose there exists $s_0\in I$ such that $\Lambda(\gamma(s_0))\neq 0$ and $\gamma'(s_0)$
is proportional to $\Lambda(\gamma(s_0))$. Then for all $s\in I$ such that $\Lambda(\gamma(s))\neq 0$ the velocity vector $\gamma'(s)$ is proportional to $\Lambda(\gamma(s))$.
\item The distribution $D$ of rank $2m+2$ on $M\setminus M_0$ generated by the eigenvectors corresponding to the eigenvalues $\rho$ and $1$ is totally geodesic.
\item If $M_1\neq \emptyset$, then for any geodesic $\gamma: I\rightarrow M\setminus M_0$ with $\gamma(s_0)\in M_1$ and $\gamma'(s_0)\in T^\perp_{\gamma(s_0)}M_1$ for some $s_0\in I$, the velocity vector $\gamma'(s)$
is a $\rho$-eigenvector of $A$ for all $s\in I$. Moreover, it is proportional to $\Lambda(\gamma(s))$ provided the latter is not zero.
\item If $M_0\neq\emptyset$ and $M_1\neq\emptyset$, then any geodesic connecting a point of $M_0$ with a point of $M_1$ is orthogonal to $M_0$ respectively $M_1$ at these points.

\end{enumerate}

\end{prop}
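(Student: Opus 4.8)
The engine for all four statements is the family of quadratic integrals $\tilde I_t$ of \eqref{INT} together with the linear integral $\xi\mapsto g(J\Lambda,\xi)$ coming from the holomorphic Killing field $K=J\Lambda$ of Proposition~\ref{Killing}: along any geodesic $\gamma$, each of $s\mapsto\tilde I_t(\gamma'(s))$ (for fixed $t$) and $s\mapsto g(J\Lambda(\gamma(s)),\gamma'(s))$ is constant, and for fixed $\xi$ the map $t\mapsto\tilde I_t(\xi)$ is a polynomial in $t$ whose coefficients are again integrals. In an adapted basis at \emph{any} point $x$, substituting $t=\rho(x)$, $t=0$ and $t=1$ into \eqref{INT} and cancelling the resulting zero factors gives $\tilde I_{\rho(x)}(\xi)=(\rho-1)^{\alpha(m)}\rho^{\alpha(\tilde m)}(\xi_1^2+\xi_2^2)$, and (for $\tilde m\geq1$) $\tilde I_0(\xi)=(-1)^{\alpha(m)+1}\rho\,(\xi_{2m+3}^2+\cdots+\xi_{2n}^2)$, and (for $m\geq1$) $\tilde I_1(\xi)=(1-\rho)(\xi_3^2+\cdots+\xi_{2m+2}^2)$. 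Writing $E_0,E_1,E_\rho$ for the eigenspaces of $A$, these identities yield: (i) when $\rho(x)\in(0,1)$ one has $\tilde I_0(\xi)=0\iff\xi\perp E_0$ and $\tilde I_1(\xi)=0\iff\xi\perp E_1$, hence $\xi\in E_\rho\iff\tilde I_0(\xi)=\tilde I_1(\xi)=0$; (ii) $\tilde I_0$ vanishes on all of $T_xM$ whenever $x\in M_0$ (and $\tilde m\geq1$), and $\tilde I_1$ vanishes on all of $T_xM$ whenever $x\in M_1$ (and $m\geq1$). In the degenerate case $m=0$ one has $D=E_\rho=E_0^{\perp}$ and only $\tilde I_0$ is needed; for $\tilde m=0$ one has $E_\rho=E_1^{\perp}$ and only $\tilde I_1$ is needed.

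For (1): a point where $\Lambda\neq0$ is necessarily regular ($A$ has eigenvalues $\{\rho,1,0\}$ with fixed multiplicities, $\rho(x)\in(0,1)$, and $d\rho(x)\neq0$ there), so $E_\rho=\mathrm{span}(\Lambda,J\Lambda)$ by Proposition~\ref{prop_eigenvalues}(1). Choose at $\gamma(s_0)$ an adapted basis with $e_1$ a multiple of $\Lambda$ and $e_2=Je_1$; then $\gamma'(s_0)$ has only its first coordinate nonzero, so $\tilde I_t(\gamma'(s_0))=(t-1)^{\alpha(m)}t^{\alpha(\tilde m)}c$ with $c=|\gamma'(s_0)|^2$, and by constancy $\tilde I_t(\gamma'(s))=(t-1)^{\alpha(m)}t^{\alpha(\tilde m)}c$ for all $s$. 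At any $s$ with $\Lambda(\gamma(s))\neq0$ we have $\rho(\gamma(s))\in(0,1)$; evaluating at $t=\rho(\gamma(s))$ gives $(\rho-1)^{\alpha(m)}\rho^{\alpha(\tilde m)}(\xi_1^2+\xi_2^2)=(\rho-1)^{\alpha(m)}\rho^{\alpha(\tilde m)}c$, and as the coefficient is nonzero this forces $\xi_1^2+\xi_2^2=c=|\gamma'(s)|^2$, i.e.\ $\gamma'(s)\in E_\rho(\gamma(s))=\mathrm{span}(\Lambda,J\Lambda)$. Since $g(J\Lambda,\gamma'(\cdot))$ is constant and vanishes at $s_0$ (where $\gamma'(s_0)$ is a multiple of $\Lambda$, and $\Lambda\perp J\Lambda$), we get $\gamma'(s)\perp J\Lambda(\gamma(s))$ and hence $\gamma'(s)\parallel\Lambda(\gamma(s))$.

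For (2): on $M\setminus M_0$ one has $D=E_0^{\perp}$, so by (i) (using $\rho>0$ there) $\gamma'(s)\in D\iff\tilde I_0(\gamma'(s))=0$; as $\tilde I_0(\gamma'(s))$ is constant in $s$, $\gamma'(s_0)\in D$ forces $\gamma'(s)\in D$ for all $s$ along a geodesic contained in $M\setminus M_0$, whence $D$ is totally geodesic (the case $\tilde m=0$, $D=TM$, is trivial). For (3): if $\gamma(s_0)\in M_1$ and $\gamma'(s_0)\in T^\perp_{\gamma(s_0)}M_1$, then $T_{\gamma(s_0)}M_1=E_0(\gamma(s_0))$ by Proposition~\ref{M0M1}, so $\gamma'(s_0)\in E_0^{\perp}=D_{\gamma(s_0)}$; hence $\tilde I_0(\gamma'(\cdot))\equiv0$ by (2), and $\tilde I_1(\gamma'(s_0))=0$ (as $\gamma(s_0)\in M_1$, $m\geq1$) gives $\tilde I_1(\gamma'(\cdot))\equiv0$ too. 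At points $\gamma(s)\notin M_0\cup M_1$ part (i) then forces $\gamma'(s)\in E_\rho(\gamma(s))$, while at points of $M_1$ every vector of $D$ is a $\rho$-eigenvector (there $\rho=1$); the cases $m=0$ (where $D=E_\rho$) and $\tilde m=0$ (where $M_1$ is discrete and one uses $\tilde I_1$ alone) are handled the same way with fewer integrals. The final assertion of (3) follows exactly as in (1) from $g(J\Lambda,\gamma'(\cdot))\equiv g(J\Lambda(\gamma(s_0)),\gamma'(s_0))=0$, since $\gamma(s_0)\in M_1\subseteq N$.

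For (4): let $\gamma$ run from $x_0\in M_0$ to $x_1\in M_1$. Then $\tilde I_0(\gamma'(s))$ is constant with value $0$ at $x_0$ (as $x_0\in M_0$), so evaluating at $x_1\notin M_0$ gives $\gamma'$ at $x_1$ orthogonal to $\ker A=E_0=T_{x_1}M_1$ (Proposition~\ref{M0M1}; vacuous when $\tilde m=0$). Symmetrically, $\tilde I_1(\gamma'(s))$ is constant with value $0$ at $x_1$ (when $m\geq1$, as $x_1\in M_1$), so evaluating at $x_0\notin M_1$ gives $\gamma'$ at $x_0$ orthogonal to $E_1=T_{x_0}M_0$ (vacuous when $m=0$). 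The points requiring care are that an adapted basis exists at \emph{every} point of $M$, so that \eqref{INT} and the detection identities are available where we apply them; that every point with $\Lambda\neq0$ is regular, so that Proposition~\ref{prop_eigenvalues}(1) applies in (1) and $\gamma'\in E_\rho$ together with $\gamma'\perp J\Lambda$ forces proportionality to $\Lambda$; and the consistent treatment of the degenerate cases $m=0$ and $\tilde m=0$, where some of $E_0,E_1$ are trivial, $M_0$ or $M_1$ is discrete, and $D$ may be all of $TM$. Once the correct integral is identified for each claim no single step is difficult; this case bookkeeping is the only real obstacle.
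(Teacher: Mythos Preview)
Your proof is correct and follows essentially the same approach as the paper's: both arguments rest on evaluating the quadratic integrals $\tilde I_t$ of \eqref{INT} at well-chosen values of $t$ together with the linear Killing integral $g(J\Lambda,\cdot)$. The only notable difference is in part~(1), where the paper evaluates $\tilde I_0$ and $\tilde I_1$ separately to kill the $E_0$- and $E_1$-components, while you evaluate at $t=\rho(\gamma(s))$ to obtain $\xi_1^2+\xi_2^2=|\gamma'(s)|^2$ in one stroke; this is a pleasant shortcut but not a genuinely different route.
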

\begin{proof}
(1) With respect to an adapted basis of $T_{\gamma(s_0)}M$, the family of integrals $\tilde I_t$ defined as in \eqref{INT} satisfies (we again assume $\gamma'(s_0)=(\xi_1,...,\xi_{2n})$) that
$$\tilde I_t(\gamma'(s_0))=(t-1)^{\alpha(m)}t^{\alpha(\tilde m)}(\xi_1^2+\xi_2^2),$$ by 
Proposition \ref{prop_eigenvalues}(1). 
Hence, $\tilde I_0(\gamma'(s_0))=0$ for $\tilde m\geq 1$ and $\tilde I_1(\gamma'(s_0))=0$ for $m\geq 1$. 
Therefore at any point $\gamma(s)$ of the geodesic we must have
\begin{align} \label{EqInt1}
\tilde I_0(\gamma'(s))=(-1)^{\alpha(m)+1}\rho(\xi^2_{2m+3}+...+\xi^2_{2n})=0\quad \textrm{ if } \tilde m\geq 1,\\
\tilde I_1(\gamma'(s))=(1-\rho)(\xi^2_{3}+...+\xi^2_{2m+2})=0\quad  \textrm{ if } m\geq 1,  \label{EqInt2}
\end{align}
where $\{\xi_i\}$ now denotes the coordinates of $\gamma'(s)$ with respect to an adapted basis of $T_{\gamma(s)}M$.
At points where $\rho$ equals $0$ or $1$, the function $\rho$ has an extremum and so $\Lambda=\textrm{gr}(\rho)=0$ at these points. Hence, we conclude from \eqref{EqInt1} and \eqref{EqInt2} 
that $\gamma'(s)$ is a linear combination of $\Lambda$ and $J\Lambda$ at all points $\gamma(s)$ where these vector fields are not zero. Since $J\Lambda$ is a Killing vector field and orthogonal 
to $\gamma'(s_0)$, the same must be true for $\gamma'(s)$ for any $s$, which shows that $\gamma'(s)$ is proportional to $\Lambda$ at all points $\gamma(s)$ where the latter does not vanish.

$(2)$ Note first that for $\tilde m=0$ the statement is trivially satisfied, since $D=TM|_{M\setminus M_0}$ in this case. Assume now $\tilde m\geq 1$ and 
consider a geodesic $\gamma: I\rightarrow M\setminus M_0$ such that $\gamma'(s_0)\in D_{\gamma(s_0)}$ for some $s_0\in I$. Then $\tilde I_0(\gamma'(s_0))=0$ and hence for all $s\in I$ we have
$$\tilde I_0(\gamma'(s))=(-1)^{\alpha(m)+1}\rho(\xi_{2m+3}^2+...+\xi_{2n}^2)=0$$ with respect to an adapted basis of $T_{\gamma(s)}M$. Since $\rho(\gamma(s))\neq 0$, this implies 
$\gamma'(s)\in D_{\gamma(s)}$ for all $s\in I$.

$(3)$ Consider a geodesic $\gamma: I\rightarrow M\setminus M_0$ with $\gamma(s_0)\in M_1$ and $\gamma'(s_0)\in T_{\gamma(s_0)}^{\perp}M_1$. Note that, by Proposition \ref{M0M1},  $T_{\gamma(s_0)}^{\perp}M_1=D_{\gamma(s_0)}$. In particular, if $m=0$, the first part of the statement holds by (2). Now suppose $m\geq 1$. Then $\tilde I_1(\gamma'(s_0))=0$ and hence $\tilde I_1(\gamma'(s))=0$ for all $s\in I$. By (2), this implies that $\gamma'(s)$ is a $\rho$-eigenvector at all points with $\rho(\gamma(s))\neq 1$ and hence at all points of $\gamma(s)$ by continuity.
For the second statement, recall that, by Proposition \ref{prop_eigenvalues}, $\gamma'(s)$ is a linear combination of $\Lambda$ and $J\Lambda$ at all point $\gamma(s)$ where these vector fields do not vanish.
Moreover, since $\rho$ has a maximum at $\gamma(s_0)$, the Killing vector field $J\Lambda$ vanishes at $\gamma(s_0)$ and hence the inner product of $J\Lambda$ with $\gamma'(s)$ must be zero for all $s\in I$. 

$(4)$ Consider a geodesic $\gamma:[0,1]\rightarrow M$ such that $\gamma(0)\in M_0$ and $\gamma(1)\in M_1$. Since $\rho(\gamma(0))=0$, it follows from \eqref{INT} that 
$\tilde I_0(\gamma'(0))=0$. Hence, also $\tilde I_0(\gamma'(1))=0$ and, since $\rho(\gamma(1))=1$, we conclude from \eqref{INT} and Proposition \ref{M0M1} 
that $\gamma'(1)\in T^{\perp}_{\gamma(1)}M_1$. Similarly, since $\rho(\gamma(1))=1$, formula \eqref{INT} shows $\tilde I_{1}(\gamma'(1))=0$. Hence, also $\tilde I_{0}(\gamma'(0))=0$. Since $\rho(\gamma(0))=0$, this implies, by
 \eqref{INT} and Proposition \ref{M0M1}, that $\gamma'(0)\in T^{\perp}_{\gamma(0)}M_0$.
\end{proof}

\begin{rem}
Since the roles of $M_0$ and $M_1$ exchange, if one replaces $A$ by $\textrm{Id}-A$, the analogues of the statements $(2)$ and $(3)$ in Proposition \ref{geodesics1} also hold for $M_1$. 
\end{rem}

For a solution $A\in\textrm{Sol}(J,g)$ of the mobility equation with property (P), the set of regular points $M_{\textrm{reg}}\subseteq M$ as defined in Definition \ref{reg_points} simply coincides with the set of points of $M$ on which $\Lambda$ does not vanish, equivalently, on which the holomorphic Killing vector field $J\Lambda$ does not vanish. We have already mentioned that $M_{\textrm{reg}}$ is an open dense subset in $M$. Since $M_{\textrm{reg}}$
is the complement of the zero set of a non-trivial Killing vector field, it is the complement of a submanifold of co-dimension at least two and as such it is connected.
 To study the topology of $M$ let us consider 
the foliation $\mathcal F$ on $M_{\textrm{reg}}$ generated by the function $\rho$, that is, its leaves are the connected components of the level sets of $\rho$. Since $\textrm{gr}(\rho)=\Lambda$ does not vanish on $M_{\textrm{reg}}$, they are connected $2n-1$-dimensional submanifolds of $M$. We write $L(x)\subset M_{\textrm{reg}}$ for the leaf containing $x\in M_{\textrm{reg}}$.

\begin{lem}\label{norm_lambda} 
Suppose $(M,J,g)$ is a connected K\"ahler manifold of dimension $2n\geq 4$ and $A\in\emph{Sol}(J,g)$  has property $\emph{(P)}$. Let $\xi\in T_xM$ for some $x\in M$ be a linear combination of $K=J\Lambda$ and 
of eigenvectors corresponding to eigenvalues $0$ and $1$. Then the derivative of the function $g(\Lambda,\Lambda)=g(K,K)$ vanishes in direction of $\xi$.
\end{lem}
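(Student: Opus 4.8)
The plan is to compute the derivative of $f := g(\Lambda,\Lambda) = g(K,K)$ using the fact that $K = J\Lambda$ is a holomorphic Killing field and $\Lambda = \mathrm{gr}(\lambda)$ with $\nabla\Lambda$ being $(J,g)$-Hermitian and commuting with $A$ (Proposition \ref{Killing}). Explicitly, for any $\xi\in T_xM$,
\begin{equation*}
\tfrac{1}{2}\,\xi\big(g(\Lambda,\Lambda)\big) = g(\nabla_\xi\Lambda,\Lambda) = (\nabla\Lambda)(\xi,\Lambda),
\end{equation*}
where I am regarding $\nabla\Lambda$ as the symmetric bilinear form $(\xi,\zeta)\mapsto g(\nabla_\xi\Lambda,\zeta)$ (symmetry because $\Lambda$ is a gradient). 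So the claim reduces to showing that $\Lambda$ lies in the kernel of the Hermitian endomorphism $P := \nabla\Lambda$ paired against vectors $\xi$ of the stated type, i.e. that $g(P\Lambda,\xi)=0$ whenever $\xi$ is a combination of $K$ and of $0$- and $1$-eigenvectors of $A$. Since $P$ is $g$-symmetric, it suffices to show $P\Lambda$ is itself a combination of eigenvectors of $A$ with eigenvalue $\rho$ (the $\rho$-eigenspace is spanned by $\Lambda$ and $J\Lambda$ by Proposition \ref{prop_eigenvalues}(1)), because the $\rho$-eigenspace is $g$-orthogonal to the $0$- and $1$-eigenspaces and to $K$ — the latter since $K=J\Lambda$ and $J$ preserves $g$, so $g(K,\xi) = -g(\Lambda,J\xi)$, and for $\xi$ in a $\{0,1\}$-eigenspace $J\xi$ is too (these eigenspaces are $J$-invariant), while for $\xi = K$ one has $g(P\Lambda,K) = g(\nabla_K\Lambda,\Lambda) = \tfrac12 K(\lambda$-related quantity$)$, which vanishes because $K$ is the Hamiltonian/Killing field generated by $\Lambda$ and $\lambda$ is constant along $K$.

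First I would establish that $P\Lambda$ is a $\rho$-eigenvector of $A$ on the regular set. The key input is that $P=\nabla\Lambda$ commutes with $A$. Differentiating the eigenvalue relation $A\Lambda = \rho\Lambda$ (valid on $M_{\mathrm{reg}}$) and using the mobility equation \eqref{mobility_eq} to express $\nabla A$ in terms of $\Lambda$, $g$, $\Omega$, one gets a relation of the form $A(P\Lambda) = \rho\,P\Lambda + (\text{lower-order terms involving } \Lambda, K)$; combined with $AP = PA$ and $A\Lambda=\rho\Lambda$, $AK = \rho K$ this should pin down $P\Lambda \in \mathrm{span}(\Lambda, K)$. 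Alternatively, and perhaps more cleanly, I would use the known structure of $\nabla\Lambda$ on $M_{\mathrm{reg}}$ for solutions with property (P): the eigenvalue structure forces $\nabla\Lambda$ to be block-diagonal in the adapted splitting, with its only nontrivial block living in the $\rho$-eigenplane. This is essentially the statement that $\mathrm{gr}(\rho)$ flows tangentially to the leaves' normal directions, which follows from Proposition \ref{geodesics1}(1)–(2): geodesics tangent to $\Lambda$ stay tangent to $\Lambda$, so $\nabla_\Lambda\Lambda \in \mathrm{span}(\Lambda,K)$, and totally-geodesy of the various distributions gives $\nabla_\xi\Lambda \perp \xi$ for $\xi$ in the $\{0,1\}$-eigenspaces. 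Either route shows $g(P\Lambda,\xi)=0$ for $\xi$ of the stated type, on $M_{\mathrm{reg}}$.

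Then I would extend the conclusion from the dense open set $M_{\mathrm{reg}}$ to all of $M$ by continuity: both sides of the identity $\xi(g(\Lambda,\Lambda))=0$ are continuous in $x$ and in $\xi$, and on the complement of $M_{\mathrm{reg}}$ (where $\Lambda=K=0$) the statement is anyway vacuous or trivial since $g(\Lambda,\Lambda)$ has a critical point there. More carefully: the derivative of $g(\Lambda,\Lambda)$ in direction $\xi$ is $2(\nabla\Lambda)(\xi,\Lambda)$, a smooth function of $(x,\xi)$; it vanishes on the dense set of regular points for all admissible $\xi$, and the admissibility condition ($\xi$ a combination of $K$ and $\{0,1\}$-eigenvectors) is a closed condition, so the identity persists on $M$.

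The main obstacle I anticipate is the eigenspace-differentiation step — showing $P\Lambda$ stays in the $\rho$-eigenplane — because one must carefully handle the terms coming from $\nabla A$ via the mobility equation and confirm that the "error" terms genuinely lie in $\mathrm{span}(\Lambda, K)$ rather than leaking into other eigenspaces; getting the signs and the $\Omega$-contractions right is where the real work is. A secondary subtlety is verifying $g(P\Lambda, K)=0$ cleanly: this is $g(\nabla_K\Lambda,\Lambda)$, which equals $\tfrac12 K(g(\Lambda,\Lambda)) = \tfrac12 K(|\mathrm{gr}\,\rho|^2)$, and one needs that $\rho$ (hence $|\mathrm{gr}\,\rho|^2$, by the structure of the integrals $\tilde I_t$ or directly) is invariant under the flow of the Killing field $K=J\Lambda$ — which holds because $K$ is Hamiltonian with Hamiltonian essentially $\lambda$, so $\{λ,λ\}=0$ gives $K(\lambda)=0$, and then $K$-invariance of the whole construction gives $K(|\mathrm{gr}\,\rho|^2)=0$.
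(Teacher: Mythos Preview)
Your approach is correct and matches the paper's: reduce to $g(\nabla_\xi\Lambda,\Lambda)=g(\nabla_\Lambda\Lambda,\xi)$ via self-adjointness of $P=\nabla\Lambda$, show $P\Lambda$ lies in the $\rho$-eigenspace, and conclude by orthogonality (the non-regular case being trivial since $g(\Lambda,\Lambda)$ has a minimum there).

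However, you are making the ``main obstacle'' much harder than it is. The fact that $P\Lambda$ is a $\rho$-eigenvector of $A$ is a one-line consequence of Proposition~\ref{Killing}: since $P=\nabla\Lambda$ commutes with $A$ and $A\Lambda=\rho\Lambda$, we get $A(P\Lambda)=P(A\Lambda)=\rho\,P\Lambda$. There is no need to differentiate the eigenvalue relation, no need to expand the mobility equation, and no need to invoke Proposition~\ref{geodesics1} or total geodesy. The paper then notes that, since $P$ is $(J,g)$-Hermitian and the $\rho$-eigenspace is $\mathrm{span}(\Lambda,J\Lambda)$, one in fact has $P\Lambda$ proportional to $\Lambda$; this makes the final self-adjointness step simply $g(\nabla_\xi\Lambda,\Lambda)=g(\nabla_\Lambda\Lambda,\xi)=0$ for every $\xi\in\Lambda^\perp$, covering $K$ and the $0$- and $1$-eigenspaces simultaneously and obviating your separate treatment of $g(P\Lambda,K)$.
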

\begin{proof}
At a zero $x\in M$ of $\Lambda$ the function $g(\Lambda,\Lambda)$ has a minimum and hence its derivative vanishes at $x$. Suppose now $x\in M$ is regular, that is, $\Lambda(x)\neq0$ (in particular $\rho(x)\neq 0,1$). 
Recall that by Proposition \ref{Killing} the endomorphism $\nabla \Lambda$ of $TM$ commutes with $A$ and $J$, and is self-adjoint with respect to $g$. Hence,
it follows from Proposition \ref{prop_eigenvalues} that $\nabla_{\Lambda}\Lambda$ is a $\rho$-eigenvector of $A$ and, since $\nabla \Lambda$ is $(J,g)$-Hermitian, moreover that it is proportional to $\Lambda$ at $x$.
Self-adjointness of $\nabla \Lambda$ with respect to $g$ (i.e.\,$g_{ac}\nabla_b\Lambda^c$ is symmetric in $a$ and $b$) implies then that $g(\nabla_\xi\Lambda, \Lambda)=g(\nabla_\Lambda\Lambda, \xi)=0$ for $\xi\in\Lambda^{\perp}$. 

\end{proof}

\begin{prop}\label{foliation} 
Suppose $(M,J,g)$ is a connected K\"ahler manifold of dimension $2n\geq 4$ and $A\in\emph{Sol}(J,g)$  has  property $\emph{(P)}$. Then the foliation $\mathcal F$ on the set of regular points $M_{\emph{reg}}$ has the following properties:
\begin{enumerate}
\item For any $x\in M_{\emph{reg}}$ the tangent space of the leaf $L(x)$ at $x$ is generated by $J\Lambda$ and the eigenvectors of $A$ with eigenvalues $0$ and $1$.
\item The function $g(\Lambda, \Lambda)$ is constant on the leaves of $\mathcal F$.
\item Each leaf is a closed subset of $M$.
\item For any $x\in M_{\emph{reg}}$ there exist an $\varepsilon >0$ and a neighbourhood $U(x)$ in $M_{\emph{reg}}$ such that 
\begin{itemize}
\item for any $t\in(-\varepsilon, \varepsilon)$  the flow $\phi_t$ of $\Lambda$ is defined on $U(x)$ and $$\phi_t(U(x)\cap L(x))\subseteq L(\phi_t(x)),$$
\item for any fixed $t\in(-\varepsilon, \varepsilon)$ the distance function $y\mapsto\textrm{dist}(y, \phi_{t}(y))$ is constant on $U(x)\cap L(x)$.
\end{itemize}
\end{enumerate}
 
\end{prop}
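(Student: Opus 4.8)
The plan is to prove the four assertions of Proposition \ref{foliation} in order, bootstrapping each from the previous ones and from the structural results already established in this section.

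\medskip

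\textbf{Step (1): tangent space of the leaf.} First I would compute the tangent distribution of $\mathcal F$ directly. By definition, $T_xL(x)=\ker(d\rho_x)=\Lambda(x)^{\perp}$ at a regular point, since $\textrm{gr}(\rho)=\Lambda\neq0$ there. On the other hand, at a regular point $A$ is diagonalisable with eigenvalues $0,1$ and the simple (multiplicity $2$) eigenvalue $\rho$, and by Proposition \ref{prop_eigenvalues}(1) the $\rho$-eigenspace is spanned by $\Lambda$ and $J\Lambda$. Hence the orthogonal complement $\Lambda^{\perp}$ is the direct sum of the $0$- and $1$-eigenspaces together with the line $\R J\Lambda$ (note $g(\Lambda,J\Lambda)=0$ since $\Omega$ is skew). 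This gives (1).

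\medskip

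\textbf{Step (2): $g(\Lambda,\Lambda)$ is constant on leaves.} This is immediate from (1) and Lemma \ref{norm_lambda}: a tangent vector $\xi$ to a leaf is precisely a linear combination of $J\Lambda$ and of $0$- and $1$-eigenvectors of $A$, and Lemma \ref{norm_lambda} says the derivative of $g(\Lambda,\Lambda)$ in any such direction vanishes; since leaves are connected, $g(\Lambda,\Lambda)$ is constant on each leaf.

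\medskip

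\textbf{Step (3): leaves are closed in $M$.} Here the point is that although $M_{\textrm{reg}}$ is open and the level sets of $\rho|_{M_{\textrm{reg}}}$ need not a priori be closed in $M$, the extra information that $g(\Lambda,\Lambda)$ is a leaf-constant controls what happens when one approaches the zero set $N=M\setminus M_{\textrm{reg}}$. I would argue: fix $x\in M_{\textrm{reg}}$, let $c:=\rho(x)\in(0,1)$, and suppose $x_k\in L(x)$ with $x_k\to y\in M$. Then $\rho(y)=c$ by continuity, so $\Lambda(y)\neq 0$, i.e. $y\in M_{\textrm{reg}}$; it remains to see $y\in L(x)$, i.e. that $y$ lies in the \emph{same connected component} of $\rho^{-1}(c)$. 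Since $y$ is regular, a small neighbourhood $V$ of $y$ is foliated by level sets of $\rho$ with $V\cap\rho^{-1}(c)$ connected; for $k$ large $x_k\in V\cap \rho^{-1}(c)\subseteq L(y)$, so $L(x)$ and $L(y)$ meet, hence coincide. Thus $L(x)$ is closed in $M$. (The role of (2) here is mostly conceptual — the genuinely delicate closedness statement, that the leaf does not accumulate on $N$, follows because $\rho$ is bounded away from $0$ and $1$ on the leaf, which is what (2)-type reasoning, or simply continuity of $\rho$, provides.)

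\medskip

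\textbf{Step (4): local flow of $\Lambda$ and constancy of the displacement distance.} I would use that $K=J\Lambda$ is a Killing field (Proposition \ref{Killing}). First, $\Lambda$ is a complete-enough vector field locally: for $x\in M_{\textrm{reg}}$ choose $\varepsilon>0$ and a relatively compact neighbourhood $U(x)\subseteq M_{\textrm{reg}}$ on which the flow $\phi_t$ of $\Lambda$ exists for $|t|<\varepsilon$ and stays in $M_{\textrm{reg}}$. Since $\Lambda=\textrm{gr}(\rho)$, the flow of $\Lambda$ moves level sets of $\rho$ to level sets of $\rho$, whence $\phi_t(U(x)\cap L(x))\subseteq L(\phi_t(x))$; one only has to check $\phi_t(x)$ is regular, which holds as $U(x)\subseteq M_{\textrm{reg}}$ and is flow-invariant. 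For the second bullet I would exploit the Killing field $K$: the one-parameter group of isometries generated by $K$ preserves $\rho$ (as $K\rho=g(J\Lambda,\Lambda)=0$) and hence acts on each leaf; combined with the fact from Proposition \ref{geodesics1}(1) that the $\Lambda$-line field is geodesic (integral curves of $\Lambda$, after reparametrisation, are geodesics), the map $y\mapsto \phi_t(y)$ restricted to the leaf is, up to the isometric $K$-action, the "same" transverse displacement at every point of the leaf. More precisely: any two points $y,y'$ of $U(x)\cap L(x)$ can be joined by a flow of $K$ (locally, using that $K$ spans a direction tangent to the leaf and that the leaf is a homogeneous-looking object for the combined $\Lambda,K$ dynamics — one may need to also use that the $0$- and $1$-eigendistributions are totally geodesic, Proposition \ref{geodesics1}(2)), and since the flow of $K$ commutes with the flow of $\Lambda$ (as $[\Lambda,K]=[\Lambda,J\Lambda]=0$, which follows from $\Lambda$ being holomorphic and $\nabla\Lambda$ being $J$-Hermitian, Proposition \ref{Killing}) and is isometric, $\textrm{dist}(y,\phi_t(y))=\textrm{dist}(y',\phi_t(y'))$.

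\medskip

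\textbf{Main obstacle.} The routine parts are (1) and (2). The subtle points are (3) and especially the \emph{second} bullet of (4). For (3) the care needed is purely in the connectedness bookkeeping near $N$. For (4), the real difficulty is establishing that the displacement distance $y\mapsto \textrm{dist}(y,\phi_t(y))$ is leaf-constant: this requires knowing that the leaf is "transitive enough" under the flows of $\Lambda$ and $K$ together with isometries, i.e. that moving within the leaf can be realised (locally) by the commuting flows of $\Lambda$ (which is geodesic) and $K$ (which is isometric), possibly after using the totally geodesic $0$- and $1$-eigendistributions of Proposition \ref{geodesics1}(2) to reduce to a product-like local model $\big(\text{surface}\big)\times\big(\text{fibre}\big)$ in which the claim is transparent. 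Pinning down this local normal form — essentially a warped-product description of $g$ on $M_{\textrm{reg}}$ adapted to $A$ — is where the bulk of the work will go; everything else is bookkeeping with the integrals $\tilde I_t$ and the Killing field $K$.
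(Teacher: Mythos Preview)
Your steps (1) and (2) match the paper and are fine. But (3) and (4) each contain a genuine gap.

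\textbf{Step (3).} You write ``$\rho(y)=c\in(0,1)$ by continuity, so $\Lambda(y)\neq0$'' and then dismiss the role of (2) as ``mostly conceptual''. This is wrong: we only know $M_0\cup M_1\subseteq N$, not equality, so $\rho(y)\in(0,1)$ does \emph{not} by itself exclude $\Lambda(y)=0$. The paper's argument uses (2) essentially: since $g(\Lambda,\Lambda)$ is constant on the leaf and equals its (nonzero) value at $x$, continuity gives $g(\Lambda(y),\Lambda(y))\neq0$, hence $y\in M_{\textrm{reg}}$. That is precisely the content of (2); it is not optional.

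\textbf{Step (4), first bullet.} The assertion ``since $\Lambda=\textrm{gr}(\rho)$, the flow of $\Lambda$ moves level sets of $\rho$ to level sets of $\rho$'' is false for a general gradient field (e.g.\ $\rho=x^2+2y^2$ on $\R^2$). One needs exactly the special structure here. The paper observes that the claim is equivalent to $[\Lambda,\xi]\in\Lambda^{\perp}$ for all $\xi\in\Lambda^{\perp}$, and deduces this from $\nabla_{\xi}\Lambda\in\Lambda^{\perp}$ (Lemma \ref{norm_lambda}) together with $\nabla_{\Lambda}\Lambda\propto\Lambda$ (proof of Lemma \ref{norm_lambda}) and torsion-freeness.

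\textbf{Step (4), second bullet.} Your Killing-field/normal-form route is both harder and incomplete: the leaf is $(2n-1)$-dimensional and $K$ alone cannot connect two arbitrary leaf points, and there is no reason the $0$- and $1$-eigendistributions should integrate to isometric motions. The paper's argument is much shorter and avoids all of this: since $\nabla_{\Lambda}\Lambda\propto\Lambda$, each integral curve $t\mapsto\phi_t(y)$ reparametrises to a geodesic; choosing $U(x)$ strictly convex, this geodesic segment is minimising, so $\textrm{dist}(y,\phi_{t_0}(y))=\int_0^{t_0}\sqrt{g(\Lambda(\phi_t(y)),\Lambda(\phi_t(y)))}\,dt$. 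Now by the first bullet $\phi_t(y)$ lies in the leaf $L(\phi_t(x))$ for every $y\in U(x)\cap L(x)$, and by (2) the integrand depends only on that leaf, i.e.\ only on $t$. Hence the integral is independent of $y$. No transitivity, no commuting flows, no warped-product model is needed; the key input is again (2), which you have underused throughout.
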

\begin{proof} 
\,
(1)
For any $x\in M_{\textrm{reg}}$ the tangent space $T_xL(x)$ is given by the kernel of $\nabla\rho$ at $x$, which 
equals the orthogonal complement of the gradient $\Lambda$ of $\rho$ at $x$. Hence, the statement follows 
from Proposition \ref{prop_eigenvalues}.

(2) Since $L(x)$ is by definition connected, the statement follows from (1) and Lemma \ref{norm_lambda}.

(3) Consider a sequence $(x_k)_{k\in\N}\in L(x)$ converging to a point $\tilde x\in M$. Clearly, we must have $\rho(\tilde x)=\rho(x)$, so it remains to check that $\tilde x$ is a regular point, i.e. $d\rho(\tilde x)\ne 0$. This however immediately follows from (2).

(4) Let $x\in M_{\textrm{reg}}$. Then there exists $\varepsilon>0$ and a neighbourhood $U(x)$ of $x$ inside 
$M_{\textrm{reg}}$ such that $\phi_t$ is defined on $U(x)$ for all $t\in(-\varepsilon, \varepsilon)$. We may also arrange that $U(x)\cap L(x)$ is connected. 
Since $\phi_t$ preserves $\Lambda$ and 
$U(x)\cap L(x)$ is connected, $\phi_t(U(x)\cap L(x))$ is a connected subset contained in $M_{\textrm{reg}}$ and  $\phi_t(U(x)\cap L(x))\subseteq L(\phi_t(x))$
is equivalent to $\rho\circ \phi_t$ being constant on $U(x)\cap L(x)$ for any fixed $t$.  This in turn is equivalent, over $U(x)\cap L(x)$, to 
$[\Lambda, \xi]\in\Lambda^{\perp}$ for any section $\xi\in\Lambda^{\perp}$. 
Since $\nabla_{\xi}\Lambda\in\Lambda^{\perp}$ for $\xi\in\Lambda^{\perp}$ by Lemma \ref{norm_lambda} and $\nabla$ is torsion-free, the latter is equivalent
to $\nabla_{\Lambda}\xi\in\Lambda^{\perp}$ for $\xi\in\Lambda^{\perp}$, which follows from differentiating $0=g(\Lambda, \xi)$ for 
a section $\xi\in\Lambda^{\perp}$ in direction $\Lambda$ and the fact that  $\nabla_{\Lambda}\Lambda$ is proportional to $\Lambda$. 

For the second property in (4), note that, by possibly shrinking $U(x)$ and $\varepsilon$, we can achieve that $U(x)$ is strictly convex 
and that
$$U(x)=\{\phi_t(y): y\in U(x)\cap L(x),\,\, t\in (-\varepsilon, \varepsilon)\},$$
which gives rise to flow-box coordinates on $U(x)$.

Since $\nabla_\Lambda\Lambda$ is proportional to $\Lambda$, for any $y\in U(x)\cap L(x)$ and $t_0\in(-\varepsilon, \varepsilon)$ the curve $\phi_t(y):[0, t_0]\rightarrow U(x)$ (respectively $[t_0, 0]$ if $t_0<0$) 
can be reparametrised to a geodesic segment and by strict convexity of $U(x)$ this geodesic segment is length minimising. The length of the velocity vector of the curve $\phi_t(y)$  is given by $\sqrt{g(\Lambda, \Lambda)}$ and by (2) it is a  function of $\rho$. Thus, the length of the curve $\phi_t(y)$ is   the same for all points  $y\in U(x)\cap L(x)$ and  for any fixed $t_0\in(-\varepsilon, \varepsilon)$ 
the function $y\mapsto\textrm{dist}(y, \phi_{t_0}(y))$ is constant on $U(x)\cap L(x)$.

\end{proof}

\begin{rem}  \label{vvv}  The statements (2) and (4) of Proposition \ref{foliation}  imply the existence of a coordinate system $y_1,...,y_{2n}$ in a neighbourhood of every regular point such that 
 $\rho=\rho(y_1)$ and  
 \begin{equation} g= dy_1^2 + \sum_{i,j=2}^{2n} h_{ij} dy_idy_j. \end{equation}  In this coordinate system  $\Lambda=(\rho'(y), 0,...,0)$ and its integral curves are ``vertical'' geodesics. 
\end{rem}

 \begin{prop} \label{prop:llast} Suppose $(M, J, g)$ is a connected complete K\"ahler manifold and $A\in \emph{Sol}(J,g)$ satisfies $\emph{(P)}$. 
  Let $\gamma:I\to M$ be an arc length parameterised geodesic with the following properties:
  \begin{itemize} 
  \item[(a)] the domain $I$ of $\gamma$ is either a closed interval $[a,b]$ ($a,b\in \mathbb{R}$), a ray $[a, +\infty) $ or $(-\infty, b]$, or all of $\mathbb{R}$,
  \item[(b)] $\Lambda(\gamma(s)):=\emph{gr}_{\gamma(s)}(\rho)\neq 0$ for all $s$ in the interior $I^o$ of $I$
  and $\Lambda(\gamma(s))=0$ for all $s\in I\setminus I^o$,
   \item[(c)] at some $s_0\in I^{0}$, hence by Proposition \ref{geodesics1} at all $s\in I^{o}$, 
    the velocity vector of $\gamma$ is proportional to $\Lambda$.
  \end{itemize} 
  Then the following statements hold: 
  \begin{enumerate} 
  \item  \begin{equation}\label{reg_points_foliation} 
  M_{\emph{reg}}=\bigcup_{s\in I^o} L(\gamma(s)),
  \end{equation}
  \item 
    the flow $\phi_t$ of $\Lambda$ acts simply transitively on the set of leaves,
  \item  there exist a connected ${2n-1}$ dimensional manifold $L$ and a diffeomorphism 
 $$\Psi: M_{\emph{reg}}\rightarrow L\times I^{o}$$
 with the following properties:
  
\begin{itemize} 
 \item the images of the leaves of the foliation under $\Psi$
   are the sets of the form  $L \times \{s\}$ for $s\in I^o$; in particular, the push-forward of the function $\rho$ depends only on $s\in I^o$, 
    \item the push-forward of the vector field $\Lambda$ is tangent to the lines $\{x\} \times I^o$, and  
    \item for  any  point  $x \in L$ and $s_1\le s_2 \in  I^{o}$   the distance between the points $\Psi^{-1} (x, s_1)$ and   $\Psi^{-1}(x, s_2)$ is  $s_2 -s_1$ and the 
   shortest arc-length parameterised geodesic connecting these points is the preimage $\Psi^{-1}(\tilde \gamma)$  
   of the ``vertical'' curve 
  $\tilde \gamma:[s_1, s_2  ] \to  L \times I^o\ , \, s\mapsto (x, s).$ \end{itemize} \end{enumerate}
\end{prop}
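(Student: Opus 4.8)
The idea is to reconstruct the global structure of $M_{\textrm{reg}}$ from the single ``vertical'' geodesic $\gamma$ together with the flow $\phi_t$ of $\Lambda$. By Proposition \ref{geodesics1}(1) and hypothesis (c), the whole geodesic $\gamma$ stays ``vertical'', i.e.\,$\gamma'(s)$ is proportional to $\Lambda(\gamma(s))$ at every interior point; since $\gamma$ is arc-length parameterised and $\Lambda=\textrm{gr}(\rho)$, we have $\tfrac{d}{ds}\rho(\gamma(s))=\pm|\Lambda|$, so (after possibly replacing $A$ by $\textrm{Id}-A$ and $s$ by $-s$) $\rho\circ\gamma$ is strictly monotone on $I^o$ and hits the critical values only at the endpoints of $I$. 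First I would show \eqref{reg_points_foliation}: the inclusion $\supseteq$ is clear, and for $\subseteq$ one uses that $\rho(M_{\textrm{reg}})$ is an open interval, that it is contained in the closure of $\rho(I^o)$ by connectedness and completeness (a vertical geodesic issuing from any regular point either reaches a critical level of $\rho$ or runs forever, and by completeness and the monotonicity it must, by the structure of $\gamma$, sweep out exactly the same range of $\rho$-values), and that every leaf is a level component of $\rho$; so each regular point lies on some $L(\gamma(s))$. Here Proposition \ref{foliation}(2)--(3) (the norm $g(\Lambda,\Lambda)$ is a function of $\rho$ alone and leaves are closed) is what makes the bookkeeping work.

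\textbf{The flow action.} Next I would establish (2). By Proposition \ref{foliation}(4) the flow $\phi_t$ of $\Lambda$ locally maps leaves to leaves, hence (by connectedness of $M_{\textrm{reg}}$ and a standard open-closed argument along the flow) it permutes the leaves globally wherever it is defined; moreover $\phi_t$ preserves $\Lambda$, so along an integral curve of $\Lambda$ the function $\rho$ changes exactly as it does along the reparameterised vertical geodesic, i.e.\,strictly monotonically, so no integral curve of $\Lambda$ returns to the same leaf for $t\neq0$ (this gives freeness). For transitivity: given two leaves $L(\gamma(s_1))$ and $L(\gamma(s_2))$, the integral curve of $\Lambda$ through any point of $L(\gamma(s_1))$ is, after reparameterisation, a vertical geodesic, and by completeness it is defined for all relevant ``$\rho$-time''; since $g(\Lambda,\Lambda)$ depends only on $\rho$, the $\Lambda$-time needed to go from $\rho$-value $\rho(\gamma(s_1))$ to $\rho(\gamma(s_2))$ is the same for every starting point, so a single $\phi_t$ carries the first leaf onto the second. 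This simultaneously shows that the leaf space is parameterised by $I^o$ via $s\mapsto L(\gamma(s))$ and that $\phi_t$ realises the translations.

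\textbf{The diffeomorphism.} For (3) I would set $L:=L(\gamma(s_0))$ and define $\Psi^{-1}:L\times I^o\to M_{\textrm{reg}}$ by $(x,s)\mapsto \phi_{\tau(s)}(x)$, where $\tau(s)$ is the $\Lambda$-time (a smooth increasing function of $s$, computed from $g(\Lambda,\Lambda)=g(\Lambda,\Lambda)(\rho)$) needed to move from the leaf $L(\gamma(s_0))$ to the leaf $L(\gamma(s))$; equivalently one reparameterises so that the second coordinate is arc length along the vertical geodesics, which is legitimate precisely because the speed $\sqrt{g(\Lambda,\Lambda)}$ is constant on each leaf. That $\Psi$ is a bijection is exactly (1) and (2); that it is a diffeomorphism follows from the flow-box coordinates of Proposition \ref{foliation}(4) and smoothness of $\tau$. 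The stated properties of $\Psi$ are then immediate: leaves go to slices $L\times\{s\}$ by construction, $\Lambda$ pushes forward to (a positive multiple of) $\partial_s$ hence is tangent to the lines $\{x\}\times I^o$, and $\rho$ pushes forward to a function of $s$ alone. The remaining, and I expect \textbf{main}, point is the distance statement: for $x\in L$ and $s_1\le s_2$, the vertical curve $\Psi^{-1}(\tilde\gamma)$ is a geodesic (reparameterised integral curve of $\Lambda$, by $\nabla_\Lambda\Lambda\parallel\Lambda$) of length $s_2-s_1$, and one must show it is globally length-minimising between its endpoints. Locally this is Proposition \ref{foliation}(4) (strict convexity of small flow boxes); to globalise I would argue that since $\rho$ is strictly monotone along any vertical geodesic and $d\rho$ has norm $\le 1$ everywhere with equality only on vertical directions, for \emph{any} curve $\sigma$ from $\Psi^{-1}(x,s_1)$ to $\Psi^{-1}(x,s_2)$ one has $\mathrm{length}(\sigma)\ge|\int_\sigma d\rho|/\sup|d\rho|\ge\ldots$ — more precisely, using that along the vertical geodesic $\rho$ increases at unit rate (after the arc-length reparameterisation) and that $\|\textrm{gr}(\rho)\|\le 1$ in the rescaled metric is \emph{not} automatic, so instead I would compare via the function $s$ itself: $s$ extends to a smooth function on a neighbourhood of the vertical geodesic with $\|\mathrm{gr}(s)\|\le 1$ and $\mathrm{gr}(s)$ proportional to $\Lambda$, whence $\mathrm{length}(\sigma)\ge s(\text{end})-s(\text{start})=s_2-s_1$ with equality forcing $\sigma$ to be a reparameterised vertical geodesic. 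Completeness of $M$ is used to guarantee that the vertical geodesic is defined on all of $[s_1,s_2]$ and that minimisers between the two points exist, pinning down uniqueness of the shortest arc.
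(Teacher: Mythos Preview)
Your argument for \eqref{reg_points_foliation} has a genuine gap: you conflate ``same $\rho$-value'' with ``same leaf''. When you write ``every leaf is a level component of $\rho$; so each regular point lies on some $L(\gamma(s))$'', you are using that the level sets of $\rho$ in $M_{\textrm{reg}}$ are \emph{connected}, but leaves are by definition only connected \emph{components} of these level sets, and a priori there could be several leaves at a given $\rho$-level not meeting $\gamma$. The same unproved assertion reappears when you write ``$g(\Lambda,\Lambda)$ is a function of $\rho$ alone'': Proposition~\ref{foliation}(2) gives constancy on each individual leaf, not equality across different leaves with the same $\rho$-value, and you invoke this stronger statement both in the bookkeeping for (1) and in the transitivity argument for (2). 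The paper closes this gap by an open--closed argument on the set $M'_{\textrm{reg}}:=\bigcup_{s\in I^o}L(\gamma(s))$ inside the connected space $M_{\textrm{reg}}$: openness comes from covering a path within a leaf by finitely many flow-boxes from Proposition~\ref{foliation}(4), and closedness uses Proposition~\ref{foliation}(2)--(3) together with hypothesis~(b) to show that a limit $\tilde s$ of parameters $s_k$ still lies in $I^o$. Once (1) is obtained this way, your assertion that $|\Lambda|$ depends only on $\rho$ does follow, and your outline for (2) and for the construction of $\Psi$ in (3) then agrees with the paper.

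For the distance statement in (3) your calibration idea (extend the arc-length parameter $s$ to a function on $M_{\textrm{reg}}$ with $|\textrm{gr}(s)|\equiv 1$ and integrate $ds$ along a competitor) is a genuinely different and pleasant route. The paper instead argues as follows: take the point $y$ of the closed leaf $L(\Psi^{-1}(x,s_1))$ nearest to $\Psi^{-1}(x,s_2)$ (which exists by completeness of $g$ and Proposition~\ref{foliation}(3)); the minimising geodesic to $y$ is orthogonal to the leaf at $y$, hence its initial velocity is proportional to $\Lambda$, hence by Proposition~\ref{geodesics1}(1) it is a vertical flow-line segment, forcing $y=\Psi^{-1}(x,s_1)$ and giving distance $s_2-s_1$. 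Your calibration yields the same conclusion, but note one loose end: the function $s$ is defined only on $M_{\textrm{reg}}$, so you must separately dispose of competitor curves that reach the zero set of $\Lambda$; this is easy (such a curve first traverses the full $s$-interval from $s_1$ to an endpoint of $I^o$, hence already has length $\ge s_2-s_1$), but should be said.
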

\begin{proof}
(1) Denote by $M'_{\textrm{reg}}\subseteq M_{\textrm{reg}}$ the right-hand side of \eqref{reg_points_foliation}. Since $M_{\textrm{reg}}$ is connected, it is sufficient to 
show that $M'_{\textrm{reg}}$ is open and closed in $M_{\textrm{reg}}$ in order to show that $M'_{\textrm{reg}}=M_{\textrm{reg}}$.

First, let us show that $M'_{\textrm{reg}}$ is open: Fix a point $x\in L(\gamma(s))$ with $s\in I^o$. Since $L(x)=L(\gamma(s))$ is connected, there exists a curve $\tilde c: [0,\frac{1}{2}]\rightarrow L(\gamma(s))$ connecting $\tilde c(0)=\gamma(s)$ with $\tilde c(\frac{1}{2})=x$. By (4) of Proposition \ref{foliation} and compactness of $[0, \frac{1}{2}]$, there exists $\varepsilon>0$ such that $\tilde c([0,\frac{1}{2}])$ can be covered by neighbourhoods (in $M_{\textrm{reg}}$) of the form
$$U(z)=\{\phi_t(z): y\in U(z)\cap L(z),\,\, t\in (-\varepsilon, \varepsilon)\}, \quad z\in\tilde c([0,\frac{1}{2}]),$$ satisfying the properties of (4) of Proposition \ref{foliation}. 
Let now $\phi_{t_0}(y)$ be any point in $U(x)$ (i.e. $|t_0|<\varepsilon$ and $y\in L(x)\cap U(x)$). Since $L(y)=L(x)=L(\gamma(s))$ is connected, we can extend $\tilde c$ to a curve $c: [0,1]\rightarrow L(\gamma(s))$ connecting $c(0)=\gamma(s)$ with $c(1)=y$. By construction, 
the curve $\phi_{t_0}\circ c$ connecting $\phi_{t_0}(\gamma(s))$ with $\phi_{t_0}(y)$ is well-defined and lies inside one leaf.
By (c), $\phi_{t_0}(\gamma(s))$ must lie on $\gamma(I^o)$, which shows that $\phi_{t_0}(y)\in M'_{\textrm{reg}}$. Hence, $M'_{\textrm{reg}}$ is open in $M_{\textrm{reg}}$. 

In order to show that $M'_{\textrm{reg}}$ is also closed in 
$M_{\textrm{reg}}$ suppose $(x_k)_{k\in\mathbb N}$ is a sequence in $M'_{\textrm{reg}}$ that converges to a point $\tilde x\in M_{\textrm{reg}}$. Denote by
$(s_k)_{k\in\mathbb N}\in I^o$ the corresponding sequence in $I^o$ such that $x_k \in L(\gamma(s_k))$. 
 Since $g(\Lambda(\gamma(s)),\gamma'(s))$ is nonzero for all $s\in I^0$, the function $s\mapsto\rho(\gamma(s))$ is strictly monotonic on $I$. Hence, $\rho(\gamma(s_k))= \rho(x_k)$ implies that
 $(s_k)_{k\in \mathbb N}$ converges and we denote its limit by $\tilde s \in  I$. Note moreover that Proposition \ref{foliation}(2)  implies that 
 $g(\Lambda(\gamma(s_k)),\Lambda(\gamma(s_k)))=g(\Lambda(x_k),\Lambda(x_k))$ for all $k\in\mathbb N$. Hence,
 $g(\Lambda(\gamma(\tilde s)),\Lambda(\gamma(\tilde s)))=g(\Lambda(\tilde x),\Lambda(\tilde x))\neq 0$, which implies $\tilde s\in I^o$ by the assumptions on $\gamma$. 
 Since $\Lambda(\gamma(s))$ is proportional to $\gamma'(s)$ at points $s\in I^o$, there exists a sequence $(t_k)_{k\in \mathbb N}$ 
 converging to $0$ such that $\phi_{t_k}(\gamma(s_k))= \gamma(\tilde s)$. Note that the sequence $(\tilde x_k)_{k\in\mathbb N}:= (\phi_{t_k}(x_k))_{k\in\mathbb N}\in L(\gamma(\tilde s))$ still converges 
 to $\tilde x$. Since the leaf  $L(\gamma(\tilde s))$ is  closed in $M$ by   Proposition \ref{foliation}(3), we must have
 $\tilde x \in L(\gamma(\tilde s))$. Hence, $M'_{\textrm{reg}}$ is closed in $M_{\textrm{reg}}$. Finally, $M'_{\textrm{reg}}=M_{\textrm{reg}}$. 
 
 (2) Since $\gamma(I^o)$ lies on a flow line of $\Lambda$ and $\Lambda$ is nowhere vanishing on $\gamma(I^o)$, the flow $\phi_t$ of $\Lambda$ acts 
 simply transitively on $\gamma (I^o)$ and hence on the set of leaves by (4) of Proposition \ref{foliation} and (1).
 
 (3) 
 Take $s_0\in I^o$ and set $L:=L(\gamma(s_0))$. For any point $x\in M_{\textrm{reg}}$ it follows from (1) that there exist $s\in I^{o}$ such that $x\in L(\gamma(s))$. 
 Since the function $s\mapsto\rho(\gamma(s))$ is strictly monotonic on $I^o$, this $s$ is unique. By (2) there exists a unique $t$ such that 
 $\phi_t(\gamma(s))=\gamma(s_0)\in L$ and set $\Psi(x):=(\phi_t(x), s)\in L\times I^o$.  Then $\Psi: M_{\textrm{reg}}\rightarrow L\times I^o$ is a diffeomorphism 
 as claimed. Indeed, the first two properties are satisfied by construction. 
 In order to show the last  property consider the point $\Psi^{-1}(x,s_2)$ and  the point $y$  of the leaf $L(\Psi^{-1}(x,s_1))$ which is the closest point of the leaf $L(\Psi^{-1}(x,s_1))$ to $\Psi^{-1}(x,s_2)$, which exists by completeness of $g$ and Proposition  \ref{foliation}(3). The minimising geodesic connecting $y$ to $\Psi^{-1}(x,s_2)$ is orthogonal to the leaf  $L(\Psi^{-1}(x,s_1))$. Hence, its velocity vector is proportional to $\Lambda$ at its initial point and so, by Proposition \ref{geodesics1}(1), the image of that geodesic must coincide with a segment of the flow line $\phi_t(y)$. In particular, $y=\Psi^{-1}(x,s_1)$.
 Arguing as at the end of the proof of Proposition \ref{foliation}(4), we obtain that the distance between $ \Psi^{-1}(x,s_1)$ and $\Psi^{-1}(x,s_2)$ equals the  length of $\gamma_{|[s_1,s_2]}$ which in turn is equal to  $s_2-s_1$. 
  
\end{proof}

Now we are ready to give the proof of Theorem \ref{M_compact}:

\begin{proof}[Proof of Theorem \ref{M_compact}]
Fix a point $x_0\in M_0$. It follows from the Hopf--Rinow Theorem that there exists a point $x_1\in M_1$ such that $\textrm{dis}(x_0, M_1)=\textrm{dist}(x_0,x_1)$.
Moreover, completeness of $(M,J, g)$ implies the existence of a minimising geodesic 
$\gamma:[0,1]\rightarrow M$ connecting these points, that is, $\gamma(0)= x_0$ and $\gamma(1)= x_1$. Without 
loss of generality we assume that there is no other point of $M_0$ on $\gamma(s)$ than $x_0$. Without loss of generality we may also assume that $\gamma$ is parameterised by arc-length, since 
otherwise we just multiply the metric $g$ by the appropriate constant to achieve that. 

By (3-4) of Proposition \ref{geodesics1} we know that at any point $s\in[0,1]$ such that $\Lambda(\gamma(s))\neq0$ 
the velocity vector $\gamma'(s)$ is proportional to $\Lambda(\gamma(s))$. We also know that $\Lambda(x_0)=\Lambda(x_1)=0$. 
Let us now show that  $\Lambda(\gamma(s))\neq0$ for all $s\in (0,1)$, which implies that $\gamma$ satisfies the assumptions of Proposition \ref{prop:llast}.
By contradiction, assume there exists $s_0\in(0,1)$ such that $\Lambda$ vanishes at $y:=\gamma(s_0)$. Then also the Killing vector field $K:=J\Lambda$ 
vanishes at $y$. We already remarked that the zero set $N$ of $K$ (being the zero set of a Killing vector field) is a union of connected (totally geodesics closed) 
submanifolds of codimension at least $2$ in $M$. By Lemma \ref{norm_lambda} the property of $K$ to be zero is preserved along the integral curves of the 
$2m + 2\tilde m$-dimensional distribution generated by the direct sum of the  eigenspaces of $A$ corresponding to the eigenvalues $1$ and $0$. 
Hence, $y\in N$ lies in a connected submanifold $N_{st}\subset N$ of $M$ of codimension precisely $2$. Now consider the action of the flow of $K$ on 
the tangent space $T_{y}M$. It acts as the identity on $ T_yN_{st} \subset T_yM$ and therefore by rotations on the $2$-dimensional orthogonal compliment 
of $T_yN_{st}$ in $T_yM$. Hence there exists an isometry generated by the flow that sends the vector $\gamma'(s_0)$ (at the point $y$) to the vector 
$-\gamma'(s_0)$. This in turn implies the existence of one more point of $M_0\cup M_1$ (different from $x_0,x_1$) on the geodesic segment 
$\gamma([0,1])$ (note that $\rho$ is invariant under the flow of $K$, hence the flow can not map $x_0$ to $x_1$ and vice versa), which contradicts our assumption. 
Hence, $\Lambda(\gamma(s))\neq0$ for all $s\in (0,1)$ and $\gamma$ satisfies the assumptions of Proposition \ref{prop:llast} as claimed.

Now, by Proposition \ref{prop:llast}, there exists a diffeomorphism $\Psi: M_{\textrm{reg}}\simeq L(\gamma(\frac{1}{2}))\times(0,1)$ such that
the geodesics tangent to $\Lambda$ correspond to the lines $\{x\} \times (0,1)$ inside $L(\gamma(\tfrac{1}{2}))\times(0,1).$
Then, any geodesic starting orthogonally from a point of $M_0$ reaches in distance $1$ a point of $M_1$. 
Hence, the image $\textrm{exp}(S_{x_0}M_0)$ of the compact set
$$
S_{x_0}M_0:=\{ \xi\in T_{x_0}M  \mid   g(\xi, \xi)= 1\ , \xi\in T^\perp_{x_0}M_0\} 
$$
under the exponential map is contained in $M_1$. In fact, $\exp(S_{x_0}M_0)$ coincides with $M_1$: Let $y_1$ be a point in $M_1$, then, by completeness of $M$, there exists a minimising geodesic $\tilde\gamma:[0,1]\rightarrow M$ connecting $\tilde\gamma(0)=x_0$ and $\tilde\gamma(1)=y_1$. By our assumptions, the length of $\tilde\gamma$ must equal the length of $\gamma$, which is $1$.   Hence, $\tilde\gamma'(0)\in S_{x_0}M_0$ by (4) of Proposition \ref{geodesics1}. Therefore, $\textrm{exp}(S_{x_0}M_0)=M_1$, which in particular implies that $M_1$ is compact.

Now consider the set  
$$
S M_1:=\{ (x,\xi) \in TM  \mid  x \in M_1, \    g(\xi, \xi)\le 1\ , \ \ \xi \in T^{\perp}_{x}M_1\}.  
$$
Since $M_1$ is compact,  $S M_1$ is compact as well. Arguing as above using Proposition \ref {prop:llast} , we conclude that its 
image $\textrm{exp}(S M_1)$ contains all regular points. Since the set of regular points is dense in $M$ and $S M_1$ is compact, we must have 
$\textrm{exp}(S M_1)=M$, which implies that $M$ is compact as claimed. 
\end{proof}

 \section{Proof of Theorem \ref{main_thm} }\label{Aut_group}
 Suppose $(M, J, g)$ is a connected complete K\"ahler manifold of real dimension $2n\geq 4$.
 As explained in Section \ref{structure_section}, it remains to prove Theorem \ref{main_thm} under the assumption that the degree of mobility of $(M,J,g)$ equals $2$. Recall that, by Theorem \ref{Weyl_fund_invariant}, any connected complete K\"ahler manifold with constant positive holomorphic sectional curvature is compact and isometric to $(\CP^n, J, c\textrm{g}_{FS})$, where $c>0$ is some positive constant. Since $(\CP^n, J, c\textrm{g}_{FS})$ has degree of mobility $(n+1)^2>2$ by Remark \ref{deg_on_CP}, we see that the assumption of degree of mobility equal to $2$ implies that $(M,J,g)$ is not of constant positive holomorphic sectional curvature.

 \subsection{The case of degree of mobility 2}
 \label{Mobility_2}
 Suppose $(M,J, g)$ is a K\"ahler manifold of real dimension $2n\geq 4$, denote its Levi-Civita connection by $\nabla$ and consider the induced c-projective manifold $(M,J,[\nabla])$. The c-projective invariance of 
 the metrisability equation \eqref{metrisability_equation} implies that for any c-projective transformation $\phi\in\textrm{CProj}(J,g)$ and any $\eta\in\mathcal S$ we have 
 $\phi^*\eta\in\mathcal S$.
 Moreover, the map
  \begin{align*}
T: \textrm{CProj}(J,g)\times \mathcal S &\rightarrow \mathcal S\\
 (\phi,\eta)\,\,\, \mapsto&\,\,(\phi^{-1})^*\eta
 \end{align*}
 evidentially defines a representation of the group $\textrm{CProj}(J,g)$ on the (finite-dimensional) vector space $\mathcal S$. We set $T_{\phi}:=T(\phi,_-)\in\textrm{GL}(\mathcal S,\R)$ for $\phi\in\textrm{CProj}(J,g)$.
 
 \begin{prop}\label{non-aff_c-proj_trans_mobility2}
 Suppose $(M,J,g)$ is a K\"ahler manifold of dimension $2n\geq4$ with degree of mobility $2$ (i.e.\,$\dim(\mathcal S)=2$). Let $\phi\in\emph{CProj}(J, g)\setminus\emph{Aff}(J, g)$ such that $\det (T_{\phi})>0$.  Then $T_{\phi}\in\emph{GL}(\mathcal S,\R)$ has two distinct positive real eigenvalues.
  \end{prop}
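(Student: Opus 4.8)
The plan is to study the action of $T_\phi$ on the two-dimensional solution space $\mathcal S$ and to rule out all algebraic possibilities for $T_\phi\in\mathrm{GL}(\mathcal S,\R)$ with positive determinant other than a diagonalisable map with two distinct positive eigenvalues. Since $\dim\mathcal S=2$ and $\det(T_\phi)>0$, the eigenvalues of $T_\phi$ (over $\C$) are either (i) a pair of non-real complex conjugates, (ii) a single real eigenvalue $\mu$ with $\mu^2=\det(T_\phi)>0$, either semisimple (so $T_\phi=\mu\,\mathrm{Id}$) or with a non-trivial Jordan block, or (iii) two distinct real eigenvalues of the same sign, which must then both be \emph{positive} if and only if the trace is positive; one has to exclude that they are both negative. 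So the proof splits into excluding cases (i), (ii), and the ``both negative'' sub-case of (iii).

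First I would observe that $\mathcal S$ contains the canonical solution $\eta_0:=g^{ab}\vol(g)^{\frac1{n+1}}$ coming from $g$ itself, which is positive-definite at every point, and that the non-degenerate elements of $\mathcal S$ correspond (Proposition after \eqref{eta}) to complete K\"ahler metrics c-projectively equivalent to $g$; moreover $\phi^*g$ again gives a positive-definite element $T_\phi^{\pm1}\eta_0\in\mathcal S$. The key point is a positivity/openness argument in the spirit of \cite{CEMN, Z}: the subset $\mathcal S_{+}\subseteq\mathcal S$ of solutions that are positive-definite at every point of $M$ is a nonempty open convex cone (openness because non-degeneracy is an open condition and completeness is preserved, convexity because a convex combination of positive-definite Hermitian forms is positive-definite), and $T_\phi$ preserves $\mathcal S_{+}$ since $\phi$ is a diffeomorphism of $M$ and pulls back complete positive-definite c-projectively equivalent metrics to the same. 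An invertible linear map of a $2$-dimensional real vector space that preserves a nonempty open convex cone which is not a half-plane cannot have non-real eigenvalues (that would force a rotational component mixing the cone with its complement) — this kills case (i). Similarly, if $T_\phi$ had two distinct real eigenvalues both negative, then $-T_\phi$ would have two distinct positive eigenvalues and $T_\phi$ would send the cone $\mathcal S_+$ into $-\mathcal S_+$ (the opposite, disjoint cone of negative-definite solutions), contradicting $T_\phi(\mathcal S_+)=\mathcal S_+$; this kills the bad sub-case of (iii). It remains to rule out case (ii): a single positive eigenvalue $\mu$. If $T_\phi=\mu\,\mathrm{Id}$, then $\phi^*g=\mu^{-1}g$ up to the density normalisation, i.e. $\phi^*g$ is a constant multiple of $g$, so $\phi$ is a homothety and hence (the pull-back connection being the Levi-Civita connection of $\phi^*g=cg$, which equals $\nabla$) affine — contradicting $\phi\notin\mathrm{Aff}(J,g)$. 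If $T_\phi$ has a single eigenvalue $\mu>0$ but is not semisimple, then after rescaling $T_\phi=\mu\,U$ with $U$ unipotent, $U\neq\mathrm{Id}$; here I would argue that a unipotent $U\neq\mathrm{Id}$ cannot preserve a proper open convex cone other than a half-plane either — the image of the cone under high powers $U^N$ degenerates onto the eigen-ray, so $U$ fixes a boundary ray of the cone and shears everything towards it, and one shows this is incompatible with $U(\mathcal S_+)=\mathcal S_+$ together with $\eta_0$ being in the interior (alternatively, a unipotent c-projective transformation acting non-trivially forces a one-parameter subgroup of non-affine transformations via $\log U$, contradicting Theorem \ref{deg_at_least_3}/Theorem \ref{thm:obata} since such a flow would force constant positive holomorphic sectional curvature, excluded by hypothesis).

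After these exclusions, the only surviving possibility is that $T_\phi$ is diagonalisable with two distinct real eigenvalues of positive trace and positive product, i.e. two distinct positive real eigenvalues, which is the assertion. The main obstacle I anticipate is making the cone-preservation arguments fully rigorous — in particular handling the \emph{unipotent} case cleanly, since there ``positivity'' alone is delicate: the shear fixes the boundary ray but one needs to know the cone $\mathcal S_+$ is genuinely two-sided (contains $\eta_0$ in its interior and the interior is all of $M$-positive-definite solutions) to derive the contradiction; if the purely linear-algebraic argument is not airtight, the fallback is to invoke the dynamical input of Theorem \ref{thm:obata}/Theorem \ref{deg_at_least_3} to exclude any one-parameter subgroup of non-affine c-projective transformations, which forces $T$ restricted to the closure $\overline{\langle\phi\rangle}$ to have finite connected component and rules out the unipotent (and in fact also the elliptic) alternatives directly. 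A secondary technical point is verifying that $\det(T_\phi)>0$ together with non-affineness genuinely excludes $\mu^2=\det(T_\phi)$ being realised only by $T_\phi=\pm\mu\,\mathrm{Id}$ — but this is exactly the homothety observation above, using that $\mathrm{CProj}(J,g)$ acting trivially (up to scale) on $\mathcal S$ means acting by affine transformations.
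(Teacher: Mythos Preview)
Your overall strategy --- study how $T_\phi$ acts on the cone $\mathcal S_+\subset\mathcal S$ of everywhere positive-definite solutions --- is the same as the paper's. However, the paper's execution is both shorter and avoids a gap in yours.

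The paper does not do a case split on the Jordan type of $T_\phi$. Instead it proves one geometric fact: the closure $\overline{\mathcal C_+}\cup\overline{\mathcal C_-}$ is \emph{not} all of $\mathcal S$, by exhibiting a linear combination of $\eta_0$ and $T_\phi\eta_0$ that is indefinite at some point. Together with the observation that $\eta_0$ and $T_\phi\eta_0$ are linearly independent (else $\phi$ is a homothety, hence affine), this shows $\mathcal C_+$ is a cone with nonempty interior whose boundary consists of two distinct rays. Since $T_\phi$ preserves $\mathcal C_+$ and has positive determinant, it fixes each boundary ray individually, giving two positive eigenvectors; since $T_\phi$ is not scalar, the eigenvalues are distinct. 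All three of your cases (i), (ii), and ``both negative'' are excluded in one stroke.

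The genuine gap in your argument is that you never establish that $\mathcal S_+$ is a \emph{proper} cone (not a half-plane). You invoke this hypothesis in ruling out complex eigenvalues and again in the unipotent case, but ``openness because non-degeneracy is an open condition'' does not give it: the issue is whether $\overline{\mathcal S_+}\cap(-\overline{\mathcal S_+})=\{0\}$, and that requires producing an element of $\mathcal S$ that is somewhere indefinite. Without this, a unipotent $T_\phi$ preserving a half-plane is perfectly possible, and your shear argument collapses. Your fallback for the unipotent case is also incorrect: $\log T_\phi$ lives in $\mathfrak{gl}(\mathcal S)$, not in the Lie algebra of c-projective vector fields on $M$, and the representation $T:\mathrm{CProj}(J,g)\to\mathrm{GL}(\mathcal S)$ has no reason to be surjective, so you cannot conjure a one-parameter family of non-affine c-projective transformations this way. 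Once you supply the indefinite-element step, the boundary-ray argument makes the case analysis (and the fallback) unnecessary.
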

  
 \begin{proof} Let $\phi\in\textrm{CProj}(J, g)\setminus\textrm{Aff}(J, g)$ such that $\det (T_{\phi})>0$. Denote by $\mathcal C_+$ the subset of elements in the $2$-dimensional vector space  $\mathcal S$ that are positive-definite at any point of $M$ and write $\eta\in\mathcal C_+$ for the element in $\mathcal S$ corresponding to $g$. Evidently, $\mathcal C_+$ forms a positive cone in $\mathcal S$, which is preserved by $T_{\phi}$. Note that our assumptions also imply that $T_{\phi}(\eta)$ and $\eta$ are linearly independent elements lying in $\mathcal C_+$ (in particular, $T_{\phi}$ is not a constant multiple of the identity), since otherwise $\phi$ would necessarily be a homothety of $g$ and hence affine. This moreover implies that the cone $\mathcal C_+$ has non-empty interior. Now set $\mathcal C:= \mathcal C_+\cup \mathcal C_-$, where $\mathcal C_-:=-\mathcal C_+$ denotes the cone of elements in $\mathcal S$ that are negative definite at any point of $M$. We claim that the closure $\overline{\mathcal C}=\overline{\mathcal C}_+\cup \overline{\mathcal C}_-$ of $\mathcal C$ does not coincide with $\mathcal S$, which together with the fact that $\mathcal C$ has non-empty interior implies that the boundary of $\mathcal C$ is the union of two distinct lines. Indeed, taking an appropriate linear combination of $\eta$ and $T_{\phi}(\eta)$, one can construct an element $\tilde{\eta}\in\mathcal S$ that at some point of $M$ is indefinite. Note that such an element $\tilde\eta$ can not be the limit of a sequence in $\mathcal C_+$ or $\mathcal C_-$, that is, $\tilde\eta$ can neither be in $\overline{\mathcal C}_+$ nor in $\overline{\mathcal C}_-$, which proves the claim. Since $T_{\phi}$ preserve the cone $\mathcal C_+$, it also preserves its boundary, which we have seen consists of two rays generated by two linearly independent elements in $\mathcal S$. The assumption $\det (T_{\phi})>0$ in addition implies that the two rays of the boundary of $\mathcal C_+$ are preserved individually, which shows that $T_{\phi}$ is diagonalisable with positive eigenvalues. Since, as already observed, $T_{\phi}$ is not a constant multiple of the identity, the claim follows.
\end{proof}

  \begin{lem}\label{Aff=Homotheties}
  Suppose $(M,J, g)$ is a K\"ahler manifold of dimension $2n\geq4$ with degree of mobility $2$ and denote by $\nabla$ the Levi-Civita connection of $g$. 
  If there exists $\tilde\eta\in\mathcal S$ which is not parallel for $\nabla$, then any complex affine transformation is a homothety of $g$. In particular, if there exists $\phi\in\emph{CProj}(J,g)\setminus \emph{Aff}(J,g)$, then any complex affine transformation is a homothety of $g$.
  \end{lem}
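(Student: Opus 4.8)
\textbf{Proof proposal for Lemma \ref{Aff=Homotheties}.}

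The plan is to argue by contradiction, exploiting the two-dimensionality of $\mathcal S$ together with the structure theory of complex affine transformations recalled in the introduction. Suppose $\tilde\eta\in\mathcal S$ is not parallel for $\nabla$ but there exists a complex affine transformation $\psi\in\textrm{Aff}(J,g)$ that is not a homothety. Translating into the endomorphism picture $\textrm{Sol}(J,g)$ via the metric $g$, the solution $\tilde\eta$ corresponds to a $(J,g)$-Hermitian endomorphism $\tilde A$ that is not $\nabla$-parallel, while the identity $\textrm{Id}$ corresponds to $\eta$ (the solution coming from $g$ itself). Since $\dim\mathcal S=2$, the pair $\{\textrm{Id},\tilde A\}$ is a basis of $\textrm{Sol}(J,g)$, and after subtracting a multiple of the identity we may assume $\tilde A$ is invertible on a dense open set, hence corresponds (up to rescaling) to a K\"ahler metric $\tilde g$ c-projectively equivalent but \emph{not} affinely equivalent to $g$.

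The key step is to understand how an affine transformation acts on $\mathcal S$. Since $\psi$ is affine it preserves $\nabla$, hence it preserves the space of $\nabla$-parallel $(J,g)$-Hermitian endomorphisms; in particular $T_\psi$ fixes $\eta$ up to the scaling coming from the homothety factor, and more importantly $T_\psi$ maps the parallel part of $\mathcal S$ to itself. First I would observe that, because $\psi$ is affine (not merely c-projective), the $1$-form $\Upsilon$ in \eqref{cproj_change} relating $\psi^*\nabla=\nabla$ to $\nabla$ vanishes, so the action of $T_\psi$ on $\textrm{Sol}(J,g)$ is simply $\tilde A\mapsto \psi^*\tilde A$ (pull-back of the endomorphism, up to the volume-density factor), and this again solves \eqref{mobility_eq}. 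Now use the de Rham / holonomy decomposition quoted after Theorem \ref{main_thm2}: on the universal cover, $g$ splits as a product of a flat factor and irreducible factors, $\psi$ permutes the irreducible factors and acts as a homothety or isometry on each. The non-parallel solution $\tilde A$, by Proposition \ref{Killing}, produces a nontrivial holomorphic Killing field $K=J\Lambda$ with $\Lambda=\textrm{gr}(\lambda)$, $\lambda=\tfrac12 \textrm{tr}\,\tilde A$ non-constant; the eigenvalue structure of $\tilde A$ together with its non-parallelism forces at least one irreducible factor to be non-flat and to carry this structure.

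The main obstacle, and the heart of the argument, is to rule out $\psi$ being a genuine non-isometric homothety-permutation: if $\psi$ rescaled a factor by $c\neq 1$ or permuted two isometric factors nontrivially, one would build from $\tilde A$ and $\psi$ a third, linearly independent solution of \eqref{mobility_eq} (for instance $\psi^*\tilde A$ together with $\textrm{Id}$ and $\tilde A$, or a suitable average), contradicting $\dim\mathcal S=2$. Concretely I would show that $T_\psi(\tilde\eta)$ is a linear combination $a\,\eta + b\,\tilde\eta$, compute $a,b$ in terms of the homothety factor of $\psi$ using that $\psi$ scales $\textrm{vol}(g)^{1/(n+1)}$ by a constant and acts on the eigenvalues of $\tilde A$ by affine reparametrisation, and then extract a contradiction from the non-constancy of $\lambda$ unless $\psi$ is an isometry. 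The final sentence of the lemma is then immediate: by Lemma \ref{Aff=Homotheties} (the part just proved) and Proposition \ref{non-aff_c-proj_trans_mobility2} (or simply because the existence of $\phi\in\textrm{CProj}(J,g)\setminus\textrm{Aff}(J,g)$ forces the solution $\phi^*\eta$ to be non-parallel, giving the required $\tilde\eta$), one concludes that in this situation every complex affine transformation is a homothety.
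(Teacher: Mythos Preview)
Your proposal contains the decisive observation but then walks past it. You correctly note that an affine $\psi$ preserves $\nabla$ and therefore sends the $\nabla$-parallel part of $\mathcal S$ to itself. Since $\eta$ is parallel and $\tilde\eta$ is not, the parallel part of the two-dimensional space $\mathcal S$ is exactly the line $\R\eta$; hence $\psi^*\eta=c\,\eta$ for some constant $c$. Unwinding the bijection between compatible metrics and non-degenerate solutions of \eqref{metrisability_equation}, this says precisely that $\psi^*g$ is a constant multiple of $g$, i.e.\ $\psi$ is a homothety. That is the paper's entire argument, in three lines. Your own sentence ``$T_\psi$ fixes $\eta$ up to the scaling coming from the homothety factor'' is already the conclusion; you should stop there rather than treating it as a preliminary remark.

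The detour through the de Rham decomposition has two problems. First, it presupposes completeness of $g$, which the lemma does not assume. Second, your proposed endgame---producing a third linearly independent solution from $\psi^*\tilde A$, $\tilde A$ and $\textrm{Id}$---cannot work as stated: since $\dim\mathcal S=2$, the element $\psi^*\tilde A$ \emph{always} lies in the span of $\textrm{Id}$ and $\tilde A$, so no contradiction arises this way without a substantially finer argument that you do not supply. The vaguer alternative (``compute $a,b$ \ldots and extract a contradiction from the non-constancy of $\lambda$'') is not carried out, and it is not clear how it would succeed without circling back to the parallel-subspace observation above. Your handling of the ``in particular'' clause is fine.
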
 
  \begin{proof}
  Let $\eta$ denote the element in $\mathcal S$ corresponding to $g$ and assume $\phi$ is an element of $\textrm{Aff}(J, g)$. 
  Since $\dim(\mathcal S)=2$ and $\{\eta,\tilde\eta\}$ is a basis of $\mathcal S$, we must have
 $\phi^*\eta=c\eta+d\tilde\eta$ for some constant $c,d\in\R$. Differentiating with respect to $\nabla$ yields
 \begin{equation*}
  0=\nabla \phi^*\eta=c\nabla\eta+d\nabla\tilde\eta=d\nabla\tilde\eta,
  \end{equation*}
  which implies $d=0$, since $\nabla\tilde\eta\neq0$ by assumption.
  \end{proof}
  
  \begin{prop}\label{crucial_prop}
  Suppose $(M, J,g)$ is a connected complete K\"ahler manifold of dimension $2n\geq4$ with degree of mobility $2$ and assume that
  $\emph{Aff}(J,g)\subsetneq \emph{CProj}(J,g)$. Then for any $\phi\in\emph{CProj}(J,g)\setminus \emph{Aff}(J,g)$ 
  we must have $\det(T_{\phi})<0$.
  In particular, we have 
  $$\phi,\psi\in\emph{CProj}(J,g)\setminus\emph{Aff}(J,g) \Longrightarrow
  \phi\circ\psi\in\emph{Aff}(J,g),$$ i.e. the index of $\emph{Aff}(J,g)$ in $\emph{CProj}(J,g)$ is two.
  \end{prop}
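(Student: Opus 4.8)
\emph{The plan.} The ``in particular'' is an easy consequence of the first assertion: since $T$ is a homomorphism, $\det(T_{\phi\circ\psi})=\det(T_\phi)\det(T_\psi)$, so once we know that every $\phi\in\textrm{CProj}(J,g)\setminus\textrm{Aff}(J,g)$ has $\det(T_\phi)<0$, the composite of two such elements acts with positive determinant and hence — again by the first assertion — must lie in $\textrm{Aff}(J,g)$; then $\textrm{CProj}(J,g)$ has at most two $\textrm{Aff}(J,g)$-cosets, and exactly two because $\textrm{Aff}(J,g)\subsetneq\textrm{CProj}(J,g)$ is proper. So I would concentrate entirely on ruling out the existence of $\phi\in\textrm{CProj}(J,g)\setminus\textrm{Aff}(J,g)$ with $\det(T_\phi)>0$.

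\emph{Setting up the contradiction.} Suppose such a $\phi$ exists ($\det(T_\phi)\neq 0$ automatically, as $T_\phi\in\textrm{GL}(\mathcal S,\R)$). By Proposition \ref{non-aff_c-proj_trans_mobility2} and its proof, $T_\phi$ is diagonalisable with two distinct positive eigenvalues, and the invariant cone $\mathcal C_+\subseteq\mathcal S$ of everywhere positive-definite solutions has nonempty interior and boundary equal to the union of two distinct rays, each preserved individually by $T_\phi$. Let $\eta_1,\eta_2$ generate these rays: being limits of positive-definite solutions they are positive-semi-definite everywhere, and being boundary points neither is positive-definite everywhere, i.e.\ each degenerates somewhere on $M$. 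Since the solution corresponding to $g$ lies in the interior of $\mathcal C_+$, after rescaling I may assume it equals $\eta_1+\eta_2$; passing to the associated $(J,g)$-Hermitian endomorphisms $A_1,A_2\in\textrm{Sol}(J,g)$ this reads $A_1+A_2=\textrm{Id}$, so the solution $A:=A_1$ satisfies $0\le A\le\textrm{Id}$ and both $A$ and $\textrm{Id}-A$ degenerate somewhere. Moreover $A$ is not $\nabla$-parallel: if it were, then, since $A\neq c\,\textrm{Id}$, all of the $2$-dimensional space $\mathcal S$ would consist of parallel solutions, whence for every $\psi\in\textrm{CProj}(J,g)$ the complete c-projectively equivalent metric $\psi^*g$ — which corresponds to an everywhere non-degenerate, hence parallel, element of $\mathcal S$ — would be affinely equivalent to $g$, forcing $\psi\in\textrm{Aff}(J,g)$ and contradicting $\textrm{Aff}(J,g)\subsetneq\textrm{CProj}(J,g)$.

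\emph{The main step.} The crux — and the part I expect to be the real obstacle — is to show that, after a suitable normalisation, $A$ satisfies property $(\textrm{P})$ of Section \ref{special_sol}, i.e.\ has one non-constant eigenvalue $\rho$ of multiplicity $2$ with values in $(0,1)$ together with the constant eigenvalues $0$ and $1$, and that in addition $M_0$ and $M_1$ are both nonempty (they contain the points where $A$, respectively $\textrm{Id}-A$, degenerate). I would carry this out by proceeding along the lines of the proof of the Yano--Obata conjecture in \cite[Theorem 7.6]{CEMN}: using that the degree of mobility $2$ bounds the number of distinct non-constant eigenvalue functions, Lemma \ref{Aff=Homotheties}, the structure of solutions of the mobility equation on complete K\"ahler manifolds (Proposition \ref{prop_eigenvalues}), and the fact that $\eta_1,\eta_2$ lie on the boundary of $\mathcal C_+$ to identify $0$ and $1$ as the constant eigenvalues and to exclude all more complicated eigenvalue configurations. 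In contrast to the projective situation \cite{M4,Z}, this does not come for free; it is the content of Lemma \ref{inequ_1}.

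\emph{Conclusion.} Once $A$ satisfies $(\textrm{P})$ with $M_0,M_1\neq\emptyset$, Theorem \ref{M_compact} applies and shows that $M$ is compact. It then only remains to contradict the standing hypotheses — completeness, degree of mobility $2$, and $\textrm{Aff}(J,g)\subsetneq\textrm{CProj}(J,g)$ — for this now-compact $M$ carrying the non-affine transformation $\phi$; I would obtain this from the compact-case form of the Yano--Obata theory \cite{FKMR,MR} together with Remark \ref{deg_on_CP}, since the compact connected K\"ahler manifolds singled out by that theory all have degree of mobility $(n+1)^2>2$. This final contradiction completes the proof.
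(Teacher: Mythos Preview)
Your reduction of the ``in particular'' clause to the first assertion is correct, and your setup (diagonalising $T_\phi$, normalising so that $A+\,(\mathrm{Id}-A)=\mathrm{Id}$ with $0\le A\le\mathrm{Id}$) agrees with the paper. There are, however, two genuine gaps in the remainder.

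First, your justification that $M_0$ and $M_1$ are nonempty --- ``they contain the points where $A$, respectively $\mathrm{Id}-A$, degenerate'' --- breaks down as soon as $\tilde m>0$ or $m>0$: in that case $A$ (resp.\ $\mathrm{Id}-A$) has a constant zero eigenvalue and hence degenerates \emph{everywhere}, so degeneracy tells you nothing about the non-constant eigenvalue $\rho$ reaching $0$ or $1$. The paper establishes $M_0,M_1\neq\emptyset$ by a separate dynamical argument (Lemma~\ref{M_i_non-empty}): one follows the iterates $\phi^{-k}(x_0)$ along an integral curve of $\Lambda$ and uses the asymptotics of $\rho(\phi^{-k}(x_0))$ together with the exponential decay of the distances $s_{k+1}-s_k$ coming from the precise identity $\alpha^{m+1}=\beta^{-(\tilde m+1)}$ of Lemma~\ref{inequ_1}.

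Second, and more seriously, your endgame does not work as stated. The compact Yano--Obata theorem of \cite{FKMR,MR} (equivalently Theorem~\ref{thm:obata}) only asserts $\mathrm{Aff}_0(J,g)=\mathrm{CProj}_0(J,g)$; it says nothing about a single discrete non-affine transformation $\phi$, and indeed the whole point of Theorem~\ref{main_thm} is that such $\phi$ \emph{can} exist even on compact manifolds (with $\det(T_\phi)<0$). So ``$M$ compact, degree of mobility $2$, $\phi$ non-affine'' is not by itself a contradiction. The paper instead uses compactness to extract a convergent subsequence of $(\phi^{-k}(x_0))$ and then re-runs the Weyl-curvature argument (the square-norm $F$ of $W$, expressed in the eigenframe as a sum of nonnegative terms with coefficients governed by the asymptotics of $G(k)$): under the identities of Lemma~\ref{inequ_1} every coefficient $C(ijpq;k)$ blows up either as $k\to+\infty$ or as $k\to-\infty$, forcing $W\equiv 0$ and hence c-projective flatness, which together with \cite[Remark~6.3 and Theorem~7.2]{CEMN} yields the contradiction (Lemma~\ref{identities_not_satisfied}). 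Note that this step, like Lemma~\ref{inequ_1} itself, uses the specific relation $\alpha^{m+1}=\beta^{-(\tilde m+1)}$ between the eigenvalues of $T_\phi$, not merely property~$(\mathrm{P})$; your outline of the ``main step'' should record that this numerical identity is part of its output.
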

 Note that Proposition  \ref{crucial_prop} shows that Theorem \ref{main_thm} holds under the assumption of degree of mobility $2$, which is what remains to be shown to establish Theorem \ref{main_thm}.

\subsection{Proof of Proposition \ref{crucial_prop}}\label{Section_Proof_Crucial_Prop}
Throughout this section we suppose that:
\begin{itemize}
\item $(M,J,g)$ is a connected complete K\"ahler manifold of dimension $2n\geq 4$ with Levi-Civita connection $\nabla$,
\item $\textrm{dim}(\mathcal S)=2$,
\item  there exists $\phi\in\textrm{CProj}(J,g)\setminus\textrm{Aff}(J,g)$ such that $\det (T_{\phi}) >0$.
\end{itemize}
Our goal is to show that these assumptions lead to a contradiction, which proves Proposition \ref{crucial_prop}.

By Proposition \ref{non-aff_c-proj_trans_mobility2}, we know that the linear isomorphism $T_{\phi}:\mathcal S \rightarrow\mathcal S$ 
has two distinct positive real eigenvalues, which we denote by $\alpha>\beta>0$. 
Suppose $\eta,\tilde\eta\in\mathcal S$ are eigenvectors of $T_{\phi}$ corresponding 
to $\alpha$ respectively $\beta$. Since $\phi$ is by assumption not affine, the element $g^{-1}\textrm{vol}(g)^{\frac{1}{n+1}}\in\mathcal S$ corresponding to the metric $g$ must be 
a linear combination $g^{-1}\textrm{vol}(g)^{\frac{1}{n+1}}=c\eta+d\tilde\eta$ with $c,d\neq 0$. Hence, by rescaling the eigenvectors $\eta$ and $\tilde\eta$ if necessary, 
we may assume that $$g^{-1}\textrm{vol}(g)^{\frac{1}{n+1}}=\eta+\tilde\eta.$$ We also set $D_{a}{}^{b}=\eta^{cb}g_{ac}\textrm{vol}(g)^{-\frac{1}{n+1}}$ and $\widetilde D_{a}{}^{b}=\tilde\eta^{cb}g_{ac}\textrm{vol}(g)^{-\frac{1}{n+1}}$.
\\\\
Note first that, since $(M,J, g)$ has degree of mobility $2$, the property of a point in $M$ to be regular with respect to an element $B\in\textrm{Sol}(J,g)$, as defined in Definition \ref{reg_points}, 
is invariant under c-projective transformations: If $B$ is a constant multiple of the identity, then any point is regular and so there is nothing to show. Assume now $B$ is not a constant multiple of $\textrm{Id}$ and $x\in M$ a regular point with respect to $B$. Then $x$ is also regular with respect to any other element in $\textrm{Sol}(J,g)$, since any element in $\textrm{Sol}(J,g)$ is a linear combination of $\textrm{Id}$ and $B$.
For any $\psi\in\textrm{CProj}(J,g)$ one has $\psi^*B\in\textrm{Sol}(J,\psi^*g)$, which implies that $x$ is regular with respect to the endomorphism $\psi^*B$, since $\textrm{Sol}(J,\psi^*g)\cong \textrm{Sol}(J,g)$.
Since the eigenvalues of $\psi^*B\in\textrm{Sol}(J,\psi^*g)$ at $x$ coincide with the eigenvalues of $B$ at $\psi(x)$, we deduce that $\psi(x)$ is also regular with respect to $B$.

Let us now fix a regular point $x_0\in M$ with respect to a (hence any) $B\in\textrm{Sol}(J,g)$ linearly independent to $\textrm{Id}$. Then there exists a neighbourhood $U$ of $x_0$ inside the set of regular points and, since $g$ is positive definite, a frame of $TU$, such that $g$ corresponds to the identity matrix and $D$ and $\tilde D$ to diagonal matrices $D=\textrm{diag}(d_1, d_1, ..., d_n, d_n)$ respectively
$\widetilde D=\textrm{diag}(\tilde d_1, \tilde d_1, ..., \tilde d_n, \tilde d_n)$, where $d_i, \tilde d_i$ are smooth real-valued functions on $U$ with $d_i+\tilde d_i=1$ for $i=1,...,n$.
This implies that in the local frame the tensor
$$A(k)_{a}{}^b
:=(\phi^{-k})^*(\ms^{bc}+\sms^{bc})g_{ac}\vol(g)^{-\frac{1}{n+1}}$$
corresponds to the following diagonal matrix:
\begin{equation}\label{A_k}
A(k)=\diag(\alpha^kd_1+\beta^{k}\tilde d_1, \alpha^kd_1+\beta^{k}\tilde d_1,..., \alpha^kd_n+\beta^{k}\tilde d_n, \alpha^kd_n+\beta^{k}\tilde d_n).
\end{equation}

Since $g$ and $(\phi^{-k})^*g$ are positive definite, all diagonal entries of
\eqref{A_k} are positive for all $k\in\Z$. Hence, $d_i+(\frac{\beta}{\alpha})^k\tilde d_i>0$
respectively $(\frac{\alpha}{\beta})^kd_i+\tilde d_i>0$ for all $k$ and taking the limit
$k\rightarrow\infty$ respectively $k\rightarrow -\infty$ shows that
$d_i,\tilde d_i\geq 0$ for all $i=1,\ldots,n$. Since $d_i+\tilde d_i=1$, we
conclude that
\[
0\leq d_i\leq 1\quad \text{ and }\quad 0\leq \tilde d_i\leq 1
\quad\quad \text{ for all } i=1,\ldots,n.
\]

Consider the pull-back $(\phi^{-k})^*(D)$, which 
corresponds in the frame over $U$ to a block diagonal matrix whose $i$-th block is given by
\begin{equation}\label{pull_back_D}
\frac{\alpha^{k}d_i}{\alpha^{k}d_i+\beta^{k}(1-d_i)}\textrm{Id}_2.
\end{equation}
Recall that the eigenvalues of $(\phi^{-k})^*(D)$ at $x_0$ are the same as the eigenvalues of $D$ at $\phi^{-k}(x_0)$. Since 
$\phi^{-k}(x_0)$ is again regular and the multiplicity of a constant eigenvalue of $D$ is constant on the set of regular points by Proposition \ref{prop_eigenvalues}, we therefore 
conclude that the only possible constant eigenvalues of $D$ on $U$ are $0$ and $1$. Since $d_i=0$ (respectively $d_i=1$) on $U$ implies 
$\tilde d_i=1$ (respectively $\tilde d_i=0$), we deduce that the only possible constant eigenvalues of $A(k)$ are $\alpha^k$ and $\beta^k$. Since $U$ 
consists of regular points the distinct eigenvalues of $A(1)$ on $U$ are smooth real-valued functions with constant algebraic multiplicity. We write 
$2m$ respectively $2\tilde m$ for the algebraic multiplicities of the constant eigenvalues $\alpha$ and $\beta$ on $U$. The number of distinct non-constant 
eigenvalues of $A(1)$ is then given by $n-m-\tilde m$. Recall also that the algebraic multiplicities $2m$ and $2\tilde m$ of the constant 
eigenvalues $\alpha$ and $\beta$ of $A(1)$ do not depend on the choice of regular point $x_0$ by Proposition  \ref{prop_eigenvalues}.

\begin{lem}\label{inequ_1}
At $x_0$ and hence at any regular point we must have
\begin{equation}\label{inequalities}
\alpha^{(n-\tilde m)} \leq \beta^{-(\tilde m+1)} \quad\text{and}\quad \alpha^{(m+1)} \geq \beta^{-(n-m)},
 \end{equation}
 which implies 
 \begin{equation}\label{identities}
n-m-\tilde m=1\quad\quad \alpha^{m+1}=\beta^{-(\tilde m+1)}.
\end{equation}
\end{lem}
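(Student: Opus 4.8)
The plan is to derive the two inequalities in \eqref{inequalities} from a completeness argument in the spirit of the proof of the Yano--Obata conjecture, \cite[Theorem 7.6]{CEMN}, and then to obtain \eqref{identities} by an elementary manipulation. Recall that $A(1)=\beta\,\textrm{Id}+(\alpha-\beta)D$, so $A(1)$ and $D$ have the same eigenspaces, the non-constant eigenvalues of $A(1)$ lie in the open interval $(\beta,\alpha)$ between the two constant ones $\alpha$ and $\beta$, and by Proposition \ref{prop_eigenvalues} the multiplicities $2m,2\tilde m$ and the number $n-m-\tilde m$ of non-constant eigenvalues do not depend on the chosen regular point; hence it suffices to argue at $x_0$. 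We shall also use the c-projectively equivalent complete metrics $g_k:=(\phi^{-k})^*g$, which correspond under the isomorphism $\mathcal S\cong\textrm{Sol}(J,g)$ to the solutions $A(k)=\alpha^kD+\beta^k\tilde D$ (see \eqref{A_k}), i.e.\ $g_k=|\det{}_\C(A(k))|\,g\,A(k)^{-1}$.

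First I would observe that $n-m-\tilde m\ge 1$. If $n-m-\tilde m=0$, then $A(1)$ has only constant eigenvalues, so $\lambda:=\tfrac{1}{2} A(1)_a{}^a$ is constant, the associated vector field $\Lambda=\textrm{gr}(\lambda)$ vanishes identically, and \eqref{mobility_eq} forces $A(1)$, hence $D$, hence every element of $\textrm{Sol}(J,g)\cong\mathcal S$, to be $\nabla$-parallel. Then $g_1$ is affinely equivalent to $g$, so $\phi^{-1}$ is an isometry from $(M,g_1)$ onto $(M,g)$ between K\"ahler metrics sharing the Levi-Civita connection $\nabla$; being a complex diffeomorphism, $\phi^{-1}$, and therefore $\phi$, then lies in $\textrm{Aff}(J,g)$, contradicting our standing assumption.

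The heart of the argument is an analysis of geodesics of $g$ by means of Topalov's quadratic integrals \eqref{Integrals} attached to $A(1)$, exactly as in the proof of \cite[Theorem 7.6]{CEMN}. One runs a suitable geodesic $\gamma$ of $g$ through $x_0$ --- tangent, at one of its points, to the gradient of a non-constant eigenvalue of $A(1)$; since the integrals are constant along $\gamma$ and the $\alpha$- and $\beta$-eigendistributions of $A(1)$ are orthogonal to $\Lambda$, one shows that a non-constant eigenvalue $\rho$ of $A(1)$ varies strictly monotonically along $\gamma$, and by the Hopf--Rinow theorem $\gamma$ extends to all of $\mathbb R$ (or to a ray), along which $\rho$ is driven to one of its boundary values $\alpha$ or $\beta$. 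Analysing the metrics $g$ and $g_k=|\det{}_\C(A(k))|\,g\,A(k)^{-1}$ near these boundary values, one finds that completeness of the $g_k$ (crucially in the limits $k\to+\infty$ and $k\to-\infty$) forces precisely the two inequalities in \eqref{inequalities}, the exponents $n-\tilde m$, $m+1$ (respectively $\tilde m+1$, $n-m$) recording the multiplicities $2m$, $2\tilde m$ together with the single non-constant eigenvalue active along $\gamma$. This asymptotic step --- controlling $\gamma$ and the family $g_k$ while $\rho$ degenerates into a constant eigenvalue, in the possible presence of several non-constant eigenvalues --- is where I expect the main difficulty to lie.

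Granting \eqref{inequalities}, the identities \eqref{identities} are immediate. Since $\alpha>\beta>0$, we have $\log\alpha-\log\beta>0$, and taking logarithms rewrites \eqref{inequalities} as $(n-\tilde m)\log\alpha+(\tilde m+1)\log\beta\le 0$ and $(m+1)\log\alpha+(n-m)\log\beta\ge 0$. Adding the first inequality to the second one rewritten as $-(m+1)\log\alpha-(n-m)\log\beta\le 0$ yields $\big((n-m-\tilde m)-1\big)(\log\alpha-\log\beta)\le 0$, hence $n-m-\tilde m\le 1$; combined with $n-m-\tilde m\ge 1$ this forces $n-m-\tilde m=1$, i.e.\ $n-\tilde m=m+1$ and $n-m=\tilde m+1$. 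Substituting back into \eqref{inequalities}, we obtain both $\alpha^{m+1}\le\beta^{-(\tilde m+1)}$ and $\alpha^{m+1}\ge\beta^{-(\tilde m+1)}$, that is $\alpha^{m+1}=\beta^{-(\tilde m+1)}$, as claimed.
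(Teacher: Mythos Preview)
Your final paragraph, deducing \eqref{identities} from \eqref{inequalities}, is correct and essentially what the paper does (the paper phrases it as ``dividing the first inequality by the second'', which amounts to your logarithmic subtraction). Your preliminary observation that $n-m-\tilde m\ge 1$ is also correct and is implicitly used in the paper.

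The gap is in your middle paragraph, which is the heart of the lemma. You sketch an approach---run a $g$-geodesic tangent to an eigenvalue gradient, invoke Topalov's integrals, push the non-constant eigenvalue to a boundary value, and then claim that ``completeness of the $g_k$\ldots forces precisely the two inequalities''---but you do not supply the mechanism, and you yourself flag this as ``where I expect the main difficulty to lie''. As written this is not a proof: you have not explained why completeness of \emph{all} the metrics $g_k$ near the degeneration of $\rho$ into $\alpha$ or $\beta$ yields the specific exponents $n-\tilde m$, $\tilde m+1$, $m+1$, $n-m$, nor why failure of the inequalities produces incompleteness of some $g_k$. This would require a genuine asymptotic analysis that you have not carried out.

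The paper's argument is quite different and does not use geodesics or Topalov's integrals at all. It proceeds by contradiction: if, say, $\alpha^{n-\tilde m}>\beta^{-(\tilde m+1)}$, then the explicit eigenvalue asymptotics of $G(k)=\det_{\R}(A(k))^{-1/2}A(k)^{-1}$ show that \emph{all} eigenvalues of $G(k)$ decay exponentially as $k\to+\infty$. Hence the $g$-distance between $\phi^{-k}(x_0)$ and $\phi^{-(k+1)}(x_0)$ decays exponentially, so the orbit $(\phi^{-k}(x_0))_{k\ge 0}$ is Cauchy and converges by completeness. One then evaluates the c-projectively invariant scalar $F=W_{ab}{}^c{}_dW_{ef}{}^s{}_t\,g_{cs}g^{ae}g^{bf}g^{dt}$ along this orbit; c-projective invariance of $W$ turns $F(\phi^{-k}(x_0))$ into a sum of nonnegative terms $C(ijpq;k)(W_{ij}{}^p{}_q(x_0))^2$ whose coefficients, read off from the same asymptotics, blow up whenever one of $i,j,q$ indexes a non-constant eigendirection. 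Convergence of $F$ then forces those components of $W$ to vanish at $x_0$, i.e.\ $(M,J,g)$ has \emph{nullity} at $x_0$. Theorem~7.2 of \cite{CEMN} (nullity on a dense set and non-constant positive holomorphic sectional curvature imply any complete c-projectively equivalent metric is affinely equivalent) then contradicts $\phi\notin\textrm{Aff}(J,g)$. The Weyl-curvature/nullity step is the idea you are missing.
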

In \cite{CEMN} the analogue statement (see Lemma 7.7 and the following considerations there) was proved for connected complete K\"ahler manifold $(M,J,g)$ of degree of mobility $2$
under the assumption of the existence of a flow of non-affine c-projective transformations. The proof persists however in our discrete setting. 
For completeness and later purposes we nevertheless give the proof here again. 

\begin{proof}
Note that $(\phi^{-k})^*g=g(G(k)\cdot,\cdot)$, where  $G(k):=\det_\R(A(k))^{-\frac{1}{2}} A(k)^{-1}$.
Without loss of generality we assume that the first $2\ell:=2n-2m-2\tilde m$ diagonal entries of $D$ are not constant (which is equivalent
to assuming that $d_i(x_0)\neq0,1$ for $i=1,\ldots,\ell$), the next $2m$
elements are equal to $1$, and the remaining $2\tilde m$ elements are zero on
$U$.  Then, we deduce from \eqref{A_k} that $G(k)$ on $U$ is a block diagonal
matrix of block sizes $2\ell\times 2\ell$, $2m\times 2m$ and $2\tilde m\times
2\tilde m$ respectively, where the three blocks are given by
\begin{equation}\label{B_k}
\Psi(t)\left(\begin{array}{ccc}
\frac{1}{d_1 \alpha^{k} + (1-d_1) \beta^{k}} \textrm{Id}_2&&\\
&\ddots&\\
&&\frac{1}{d_\ell \alpha^{k} + (1-d_\ell) \beta^{k}}\textrm{Id}_2
\end{array}\right),
\end{equation}
\,\\\\
\begin{equation*}
\Psi(t)\alpha^{-k}\textrm{Id}_{2m},\quad\quad\text{respectively}\quad\quad
\Psi(t) \beta^{-k} \textrm{Id}_{2\tilde m}, 
\end{equation*}
where
\[
\Psi(t):=
\alpha^{-km}\beta^{-\tilde m k}\prod_{i=1}^{\ell}\frac{1}{d_i \alpha^{k} + (1-d_i) \beta^{k}}.
\]
Write $\nu_1,\ldots,\nu_\ell$, $\nu$ and $\tilde\nu$ for the
eigenvalues of these respective diagonal matrices.  Note that their asymptotic
behaviour for $k\to +\infty $ respectively for $k \to -\infty$ is as follows
\begin{equation}\label{asymptotics_eigenvalues}
\begin{array}{cccc}
k\to+\infty & \nu_i(k)\sim \frac{\alpha^{-(n-\tilde m+1)k} \beta^{-\tilde m k} }{d_i\prod d_j}
& \nu(k) \sim \frac{\alpha^{-(n-\tilde m+1)k}\beta^{- \tilde m k} }{\prod d_j}
& \tilde \nu(k)\sim \frac{\alpha^{-(n-\tilde m)k}\beta^{-(\tilde m+1)k} }{\prod d_j}\\ &&&\\
k\to-\infty & \nu_i(k)\sim \frac{\alpha^{-mk}\beta^{-(n-m+1)k} }{(1-d_i)\prod (1-d_j)}
&   \nu(k) \sim \frac{\alpha^{-(m+1)k}\beta^{-(n-m)k} }{\prod (1-d_j)}
&  \tilde \nu(k)\sim \frac{\alpha^{-mk}\beta^{-(n-m+1)k}}{\prod (1- d_j)}.
\end{array}
\end{equation}

Assume now that \eqref{inequalities} is not satisfied. Without loss of generality we can assume the first inequality is not satisfied, that is, we assume that $\alpha^{(n-\tilde m)}> \beta^{-(\tilde m+1)}$, since otherwise we replace $\phi$ by $\phi^{-1}$, 
which exchanges the role of the inequalities in \eqref{inequalities}. 

Now consider the sequence $(\phi^{-k}(x_0))_{k\in\Z_\geq 0}$. Then $\alpha^{(n-\tilde m)}> \beta^{-(\tilde m+1)}$ implies that all eigenvalues of $G(k)$ decay exponentially as $k\to\infty$ by \eqref{asymptotics_eigenvalues}. 
Hence, we conclude that the distance between $\phi^{-k}(x_0)$ 
and $\phi^{-(k+1)}(x_0)$ also decays at least exponentially as $k\to\infty$. This shows that $(\phi^{-k}(x_0))_{k\in\Z_\geq 0}$ is a Cauchy sequence and hence completeness of $(M,J,g)$ implies that it converges. We denote 
the limit of $(\phi^{-k}(x_0))_{k\in\Z_\geq 0}$ by $\tilde x$.

Now let $F$ be the smooth
real-valued function on $M$ given by
\begin{equation*}
F= W_{ab}{}^c{}_d W_{ef}{}^s{}_t
g_{cs} g^{ae} g^{bf} g^{dt},
\end{equation*}
where $W_{ab}{}^c{}_d$ denotes the c-projective Weyl curvature of
$(M, J, [\nabla])$ defined as in \eqref{Weyl_curv}.  
Since $W_{ab}{}^c{}_d$ is
c-projectively invariant, $(\phi^{-k})^*F(x_0)=F(\phi^{-k}(x_0))$
equals
\begin{equation}\label{pull_back_F}
F(\phi^{-k}(x_0))=(W_{ab}{}^c{}_d W_{ef}{}^s{}_t)\,
(\phi^{-k})^*g_{cs}\, (\phi^{-k})^*g^{ae}\,(\phi^{-k})^*g^{bf}\,
(\phi^{-k})^*g^{dt})(x_0).
\end{equation}
Since $F$ is continuous, we have $\lim_{k\to\infty}F(\phi^{-k}(x_0))=F(\tilde x)$. 

In the frame we are working, the matrices corresponding to $g$ and $G(k)$
are diagonal and hence the function $F(\phi^{-k}(x_0))$ is a sum of the form
\begin{equation}\label{sum}
\sum_{1\leq i,j,p,q\leq 2n}C(ijpq;k)
\left( W_{ij}{}^{p}{}_{q}(x_0)  \right)^2, 
\end{equation} 
where the coefficient $C(ijpq;k)$ is the product of the $p$-th diagonal
entry and the reciprocals of the $i$-th, $j$-th and $q$-th diagonal entry
of the diagonal matrix that corresponds to $G(k)$ (and $W_{ij}{}^{p}{}_{q}(x_0)$ denotes the coefficients of $W$ with respect to the frame). The
coefficients $C(ijpq;k)$ depend on $k$ and their asymptotic behaviour for
$k\to\pm\infty$ can be read off from \eqref{asymptotics_eigenvalues}. Note
moreover that all coefficients $C(ijpq;k)$ are positive.

We claim that, if at least one of the indices $i,j$ or $q$ is less or equal
than $2n-2m-2\tilde m$, then
$W_{ij}{}^{p}{}_{q}(x_0)$ vanishes. Indeed,
from \eqref{asymptotics_eigenvalues} we conclude that $(\phi^{-k})^*g$ decays
exponentially at least as $\alpha^{-(n-\tilde m+1)}\beta^ {-\tilde m k}$, which is up
to a constant the smallest eigenvalue of $G(k)$, and that $(\phi^{-k})^*g^{-1}$ goes
exponentially to infinity at least as $\alpha^{(n-\tilde m)k} \beta^{(\tilde m+1)k}$
as $k\to \infty$. Suppose now that at least one of the indices $i, j$ or
$q$ is less or equal than $2n-2m-2\tilde m$. Then we deduce that up to
multiplication by a positive constant $C(ijpq;k)$ behaves asymptotically as
$k\to\infty$ at least as
\[
\alpha^{(n-\tilde m)k} \beta^{(\tilde m+1)k}\alpha^{(n-\tilde m)k} \beta^{(\tilde m+1)k}\alpha^{(n-\tilde m+1)k}\beta^{\tilde mk}\alpha^{-(n-\tilde m+1)k}\beta^{- \tilde mk}
=\alpha^{2(n-\tilde m)k} \beta^{2(\tilde m+1)k}.
\]
Since by assumption $\alpha^{(n-\tilde m)}> \beta^{-(\tilde m+1)}$, we therefore conclude
that the coefficient
\[
C(ijpq;k)\to\infty \quad\quad\text{ as } k\to\infty.
\]
Since all terms in the sum \eqref{sum} are nonnegative and the sequence
$F(\phi^{-k}(x_0))$ converges, we therefore deduce that
$W_{ij}{}^{p}{}_{q}(x_0)=0$ provided that at
least one of the indices $i,j$ or $q$ is less or equal than $2n-2m-2\tilde
m$. Hence, there exist a non-zero vector $V\in T_{x_0}M$ such that 

\[
W_{ab}{}^c{}_d V^a=0,\quad
W_{ab}{}^c{}_d V^b=0\quad\text{and}\quad
W_{ab}{}^c{}_d V^d=0, 
\]
which shows that $(3)$ of Remark 6.3 of \cite{CEMN} is
satisfied, which implies that $(M,J,g)$ has so-called nullity at $x_0$ and hence on the set of regular points, since $x_0$ was arbitrary.
Now Theorem 7.2 of \cite{CEMN} says that, if $(M,J, g)$ is a connected complete K\"ahler manifold with nullity on a dense open set and whose 
holomorphic sectional curvature is not a positive constant (which is implied by our assumption that the degree of mobility is $2$), then any complete K\"ahler metric that is c-projectively equivalent to $g$ is 
actually affinely equivalent to $g$, which contradicts our assumption that $\phi$ is not affine. Hence, the inequalities \eqref{inequalities} 
must be satisfied. Now dividing the first inequality by the second shows $\alpha^{n-\tilde m-m-1}\leq \beta^{n-m-\tilde m-1}$, which implies that $n-\tilde m-m=1$, 
since $\alpha>\beta>0$ by assumption. Hence, $n-\tilde m=m+1$ and inserting this back into the \eqref{inequalities} shows that $\alpha^{m+1}=\beta^{-(\tilde m+1)}$ as claimed.
\end{proof}

To prove Proposition \ref{crucial_prop} it remains to show that also \eqref{identities} leads to a contradiction. Here, we can not proceed as in \cite{CEMN}, where the analogue statement 
was ruled out under the assumption of the existence a flow of non-affine  c-projective transformations. Our strategy will be instead to show that under assumption \eqref{identities} the 
K\"ahler manifold $(M,J, g)$ is necessarily compact. Then a similar reasoning as in the proof of Lemma \ref{inequ_1} shows:

\begin{lem}\label{identities_not_satisfied} 
If $M$ is compact, then \eqref{identities} is not satisfied on the set of regular point of $M$.
\end{lem}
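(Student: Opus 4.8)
The plan is to mirror the strategy used in the proof of Lemma~\ref{inequ_1}, but now exploit compactness of $M$ to push the argument in the opposite direction. Suppose, for contradiction, that $M$ is compact and that \eqref{identities} holds, so that $n-m-\tilde m=1$ and $\alpha^{m+1}=\beta^{-(\tilde m+1)}$, with $\alpha>\beta>0$. In particular $\alpha^{m+1}\beta^{\tilde m+1}=1$. We again work in the local adapted frame over a neighbourhood $U$ of a regular point $x_0$, in which $g$ is the identity matrix and $G(k)=\det_{\R}(A(k))^{-\frac12}A(k)^{-1}$ is the block-diagonal matrix with blocks described in \eqref{B_k} and the lines following it, with eigenvalues $\nu_1(k),\dots,\nu_\ell(k),\nu(k),\tilde\nu(k)$ whose asymptotics as $k\to\pm\infty$ are listed in \eqref{asymptotics_eigenvalues}. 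As before, the scalar $F=W_{ab}{}^c{}_d W_{ef}{}^s{}_t g_{cs}g^{ae}g^{bf}g^{dt}$ is c-projectively invariant, so $F(\phi^{-k}(x_0))$ is the sum \eqref{sum} with positive coefficients $C(ijpq;k)$ that are products of one diagonal entry and three reciprocal diagonal entries of $G(k)$.

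First I would identify the dominant behaviour of the $C(ijpq;k)$. Using \eqref{identities} to eliminate one of $m,\tilde m$, the exponents appearing in \eqref{asymptotics_eigenvalues} simplify: writing $\alpha=e^a$, $\beta=e^{b}$ with $a>b$ and $(m+1)a=-(\tilde m+1)b>0$, the four reciprocal-eigenvalue growth rates and the four eigenvalue decay rates balance in such a way that, for indices lying in the constant-eigenvalue blocks, $C(ijpq;k)$ stays bounded, while for any index $\le 2n-2m-2\tilde m$ (i.e.\ in the non-constant block) one reads off, exactly as in the computation in Lemma~\ref{inequ_1}, that the corresponding coefficient tends to $0$ along \emph{one} of the two directions $k\to+\infty$ or $k\to-\infty$ and to $+\infty$ along the other. (In the proof of Lemma~\ref{inequ_1} one direction already forced $F(\phi^{-k}(x_0))\to\infty$ unless $W$ had the relevant vanishing; here, with the equality \eqref{identities} in force, the two directions are genuinely dual, and it is precisely the combination of both that we exploit.)

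Now compactness enters: since $M$ is compact, $F$ is a bounded continuous function on $M$, so the sequence $F(\phi^{-k}(x_0))$ is bounded for \emph{both} $k\to+\infty$ and $k\to-\infty$. Because all terms in \eqref{sum} are nonnegative, boundedness of the sum forces $W_{ij}{}^{p}{}_{q}(x_0)=0$ whenever the corresponding coefficient is unbounded in \emph{at least one} of the two directions --- and by the previous paragraph this is the case for \emph{every} index in the non-constant block. (In Lemma~\ref{inequ_1} only the first inequality in \eqref{inequalities} was assumed violated, giving vanishing from one direction; the equality case requires using both directions, which is exactly what compactness supplies for free.) Thus, as in Lemma~\ref{inequ_1}, there is a nonzero $V\in T_{x_0}M$ with $W_{ab}{}^c{}_d V^a=W_{ab}{}^c{}_d V^b=W_{ab}{}^c{}_d V^d=0$, so condition $(3)$ of Remark~6.3 of \cite{CEMN} holds; since $x_0$ was an arbitrary regular point, $(M,J,g)$ has nullity on the dense open set $M_{\textrm{reg}}$.

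The conclusion then follows as before: by Theorem~7.2 of \cite{CEMN}, a connected complete (here even compact) K\"ahler manifold with nullity on a dense open set and whose holomorphic sectional curvature is not a positive constant (which holds since the degree of mobility is $2$, by Remark~\ref{deg_on_CP} and Theorem~\ref{Weyl_fund_invariant}) admits no non-affine complete c-projectively equivalent metric; but $\phi^*g$ is such a metric and $\phi\notin\textrm{Aff}(J,g)$, a contradiction. Hence \eqref{identities} cannot hold when $M$ is compact. I expect the main obstacle to be the bookkeeping in the second paragraph: verifying, after substituting \eqref{identities} into \eqref{asymptotics_eigenvalues}, that for each quadruple of indices with at least one index in the non-constant block the coefficient $C(ijpq;k)$ is genuinely \emph{unbounded} in at least one of the two limit directions (rather than merely bounded), so that boundedness of $F$ really does force the vanishing of all the relevant Weyl components; this is the step where the equality $\alpha^{m+1}=\beta^{-(\tilde m+1)}$, as opposed to a strict inequality, must be used carefully.
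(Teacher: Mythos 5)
Your proposal is correct and follows essentially the same route as the paper: compactness makes $F$ bounded along the orbit, the asymptotics of the eigenvalues of $G(k)$ under \eqref{identities} force the relevant coefficients $C(ijpq;k)$ to be unbounded in at least one of the two directions $k\to\pm\infty$, which kills the corresponding Weyl components, yields nullity on the regular set, and produces the contradiction via Remark 6.3 and Theorem 7.2 of \cite{CEMN}. One non-load-bearing side claim is inaccurate: under \eqref{identities} \emph{every} coefficient $C(ijpq;k)$, including those with all four indices in the constant-eigenvalue blocks, is unbounded in at least one direction (e.g.\ four indices in the $\alpha$-block give $C\sim d_1^2(\alpha/\beta)^{2k}\to\infty$ as $k\to+\infty$), which is why the paper concludes the full vanishing of $W$ at $x_0$ rather than only nullity; your weaker conclusion still suffices for the final contradiction.
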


\begin{proof} Assume that the identities \eqref{identities} are satisfied and  fix a regular point $x_0\in M$.
As in the proof of Lemma \ref{inequ_1} consider the sequence  $(\phi^{-k})^*g=g(G(k)\cdot,\cdot)$. 
Note that the identities \eqref{identities} now imply that the asymptotic behaviour \eqref{asymptotics_eigenvalues} of the eigenvalues of $G(k)$ reads as follows: 
 \begin{equation} \label{asymptotics_eigenvalues2}
 \begin{array}{c c c c}
 k\to + \infty & \nu_1(k)\sim  \tfrac{1}{d_1^2} \left(\frac{\beta}{\alpha}\right)^k    & \nu(k) \sim 
 \tfrac{1}{d_1 } \left(\tfrac{\beta}{\alpha}\right)^k & \tilde \nu(k)\sim \tfrac{1}{d_1 }  \\ &&&\\
   k\to - \infty & \nu_1(k)\sim  \tfrac{1}{(1-d_1)^2} \left(\tfrac{\alpha}{\beta}\right)^k  &   \nu(k) \sim 
 \tfrac{1}{(1-d_1)}   &  \tilde\nu(k)\sim \tfrac{1}{(1-d_1)} \left(\tfrac{\alpha}{\beta}\right)^k .
 \end{array}
 \end{equation}
Now consider again $(\phi^{-k}(x_0))_{k\in\Z_\geq 0}$. Note that in contrast to the reasoning in the proof of Lemma \ref{inequ_1} the asymptotics \eqref{asymptotics_eigenvalues2} do not allow us to conclude that the sequence $(\phi^{-k}(x_0))_{k\in\Z_\geq 0}$ is a Cauchy sequence (in fact there also counter examples). Since $M$ is compact, there is however a subsequence $(x_{\ell})_{\ell\in\Z}:=(\phi^{-{k_\ell}}(x_0))_{\ell\in\Z}$ of $(\phi^{-k}(x_0))_{k\in\Z}$  that converges as $\ell\to\pm\infty$, where $k_{-\ell}=-k_\ell$. This implies that
$F(x_\ell)$, which is given as in \eqref{sum} by
\begin{equation}\label{sum2}
\sum_{1\leq i,j,p,q\leq 2n}C(ijpq;k_{\ell})
\left( W_{ij}{}^{p}{}_{q}(x_0)  \right)^2, 
\end{equation} 
converges as $\ell\to\pm\infty$. The coefficient $C(ijpq;k_{\ell})$ is the product of the $p$-th and the reciprocals of the 
$i$-th, $j$-th and $q$-th eigenvalues of $G(k_{\ell})$ and from \eqref{asymptotics_eigenvalues2} we deduce that
any such product either goes to $\infty$ or $-\infty$ as $\ell$ goes to $\infty$ respectively $-\infty$. Hence, 
$W_{ab}{}^c{}_d$ vanishes at $x_0$. Since $x_0$ was an arbitrary regular point, it vanishes on the open dense subset of regular points of $M$ 
and hence everywhere by connectedness of $M$. Hence, $(M,J, g)$ is c-projectively flat. Thus, the above mentioned Remark 6.3 and Theorem 7.2 of \cite{CEMN}
imply again that either $g$ has constant positive holomorphic sectional curvature or $\phi$ is affine, which contradict our assumptions.
 \end{proof}
 
Hence, if we can show that $M$ is compact, then Lemma \ref{inequ_1} and Lemma \ref{identities_not_satisfied} lead to a contradiction, which completes the proof of Proposition 
\ref{crucial_prop}.

In order to show that $M$ is compact, note that \eqref{identities} implies that $D$ locally around any regular point (with respect to some adapted local frame) is of the form

\begin{equation}
D= \left(\begin{array}{ccc}
 \rho \textrm{Id}_{2}&&\\
& \textrm{Id}_{2m}&\\
&& 0_{2\tilde m}
\end{array}\right),
\end{equation}
where $\rho(x)\in(0,1)$ for all $x\in M_{\textrm{reg}}$. Hence, $D$ satisfies property (P) as in Section \ref{special_sol} and $\rho$ extends to a smooth function on all of $M$ with values in $[0,1]$. 
As in Section \ref{special_sol} we set again $M_i:=\{x\in M: \rho(x)=i\}$ for $i=0,1$ and denote by $\Lambda$ the gradient of $\rho$ with respect to $g$.

\begin{lem} \label{M_i_non-empty}
If \eqref{identities} holds, then $$M_i:=\{x\in M: \rho(x)=i\}\neq \emptyset\quad\quad \textrm{ for } i=0,1,$$ where $\rho=\frac{1}{2}D_{a}{}^a-m$.
\end{lem}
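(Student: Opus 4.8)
The plan is to prove that, under the standing hypothesis \eqref{identities}, the continuous function $\rho\colon M\to[0,1]$ attains both of the values $0$ and $1$, which is exactly the assertion $M_0\neq\emptyset\neq M_1$. First I would show $\sup_M\rho=1$ and $\inf_M\rho=0$. Fix a regular point $x_0$. Each $\phi^{-k}(x_0)$ is again regular, and the eigenvalues of $(\phi^{-k})^*D$ at $x_0$ coincide with those of $D$ at $\phi^{-k}(x_0)$. Since under \eqref{identities} the eigenvalues $d_i$ of $D$ at $x_0$ lie in $\{0,1,\rho(x_0)\}$ and $n-m-\tilde m=1$, formula \eqref{pull_back_D} gives $\rho(\phi^{-k}(x_0))=\rho(x_0)\big(\rho(x_0)+(\beta/\alpha)^k(1-\rho(x_0))\big)^{-1}$; hence $\rho(\phi^{-k}(x_0))\to1$ and, running $\phi$ the other way, $\rho(\phi^{k}(x_0))\to0$ as $k\to\infty$. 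As $0\le\rho\le1$ on $M$, this forces $\sup_M\rho=1$ and $\inf_M\rho=0$.

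Next I would use the fibration description of $M_{\mathrm{reg}}$. Let $\gamma\colon\R\to M$ be the arc-length geodesic with $\gamma(0)=x_0$ and $\gamma'(0)=\Lambda(x_0)/|\Lambda(x_0)|$; it is defined on all of $\R$ by completeness, and by Proposition \ref{geodesics1}(1) its velocity stays proportional to $\Lambda$ on the maximal open interval $I^o=(a,b)\ni0$ on which $\Lambda\circ\gamma$ is nonzero, while $\Lambda\circ\gamma$ vanishes at every finite endpoint of $I^o$. Thus $\gamma$, restricted to $\overline{I^o}$, satisfies the hypotheses of Proposition \ref{prop:llast}; in particular $M_{\mathrm{reg}}=\bigcup_{s\in I^o}L(\gamma(s))$ and $\rho\circ\gamma$ is strictly monotone on $I^o$ (its derivative is $\pm|\Lambda\circ\gamma|\neq0$), so that $\rho(M_{\mathrm{reg}})$ is the open interval $(\rho_a,\rho_b)$ bounded by the one-sided limits of $\rho\circ\gamma$ at $a$ and $b$. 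Since $M_{\mathrm{reg}}$ is dense and $\rho$ continuous, $\rho(M)\subseteq[\rho_a,\rho_b]$, and combining with the previous step gives $\rho_a=0$ and $\rho_b=1$.

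It remains --- and this is the crux --- to show that $I^o$ is a bounded interval. Granting this, $\gamma(a)$ and $\gamma(b)$ exist in $M$ by completeness, $\Lambda$ vanishes at both, and $\rho(\gamma(a))=0$, $\rho(\gamma(b))=1$; hence $\gamma(a)\in M_0$ and $\gamma(b)\in M_1$, completing the proof. To prove $a,b$ finite, observe that along $\gamma$ one has $\tfrac{d}{ds}\rho(\gamma(s))=|\Lambda(\gamma(s))|$ and, by Proposition \ref{foliation}(2), $g(\Lambda,\Lambda)$ is the value of a function $h$ of $\rho$ alone, so that $b-a=\int_0^1 h(\rho)^{-1/2}\,d\rho$ and it suffices to show this integral converges at both ends. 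I would deduce this from the explicit description of $g(\Lambda,\Lambda)$ as a function of $\rho$ provided by the prolongation of the mobility equation (cf.\ \cite{DM, CEMN}): $h$ is the restriction of a polynomial in $\rho$, positive on $(0,1)$, and using the multiplicities $2m$, $2\tilde m$ together with the relation $\alpha^{m+1}=\beta^{-(\tilde m+1)}$ from \eqref{identities} one checks that this polynomial vanishes at $\rho=0$ and $\rho=1$ at most to first order; then the integral converges and the vertical geodesic $\gamma$ reaches the singular leaves $\{\rho=0\}$ and $\{\rho=1\}$ in finite parameter time.

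I expect the main difficulty to be precisely this last point: excluding a ``cusp-like'' degeneration of the fibration $M_{\mathrm{reg}}\cong L\times I^o$ as $\rho\to0$ or $\rho\to1$ (where $|\Lambda|\to0$). This is exactly the phenomenon responsible for the fact --- noted in the proof of Lemma \ref{identities_not_satisfied} --- that $\phi^{-k}(x_0)$ need not be a Cauchy sequence, which is why one cannot conclude by direct convergence as in the proof of Lemma \ref{inequ_1} but must instead route through the geometric results of Section \ref{special_sol}. The first two steps, by contrast, are essentially formal once Propositions \ref{geodesics1}, \ref{foliation} and \ref{prop:llast} are available.
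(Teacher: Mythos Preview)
Your first two steps --- computing $\rho(\phi^{-k}(x_0))$ from \eqref{pull_back_D}, deducing $\sup\rho=1$ and $\inf\rho=0$, and setting up the vertical geodesic $\gamma$ together with Proposition~\ref{prop:llast} --- are correct and match the paper's argument essentially verbatim.

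The gap is in your third step. You propose to show that $I^o=(a,b)$ is bounded by asserting that $h(\rho)=g(\Lambda,\Lambda)$ is (the restriction of) a polynomial in $\rho$ vanishing at most to first order at $0$ and $1$, so that $\int_0^1 h^{-1/2}\,d\rho<\infty$. This assertion is not justified by anything in the paper or the cited references. Property (P) only says that $D$ has the eigenvalue pattern $(\rho,1,0)$ with multiplicities $(2,2m,2\tilde m)$; the prolongation of the mobility equation in degree of mobility $2$ does \emph{not} produce a polynomial expression for $|\Lambda|^2$ as a function of $\rho$ --- in the Hamiltonian $2$-form description the profile function is an arbitrary smooth function of $\rho$. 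Moreover, the relation $\alpha^{m+1}=\beta^{-(\tilde m+1)}$ in \eqref{identities} constrains the action of $T_\phi$ on $\mathcal S$, not the intrinsic function $h(\rho)$, so invoking it to control the order of vanishing of $h$ at the endpoints is a non sequitur.

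What the paper does instead is to keep using $\phi$. Having found the strictly increasing sequence $(s_k)_{k\in\Z}\subset I^o$ with $\phi^{-k}(x_0)\in L(\gamma(s_k))$, one observes that $\phi^{\pm1}$ maps leaves to leaves, so the leaf distance $s_{k+1}-s_k$ (which by Proposition~\ref{prop:llast}(3) is realised by the vertical geodesic) is bounded above by the length of a fixed vertical segment measured in a pulled-back metric $(\phi^{k})^*g$. The eigenvalue of $G(-k)$ in the $\rho$-direction is $\nu_1(-k)$, and the asymptotics \eqref{asymptotics_eigenvalues2} show it decays like $(\beta/\alpha)^{|k|}$ in both directions $k\to\pm\infty$. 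Hence $s_{k+1}-s_k$ decays geometrically, $\sum|s_{k+1}-s_k|<\infty$, the limits $s_\pm=\lim_{k\to\pm\infty}s_k$ exist in $\R$, and $\gamma(s_+)\in M_1$, $\gamma(s_-)\in M_0$. This is precisely where the c-projective transformation $\phi$ (rather than the intrinsic structure of $D$) does the work, and it is the step your proposal is missing.
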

\begin{proof}
Let $\gamma: \R\rightarrow M$ be any arc-length parametrised geodesic such that $x_0:=\gamma(0)\in M_{\textrm{reg}}$ and $\gamma'(0)$ is proportional to $\Lambda(\gamma(0))$ with a positive coefficient.
Now let $I\subseteq \R$ with $0\in I$ such that $\gamma|_I$ satisfies the assumptions $(a-c)$ of Proposition \ref{prop:llast}. Since $\phi$ preserves the set of regular points, the point $\phi^{-1}(\gamma(0))$ 
is again a regular point. Hence, Proposition \ref{prop:llast} implies that there exists a unique $s_1\in I^o$ such that $\phi^{-1}(x_0)\in L(\gamma(s_1))$. Iterating this procedure, we obtain a sequence 
$(s_k)_{k\in\Z},$ where $s_0:=0$ and $s_k\in I^o$ is inductively defined by $\phi(\gamma(s_k))\in L(\gamma(s_{k+1}))$. By construction, we have $\rho(\gamma(s_k))=\rho(\phi^{-k}(x_0))$.
From \eqref{pull_back_D} we conclude that
\begin{equation}
\rho(\gamma(s_k))=\rho(\phi^{-k}(x_0))=\frac{\alpha^k\rho(x_0)}{\alpha^k\rho(x_0)+\beta^k(1-\rho(x_0))}.
\end{equation}
Hence, \eqref{identities} implies that $\rho(\gamma(s_k))$ is strictly increasing in $k$ 
and $\lim_{k\to\infty}\rho(\gamma(s_k))=1$ respectively $\lim_{k\to-\infty}\rho(\gamma(s_k))=0$.
By assumption we moreover have $g(\gamma'(s), \Lambda(\gamma(s)))>0$ on $I^o$, which implies that $s\mapsto \rho(\gamma(s))$ is 
strictly increasing on $I$. Hence, $(s_k)_{k\in\Z}\in I^o$ must be also strictly increasing. 
From Proposition \ref{prop:llast} and the asymptotics \eqref{asymptotics_eigenvalues2}, the length of the segment $\gamma_{[s_k, s_{k+1}]}$, which equals $s_{k+1}-s_k$, 
behaves asymptotically (up to a constant) as $\frac{\beta^k}{\alpha^k}$, which converges to $0$ for $k\to\infty$. Hence, the limits 
$s_{\pm}:=\lim_{k\to\pm\infty}s_k$ exist and $\rho(\gamma(s_+))=\lim_{k\to\infty}\rho(\gamma(s_k))=1$ respectively 
$\rho(\gamma(s_-))=\lim_{k\to-\infty}\rho(\gamma(s_k))=0$ imply $M_i\neq \emptyset$ for $i=0,1$.
\end{proof}

By Lemma \ref{M_i_non-empty} and Theorem \ref{M_compact}, we deduce that $M$ is compact. By Lemma \ref{inequ_1} this is however a contradiction.
Therefore, Proposition \ref{crucial_prop} holds, and, as a consequence, Theorem \ref{main_thm} is proved. 

\section{Proof of Theorem \ref{main_thm2}}\label{proof_thm2}
Suppose that $(M,J, g)$ is a complete connected K\"ahler manifold satisfying the assumptions of Theorem \ref{main_thm2}.
Since $g$ is not of constant positive holomorphic sectional curvature and $\textrm{Aff}(J,g)\subsetneq \textrm{CProj}(J,g)$, the degree of mobility of $(M,J, g)$ must be two by Theorem \ref{deg_at_least_3} and 
the index of $\textrm{Aff}(J,g)$ in $\textrm{CProj}(J,g)$ must be two by Theorem \ref{main_thm}. By Lemma \ref{Aff=Homotheties} moreover, the group $\textrm{Aff}(J,g)$ must equal the group of homotheties of $g$.
Hence, Theorem \ref{main_thm2} follows from the fact that on a locally non-flat connected complete Riemannian manifold the group of homotheties coincides with the isometry group of $g$ \cite[Lemma 2]{IO}.

 \end{document}